\newtheorem{theorem}{Theorem}[section]
\newtheorem{lemma}[theorem]{Lemma}
\newtheorem{corollary}[theorem]{Corollary}
\newtheorem{proposition}[theorem]{Proposition}
\newtheorem{conjecture}[theorem]{Conjecture}
\theoremstyle{definition}
\newtheorem{definition}[theorem]{Definition}
\theoremstyle{remark}
\newtheorem{remark}[theorem]{\bf Remark}
\numberwithin{equation}{section}
\newcommand{\C}{\mathbb C}
\newcommand{\Q}{\mathbb Q}
\newcommand{\G}{\mathbb G}
\newcommand{\Z}{\mathbb Z}
\begin{document}

\title[An algebraic Sato-Tate group and Sato-Tate conjecture]
{{An algebraic Sato-Tate group and Sato-Tate conjecture}}

\author[Grzegorz Banaszak]{Grzegorz Banaszak}
\address{Department of Mathematics and Computer Science, Adam Mickiewicz University,
Pozna\'{n} 61614, Poland}
\email{banaszak@amu.edu.pl}

\author[Kiran S. Kedlaya]{Kiran S. Kedlaya}
\address{Department of Mathematics, University of California, San Diego, La Jolla, CA 92093, USA}
\email{kedlaya@ucsd.edu}


\keywords{Mumford-Tate group, Algebraic Sato-Tate group}

\thanks{
Thanks to Jeff Achter for the reference to \cite{Ch}.
G. Banaszak was supported by the NCN (National Center for Science of Poland) 
grant NN 201 607 440 and UC San Diego during visit Dec. 2010 - June 2011. 
K.S. Kedlaya was supported 
by NSF CAREER grant DMS-0545904,
NSF grant DMS-1101343,
DARPA grant HR0011-09-1-0048, MIT (NEC Fund,
Cecil and Ida Green professorship), and UC San Diego
(Stefan E. Warschawski professorship). 
}
\begin{abstract}
We make explicit a construction of Serre giving a definition of an algebraic Sato-Tate group associated to an
abelian variety over a number field, which is conjecturally linked to the distribution of normalized $L$-factors 
as in the usual Sato-Tate conjecture for elliptic curves. 
The connected part of the algebraic Sato-Tate group is closely related to the Mumford-Tate group,
but the group of components carries additional arithmetic information. We then check that in many cases where the
Mumford-Tate group is completely determined by the endomorphisms of the abelian variety, the algebraic Sato-Tate
group can also be described explicitly in terms of endomorphisms. In particular, we cover all abelian varieties
(not necessarily absolutely simple)
of dimension at most 3; this result figures prominently in the analysis of
Sato-Tate groups for abelian surfaces given recently by Fit\'e, Kedlaya, Rotger, and Sutherland.
\end{abstract}

\maketitle


\section{Introduction}

Let $F$ be a number field with absolute Galois group $G_F$.
Let $A$ be an abelian variety of dimension $g \geq 1$ over $F$. 
Pick a prime number $l$ and consider the action of $G_F$ on the $l$-adic Tate module of $A$.
A theorem of Weil implies that for each prime ideal
$\mathfrak{p}$ of $F$ at which $A$ has good reduction, 
the characteristic polynomial $L(A/F, T)$ of any geometric Frobenius element of $G_F$ corresponding to $\mathfrak{p}$ 
is a monic polynomial
with integer coefficients, whose roots in the complex numbers all have absolute value $q^{1/2}$ for
$q$ the absolute norm of $\mathfrak{p}$.

One can ask how the renormalized characteristic polynomials $\overline{L}(A/F, T) = q^{-g} L(A/F, q^{1/2} T)$
are distributed in the limit as $q \to \infty$. For $A$ an elliptic curve without complex multiplication,
the \emph{Sato-Tate conjecture} predicts equidistribution with respect to the image of the Haar measure on
$\mathrm{SU}(2)$; this result is now known when $F$ is a totally real field \cite{BLGG11}, building upon
recent advances in the modularity of Galois representations led by R. Taylor. 
For $A$ an elliptic curve
with complex multiplication, the situation is easier to analyze: for all $F$, one can prove equidistribution
for the image of the Haar measure on a certain subgroup of $\mathrm{SU}(2)$. This group may be taken to be
$\mathrm{SO}(2)$ when $F$ contains the field of complex multiplication; otherwise, one must instead take the normalizer
of $\mathrm{SO}(2)$ in $\mathrm{SU}(2)$

With this behavior in mind, it is 
reasonable to formulate an analogous \emph{Sato-Tate conjecture} for general $A$, in which the equidistribution
is with respect to the image of the Haar measure on a suitable compact Lie group. Such a conjecture seems to have been
first formulated by Serre \cite{Se94} in the language of motives. More recently, Serre has given an alternate formulation in terms of $l$-adic Galois representations \cite[\S 8]{Se11}.

The purpose of this paper is to make explicit one aspect of Serre's $l$-adic construction of the putative \emph{Sato-Tate group},
namely the role of an algebraic group closely related to (but distinct from) the classical \emph{Mumford-Tate group}
associated to $A$. This \emph{algebraic Sato-Tate group} has the property that its connected part is conjecturally
determined by the Mumford-Tate group; however, whereas the Mumford-Tate group is by construction connected, the
algebraic Sato-Tate group has a component group with Galois-theoretic meaning.
For example, in those cases where the Mumford-Tate group is determined entirely by the endomorphisms of $A$
(e.g., the cases studied by the first author jointly with W. Gajda and P. Kraso{\' n} in \cite{BGK1, BGK2}), we can determine the algebraic Sato-Tate group
and interpret its component group as the Galois group of the minimal extension of $F$ over which all geometric
endomorphisms are defined.

One key motivation for this work is to establish the properties of the algebraic Sato-Tate group
for all abelian varieties of dimension at most 3. (Note that includes only cases where the Mumford-Tate group
is determined by endomorphisms, as the first counterexamples are Mumford's famous examples in dimension 4.)
This result is applied by the second author in \cite{FKRS} in order to classify the possible Sato-Tate groups
arising in dimension 2, and makes the corresponding classification in dimension 3 feasible as well.

In cases where the Mumford-Tate group is not determined by endomorphisms, one can still exhibit
a natural candidate for the algebraic Sato-Tate group by following Serre's motivic construction
\cite{Se94}. Based on computations in \S 7 and \S 8, using the Tannakian 
formalism, we define in \S 9 the algebraic Sato-Tate group $AST_K(A)$ assuming that 
the Mumford-Tate group is the connected component of the motivic Mumford-Tate group 
(a hypothesis put forward by Serre in \cite[sec.\ 3.4]{Se94}). Then in \S 9 we prove 
(see Theorem \ref{AST as expected extension of HA}) 
that the group $AST_K(A)$ satisfies the conditions (\ref{ASTExpectedProperties1})--(\ref{ASTExpectedProperties5})
of Remark (\ref{expected properties of the alg Sato-Tate group}) which one naturally expects to hold for the 
algebraic Sato-Tate group. If a suitable motivic version of the Mumford-Tate conjecture holds, 
then we prove that the algebraic Sato-Tate conjecture also holds. We also show that the $AST_K(A)$ group 
defined in \S 9 agrees with the algebraic Sato-Tate group considered in sections \S 4 - \S 6, in particular 
in all cases where the Mumford-Tate group is determined by endomorphisms.


\section{The algebraic Sato-Tate group and conjecture}

We begin with the definition of the algebraic Sato-Tate group and the formulation of an associated conjecture refining
the standard Mumford-Tate conjecture.

\begin{definition}
Let $A/F$ be an abelian variety over a number field $F.$ Choose an 
embedding $F \subset \C$, then put $V := V(A) := H_1(A(\C), \Q)$ and 
$V^{\ast} := {\rm Hom}_{\Q}(V, \, \Q).$ 
Put $D := D(A) := {{\rm End}} (A)^{0}:= {{\rm End}}_{{\overline F}} (A) \otimes \Q$.

We obtain a rational Hodge structure of weight 1 on $V^{\ast}$ from the
Hodge decomposition
$$V^{\ast}_{\C} := V^{\ast} \otimes_{\Q} {\C}=H^{1,0}\oplus H^{0,1},$$
where \,\,$H^{p,q} = H^p(A;\, \Omega_{A/\C}^q)$ and ${\overline{H^{p,q}}}= H^{q,p}.$ Observe that
$H^{p,q}$ are invariant subspaces with respect to the action of $D$ on $V^{\ast} \otimes_{\Q} \C$, so the $H^{p,q}$ are right $D$-modules. 
Put $H^{-p, -q} := {\rm{Hom}}_{\C}(H^{p,q}, \, \C).$ The left action of $D$ on
$H^{-p, -q}$ is defined  via the right action of $D$ on $H^{p,q}.$
Let
$$\psi = \psi_{V} :\, V \times V\, \rightarrow \Q$$
be the $\Q$-bilinear,  nondegenerate, alternating form coming from the Riemann form of $A.$ 
\end{definition}

\begin{definition}
Put $T_l := T_l (A)$ and $V_l := V_l (A) := T_l (A) \otimes_{\Z_l} \Q_l.$ 
It is well known 
\cite[\S 24, p. 237]{M} (cf.\
\cite[\S 16, p. 133]{Mi}) that $V_l \cong V \otimes_{\Q} \Q_l.$ 
The Galois group $G_F$ acts naturally on $V_l$ and
because of the Weil pairing $\psi_{l} \, :\, V_l \times V_l \rightarrow \Q_l$
we have the $l$-adic representation
\begin{equation}
\rho_l \, :\, G_F \rightarrow GSp (V_{l})
\end{equation}
induced by the natural representation $\rho_l \, :\, G_F \rightarrow GSp (T_{l}).$

Let $K/F$ be a finite extension.
Let $G_{l, K}^{alg} \subseteq  GSp_{(V_{l}, \psi_{l})}$ 
be the Zariski closure of $\rho_{l} (G_K)$ in $GSp_{(V_{l}, \psi_{l})}.$
Define 
$$\rho_{l} (G_K)_{1} := \rho_{l} (G_K) \cap Sp_{(V_{l}, \psi_{l})}.$$
Let $G_{l, K, 1}^{alg} \subseteq  Sp_{(V_{l}, \psi_{l})}$
be the Zariski closure of $\rho_{l} (G_K)_{1}$ in 
$Sp_{(V_{l}, \psi_{l})}.$
The algebraic group $G_{l, K}^{alg}$ is reductive by \cite[Theorem 3]{F}.
If $K = F$, we will put $G_{l}^{alg} := G_{l, F}^{alg}$ and
$G_{l, 1}^{alg} := G_{l, F, 1}^{alg}.$ 
\end{definition}

\noindent
On the basis of the work of Mumford, Serre,
and Tate, it has been conjectured for about 50 years (see Conjecture~\ref{Mumford-Tate})
that there is a connected reductive group scheme 
$G \subseteq GSp_{(V,\psi)}$ such that for all $l \gg 0$,
\begin{equation}
G_{\Q_l} = (G_{l, K}^{alg})^{\circ},
\label{previous to Algebraic Sato-Tate equality}
\end{equation}
\noindent
where $(G_{l,K}^{alg})^{\circ}$ denotes the connected component of the
identity.
This implies that $G$ does not depend on $K$ (see \S\ref{section:Zariski closure}).
We propose the following refinement of this conjecture which does detect the field $K$.
\begin{conjecture}\label{algebraic Sato Tate conj.} (Algebraic Sato-Tate conjecture)
There is an algebraic group $AST_K (A) \subseteq Sp_{(V, \psi)}$ over $\Q$ such that  $AST_K (A)^{\circ}$ is reductive and 
for each $l$,
\begin{equation}
AST_K(A)_{\Q_l} = G_{l, K, 1}^{alg}.
\label{Algebraic Sato-Tate equality}
\end{equation}
\end{conjecture}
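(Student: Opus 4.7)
\noindent\textit{Proof strategy.} The plan is to build $AST_K(A)$ by treating its identity component and its group of components separately. For the identity component, the natural candidate is the Hodge group $\mathrm{Hg}(A) := MT(A) \cap Sp_{(V,\psi)}$, i.e., the kernel of the similitude character restricted to the Mumford-Tate group; this is known to be reductive, using the polarization $\psi$ and the semisimplicity of the Hodge representation. Granted the classical Mumford-Tate conjecture in the form (\ref{previous to Algebraic Sato-Tate equality}), base-changing $\mathrm{Hg}(A)$ to $\Q_l$ should be identified with $(G_{l,K,1}^{alg})^\circ$ once one reconciles the passage from $GSp$ to $Sp$ with taking identity components (a bookkeeping exercise using the fact that the similitude character of $\rho_l$ is the cyclotomic character).

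For the component group, I would locate $AST_K(A)$ inside the normalizer $N := N_{Sp_{(V,\psi)}}(\mathrm{Hg}(A))$, which contains $\mathrm{Hg}(A)$ as an open subgroup, and define $AST_K(A)$ as the preimage in $N$ of a specific finite subgroup of $N/\mathrm{Hg}(A)$ cut out by the arithmetic of $K$. In the endomorphism-controlled regime, this finite subgroup is naturally the image of $\mathrm{Gal}(L/K)$, where $L/K$ is the smallest extension over which every element of $D = \mathrm{End}_{\overline F}(A)^0$ is defined; the map $\mathrm{Gal}(L/K) \to N/\mathrm{Hg}(A)$ records how $\mathrm{Gal}(L/K)$ permutes idempotents and polarized isotypic summands of $V$. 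In the general case, one instead follows Serre's Tannakian/motivic recipe \cite{Se94} as realized in \S 7--9. The target equality (\ref{Algebraic Sato-Tate equality}) then reduces to matching two finite groups: the component group of $G_{l,K,1}^{alg}$ must be identified, via $\rho_l$, with the chosen Galois quotient of $G_K$.

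The main obstacle, unsurprisingly, is the Mumford-Tate conjecture itself, since without it $(G_{l,K,1}^{alg})^\circ$ cannot a priori be identified with the base change of a fixed $\Q$-group, let alone with $\mathrm{Hg}(A)$. A secondary but nontrivial obstacle is the $l$-independence of the component group $G_{l,K,1}^{alg}/(G_{l,K,1}^{alg})^\circ$: one must show that the permutation action of $G_K$ on the geometric components factors through a finite Galois quotient independent of $l$. This is expected because the underlying arithmetic data controlling those components (endomorphisms, Tate classes) is itself $l$-independent, but a direct proof seems to require either the motivic formalism or careful endomorphism bookkeeping. In the regime where endomorphisms determine $MT(A)$ — notably all $g \leq 3$ — both obstacles are resolved by combining \cite{BGK1,BGK2} with the explicit endomorphism-based construction of the component group to be developed in \S 4--6; in the general case, the equality becomes conditional on a suitable motivic Mumford-Tate-type hypothesis.
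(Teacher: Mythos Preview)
The statement is a conjecture, so the paper supplies no proof; it instead proposes candidates for $AST_K(A)$ and verifies the conjecture in families of cases. Your outline matches that program in broad strokes: take the Hodge group $H(A)$ as the identity component (contingent on Mumford--Tate), assemble the component group from the Galois action on endomorphisms or motivically, and flag exactly the two obstacles you name.

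The substantive difference is how the disconnected part is packaged. You place $AST_K(A)$ inside the normalizer $N = N_{Sp_{(V,\psi)}}(H(A))$ and try to single out a finite subgroup of $N/H(A)$ as the image of a map from $\mathrm{Gal}(L/K)$. The paper instead introduces directly the \emph{twisted decomposable Lefschetz group}
\[
DL_K(A) \;=\; \bigsqcup_{\tau \in G(L_e/K)} \bigl\{\, g \in Sp_{(V,\psi)} : g\beta g^{-1} = \rho_e(\tau)(\beta)\ \text{for all}\ \beta \in D \,\bigr\},
\]
defined purely from the Galois action on $D$; this is always an upper bound for $G_{l,K,1}^{alg}$, and in the endomorphism-controlled regime (including all $g\le 3$) it \emph{is} $AST_K(A)$ (Theorems~\ref{conditions for AST} and \ref{Algebraic Sato-Tate for surfaces and threefolds}). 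For the general case the paper's candidate is the motivic group $G_{\mathcal{M}_K(A),1}$ of \S 8--9, again not a normalizer construction.

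Your normalizer recipe has two soft spots worth noting. First, $H(A)$ need not be open in $N$: the quotient $N/H(A)$ can have positive dimension (already when $H(A)$ is a torus), so there is no finite group of components to map into. Second, even restricting to $\pi_0(N)$, there is in general no canonical homomorphism $\mathrm{Gal}(L/K)\to N/H(A)$. The set of $g\in Sp_{(V,\psi)}$ conjugating $D$ via a fixed $\tau$ is a torsor under $C_D(Sp_{(V,\psi)})$, not under $H(A)$; whenever $H(A)\subsetneq C_D(Sp_{(V,\psi)})$ --- precisely the failure of the hypotheses ``$H(A)=L(A)$ and $DL_{\overline F}(A)$ connected'' in Theorem~\ref{conditions for AST} --- a single $\tau$ spreads over several $H(A)$-cosets and no group map exists. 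The paper's $DL_K(A)$ sidesteps this by never attempting to lift $G(L_e/K)$: it simply takes the union of the relevant cosets and records the surjection $DL_K(A)/L(A)\twoheadrightarrow G(L_e/K)$ (see \eqref{DLKA to Galois Le over K}). Your approach and the paper's coincide exactly under the hypothesis $H(A)=C_D(Sp_{(V,\psi)})$ of Corollary~\ref{The natural candidate for AST group example 1}.
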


\begin{definition}
The algebraic Sato-Tate group plays a key role in Serre's definition of the putative Sato-Tate group
$ST_{K} (A).$ 
If Conjecture \ref{algebraic Sato Tate conj.} holds, then
base extension of $AST_K(A)_{\Q_l}$ along an embedding $\Q_l \subset \C$ 
gives a group $AST_K(A)_{\C}$ that does not depend on $l$ or the embedding. Taking a maximal
compact subgroup of $AST_K(A)(\C)$, we obtain the \emph{Sato-Tate group} $ST_K(A) 
\subseteq USp(V_{\C}).$
\end{definition}

\begin{remark}
Besides Conjecture 
\ref{algebraic Sato Tate conj.} and the usual Sato-Tate conjecture,
we expect $AST_K (A)$ to have the following properties:
\begin{gather}
AST_{K} (A) \subseteq  DL_K(A)
\label{ASTExpectedProperties1} \\
AST_{K}(A)^{\circ} = H(A)
\label{ASTExpectedProperties3} \\
\pi_{0} (AST_{K}(A)) = \pi_{0} (ST_{K}(A)).
\label{ASTExpectedProperties5}\end{gather}  
Here $H(A)$ is the Hodge group and $DL_K(A)$ is the twisted decomposable Lefschetz 
group, to be defined in \S 5. 
In \S 5, we exhibit a wide class of abelian varieties 
such that Conjecture \ref{algebraic Sato Tate conj.} and conditions 
(\ref{ASTExpectedProperties1})--(\ref{ASTExpectedProperties5}) hold 
(see Theorem \ref{conditions for AST}). 

In \S 9, we define 
a candidate group $AST_K(A)$ based on computations in \S 7 and \S 8 using the Tannakian 
formalism. Assuming that the Mumford-Tate group is the connected 
component of the motivic Mumford-Tate group (a hypothesis 
put forward by Serre in \cite[sec.\ 3.4]{Se94}), we prove
(see Theorem \ref{AST as expected extension of HA})
that the group $AST_K(A)$ satisfies conditions 
(\ref{ASTExpectedProperties1})--(\ref{ASTExpectedProperties5}).
\label{expected properties of the alg Sato-Tate group}\end{remark}


\section{Some remarks on Zariski closure. A theorem of Serre.}
\label{section:Zariski closure}

Before proceeding further, we collect some observations about the groups we have just defined.

\begin{remark}
We have the following commutative diagram with exact columns: 
$$
\xymatrix{
\rho_{l} (G_K)_{1} \ar@<0.1ex>[d]^{} \ar[r]^{}  \quad 
& \quad G_{l, K, 1}^{alg} \ar@<0.1ex>[d]^{} \ar[r]^{}   \quad & 
\quad Sp_{(V_{l}, \psi_{l})} \ar@<0.1ex>[d]^{} \\ 
\rho_{l} (G_K) \ar@<0.1ex>[d]^{} \ar[r]^{}  \quad 
& \quad G_{l, K}^{alg} \ar@<0.1ex>[d]^{} \ar[r]^{}   \quad & 
\quad  GSp_{(V_{l}, \psi_{l})} \ar@<0.1ex>[d]^{} \\
\quad \Q_{l}^{\times}  \ar[r]^{}  \quad & \quad \G_{m}  
\ar[r]^{=}  \quad & 
\quad  \G_{m} \\
}
\label{initial diagram}$$
By the theorem of Bogomolov \cite{Bo} on homotheties, 
$\rho_l(G_K)$ is an open subgroup of $G_{l,K}^{alg}$.
Since $Sp_{(V_{l}, \psi_{l})}$ is closed in $GSp_{(V_{l}, \psi_{l})}$, 
$\rho_l(G_K)_1 = \rho_l(G_K) \cap Sp_{(V_{l}, \psi_{l})}$ is an open subgroup of $G_{l, K}^{alg} \cap Sp_{(V_l, \psi_l)}$.
It follows that 
\begin{equation}
G_{l, K, 1}^{alg} = G_{l, K}^{alg} \cap Sp_{(V_l, \psi_l)}
\label{A property of GlK1alg }
\end{equation}
is closed in $G_{l, K}^{alg}$, and that there is an exact sequence
\begin{equation}
1 \rightarrow G_{l, K, 1}^{alg} \rightarrow  G_{l, K}^{alg} \rightarrow \G_m \rightarrow 1.
\label{The exact sequence for GlK1alg and GlKalg}
\end{equation}
Since $G_{l, K, 1}^{alg}$ is the kernel of a homomorphism from a reductive group to a torus, it is also reductive.
\end{remark}

\begin{remark}
Let $F \subseteq K \subseteq L \subset \overline{F}$ be a chain of extensions such that
$K/F$  and $L/K$ are finite. Let $L' \subset \overline{F}$ be the Galois closure of $L$ over $K$. It is clear that 
$G_{l, L'}^{alg} \subseteq G_{l, L}^{alg} \subseteq G_{l, K}^{alg}$ and $G_{l, L', 1}^{alg}  \subseteq G_{l, L, 1}^{alg}  \subseteq G_{l, K, 1}^{alg}.$ 
On the other hand, there is a surjective homomorphism
$\rho_l(G_K)/\rho_l(G_{L'}) \to G_{l,K}^{alg}/G_{l,L'}^{alg}$, so $G_{l,L'}^{alg}$ is a subgroup of $G_{l,K}^{alg}$
of finite index, as then is $G_{l,L}^{alg}$. 
In particular, $(G_{l,L}^{alg})^\circ = (G_{l,K}^{alg})^\circ$. By a similar argument,
$(G_{l,L,1}^{alg})^\circ = (G_{l,K,1}^{alg})^\circ$.
\end{remark}

\noindent
\begin{proposition}
There is a finite Galois extension $L_{0}/K$ such that 
$G_{l, L_{0}}^{alg} = (G_{l, K}^{alg})^{\circ}$ and $G_{l, L_{0}, 1}^{alg} = (G_{l, K, 1}^{alg})^{\circ}.$ 
\label{L0realizing conn comp}\end{proposition}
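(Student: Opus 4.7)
The plan is to build $L_0$ as the compositum of two finite Galois extensions of $K$, each engineered to secure one of the two desired equalities.

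For $G_{l,L_0}^{alg} = (G_{l,K}^{alg})^{\circ}$: the component group $\pi_0(G_{l,K}^{alg}) = G_{l,K}^{alg}/(G_{l,K}^{alg})^{\circ}$ is a finite étale group scheme over $\Q_l$, so $(G_{l,K}^{alg})^{\circ}(\Q_l)$ is open of finite index in $G_{l,K}^{alg}(\Q_l)$. Continuity of $\rho_l$ then produces a continuous homomorphism from $G_K$ to the finite group $\pi_0(G_{l,K}^{alg})(\Q_l)$, whose kernel corresponds by Galois theory to a finite Galois extension $L_1/K$ with $\rho_l(G_{L_1}) \subseteq (G_{l,K}^{alg})^{\circ}(\Q_l)$. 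Taking Zariski closures gives $G_{l,L_1}^{alg} \subseteq (G_{l,K}^{alg})^{\circ}$, and combined with $(G_{l,L_1}^{alg})^{\circ} = (G_{l,K}^{alg})^{\circ}$ from the preceding remark, this forces equality.

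The analogue for the symplectic side is more delicate, since $\rho_l(G_K)$ does not land in $G_{l,K,1}^{alg}$. Restricting instead to the closed subgroup $G_{K(\mu_{l^\infty})}$, the kernel of the cyclotomic character (which is the similitude composed with $\rho_l$), gives $\rho_l(G_{K(\mu_{l^\infty})}) = \rho_l(G_K) \cap Sp = \rho_l(G_K)_1 \subseteq G_{l,K,1}^{alg}(\Q_l)$; composing with the projection to the finite group $\pi_0(G_{l,K,1}^{alg})(\Q_l)$ produces a finite Galois extension $M$ of $K(\mu_{l^\infty})$ as the kernel. Since $(G_{l,K,1}^{alg})^{\circ}$ is characteristic in the normal subgroup $G_{l,K,1}^{alg}$ of $G_{l,K}^{alg}$, conjugation by $\rho_l(G_K)$ preserves it, so the kernel is stable under the conjugation action of $G_K$ on $G_{K(\mu_{l^\infty})}$; hence $M/K$ is Galois as well, albeit of infinite degree. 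To descend, choose generators $\alpha_1, \ldots, \alpha_n$ of $M$ over $K(\mu_{l^\infty})$ and let $L_2$ be the Galois closure of $K(\alpha_1, \ldots, \alpha_n)$ over $K$; this is finite Galois with $L_2 \cdot K(\mu_{l^\infty}) \supseteq M$, yielding
\[
\rho_l(G_{L_2})_1 = \rho_l(G_{L_2} \cap G_{K(\mu_{l^\infty})}) \subseteq \rho_l(G_M) \subseteq (G_{l,K,1}^{alg})^{\circ}(\Q_l).
\]
Thus $G_{l,L_2,1}^{alg} \subseteq (G_{l,K,1}^{alg})^{\circ}$, and the previous remark again upgrades this to equality.

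Setting $L_0 := L_1 L_2$ gives a finite Galois extension of $K$ for which both equalities persist by monotonicity: $G_{l,L_0}^{alg} \subseteq G_{l,L_1}^{alg} = (G_{l,K}^{alg})^{\circ}$ is connected and hence coincides with its own identity component $(G_{l,L_0}^{alg})^{\circ} = (G_{l,K}^{alg})^{\circ}$, and a parallel argument using $L_2$ handles the symplectic side. The main obstacle is the descent step in the construction of $L_2$: the natural finite-index kernel is defined on $G_{K(\mu_{l^\infty})}$ rather than on $G_K$, so one must exploit both the characteristic nature of $(G_{l,K,1}^{alg})^{\circ}$ (to guarantee $G_K$-normality of the kernel, and hence Galois-ness of $M/K$) and the fact that $M/K(\mu_{l^\infty})$ is generated by finitely many elements algebraic over $K$.
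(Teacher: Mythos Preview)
Your argument is correct, but the paper takes a considerably shorter route. Rather than treating the two equalities separately, the paper lets $Z$ be the set of $\Q_l$-points of the closed subscheme
\[
\bigl(G_{l,K}^{alg} - (G_{l,K}^{alg})^\circ\bigr) \,\cup\, \bigl(G_{l,K,1}^{alg} - (G_{l,K,1}^{alg})^\circ\bigr)
\]
of $GSp_{(V_l,\psi_l)}$. This set is closed in the $l$-adic topology and does not contain the identity, so by continuity of $\rho_l$ and profiniteness of $G_K$ one finds in a single stroke a finite Galois extension $L_0/K$ with $\rho_l(G_{L_0}) \cap Z = \emptyset$. Both desired inclusions then drop out immediately: $\rho_l(G_{L_0}) \subseteq (G_{l,K}^{alg})^\circ(\Q_l)$ gives the first, and since $\rho_l(G_{L_0})_1 \subseteq \rho_l(G_{L_0})$ also avoids $Z$ while lying in $G_{l,K,1}^{alg}(\Q_l)$, one gets $\rho_l(G_{L_0})_1 \subseteq (G_{l,K,1}^{alg})^\circ(\Q_l)$ for the second. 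The reverse inclusions come from the preceding remark, exactly as in your argument.

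The chief economy of the paper's approach is that it completely avoids your passage through the infinite extension $K(\mu_{l^\infty})$ and the subsequent descent to a finite $L_2$: because the ``bad'' set $Z$ already sits inside $GSp_{(V_l,\psi_l)}$, one can work directly with the full image $\rho_l(G_K)$ rather than first restricting to $\rho_l(G_K)_1$. Your route is more explicit about the Galois-theoretic meaning of the resulting fields, but note that the normality of $(G_{l,K,1}^{alg})^\circ$ in $G_{l,K}^{alg}$ and the resulting Galois-ness of $M/K$, which you flag as the main obstacle, are in fact never used in your descent: all you actually invoke is that $M/K(\mu_{l^\infty})$ is finite, so that finitely many generators $\alpha_i$ (automatically algebraic over $K$) exist, and then the Galois closure of $K(\alpha_1,\dots,\alpha_n)$ over $K$ does the job regardless.
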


\begin{proof}
Let $Z$ be the set of $\Q_l$-points of the closed subscheme
$(G_{l, K}^{alg} - (G_{l, K}^{alg})^{\circ}) \cup (G_{l, K,1}^{alg} - (G_{l, K, 1}^{alg})^{\circ})$
of $GSp_{(V_l,\psi_l)}$. Since $\rho_l$ is continuous,
we can find a finite Galois extension $L_0/K$ such that
$\rho_l(G_{L_0}) \cap Z = \emptyset$.
Such an extension satisfies
$G_{l, L_{0}}^{alg} \subseteq (G_{l, K}^{alg})^{\circ}$ and $G_{l, L_{0}, 1}^{alg} \subseteq (G_{l, K, 1}^{alg})^{\circ}.$ 
Since we already have the reverse inclusions, we obtain the desired equalities.
\end{proof}

\noindent
The following theorem is a special case of a result of Serre \cite[\S 8.3.4]{Se11}.
\begin{theorem}\label{equality of conn comp for Glalg and Glalg1} 
The following map is an isomorphism:
$$i_{CC} \,\, : \,\,  G_{l, K, 1}^{alg} /(G_{l, K, 1}^{alg})^{\circ} \,\, 
{\stackrel{\cong}{\longrightarrow}} \,\, G_{l, K}^{alg} /(G_{l, K}^{alg})^{\circ}.$$  
\end{theorem}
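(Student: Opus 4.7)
The plan is to prove injectivity and surjectivity of $i_{CC}$ separately, leveraging the short exact sequence (\ref{The exact sequence for GlK1alg and GlKalg}) together with Proposition \ref{L0realizing conn comp}. The first move is to fix a finite Galois extension $L_0/K$ such that both $G_{l,L_0}^{alg} = (G_{l,K}^{alg})^{\circ}$ and $G_{l,L_0,1}^{alg} = (G_{l,K,1}^{alg})^{\circ}$ hold simultaneously, as supplied by that proposition. This gives concrete access to the two identity components through a single auxiliary field.

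Injectivity reduces to showing $G_{l,K,1}^{alg} \cap (G_{l,K}^{alg})^{\circ} = (G_{l,K,1}^{alg})^{\circ}$, since the quotient of the left-hand side by $(G_{l,K,1}^{alg})^{\circ}$ is exactly the kernel of $i_{CC}$. Using $(G_{l,K}^{alg})^{\circ} = G_{l,L_0}^{alg}$ together with identity (\ref{A property of GlK1alg }) applied to both $K$ and $L_0$, I would compute
$$G_{l,K,1}^{alg} \cap (G_{l,K}^{alg})^{\circ} = G_{l,K,1}^{alg} \cap G_{l,L_0}^{alg} = G_{l,L_0}^{alg} \cap Sp_{(V_l,\psi_l)} = G_{l,L_0,1}^{alg} = (G_{l,K,1}^{alg})^{\circ},$$
where the middle equality uses $G_{l,L_0}^{alg} \subseteq G_{l,K}^{alg}$.

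For surjectivity, the exact sequence (\ref{The exact sequence for GlK1alg and GlKalg}) identifies $G_{l,K}^{alg}/G_{l,K,1}^{alg}$ with $\G_m$, so it is enough to show that the restriction of the quotient map to $(G_{l,K}^{alg})^{\circ}$ remains surjective. This quotient map coincides, up to inverse, with the similitude character of $GSp_{(V_l,\psi_l)}$, and its composition with $\rho_l$ is therefore the $l$-adic cyclotomic character $\chi_l : G_K \to \Z_l^{\times}$ (or its inverse), which has infinite image. Since $L_0/K$ is finite, $\chi_l|_{G_{L_0}}$ still has infinite image, so the image of $(G_{l,K}^{alg})^{\circ} = G_{l,L_0}^{alg}$ under the similitude character is a Zariski closed subgroup of $\G_m$ containing an infinite set, hence equals all of $\G_m$. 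The whole argument is rather short; the only nontrivial ingredients are Proposition \ref{L0realizing conn comp} and the infinite-image property of $\chi_l$, so I do not foresee any real obstacle beyond book-keeping of which field each identity is applied to.
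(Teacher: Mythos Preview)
Your proof is correct and follows essentially the same strategy as the paper: both fix $L_0$ via Proposition~\ref{L0realizing conn comp} and exploit the exact sequence~(\ref{The exact sequence for GlK1alg and GlKalg}) applied to both $K$ and $L_0$. The paper packages this as a $3\times 3$ diagram with exact rows and exact first two columns, then invokes a snake-lemma chase to conclude that the third column is exact; you have simply unpacked that chase into explicit injectivity and surjectivity arguments. One minor redundancy: your surjectivity step re-derives, via the cyclotomic character, the surjectivity of $G_{l,L_0}^{alg}\to\G_m$, but this is already the content of~(\ref{The exact sequence for GlK1alg and GlKalg}) applied with $L_0$ in place of $K$, so you could cite it directly.
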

\begin{proof} 
Choose $L_0$ as in Proposition~\ref{L0realizing conn comp}.
Then the following commutative diagram has exact rows:
$$
\xymatrix{
\quad & \quad 1 \ar@<0.1ex>[d]^{}  \quad & \quad 1 \ar@<0.1ex>[d]^{} \quad & 
1 \ar@<0.1ex>[d]^{} \\ 
1 \ar[r]^{} \quad & \quad G_{l, L_0, 1}^{alg} \ar@<0.1ex>[d]^{} \ar[r]^{}  \quad 
& \quad  G_{l, K, 1}^{alg} \ar@<0.1ex>[d]^{} \ar[r]^{}  \quad 
& \quad G_{l, K, 1}^{alg} /(G_{l, K, 1}^{alg})^{\circ}   
\ar@<0.1ex>[d]^{i_{CC}} \ar[r]^{}  \quad 
& \,\, 1 \\ 
1 \ar[r]^{} \quad & \quad G_{l, L_0}^{alg} \ar@<0.1ex>[d]^{} \ar[r]^{}  \quad 
& \quad  G_{l, K}^{alg} \ar@<0.1ex>[d]^{} \ar[r]^{}  \quad 
& \quad G_{l, K}^{alg} /(G_{l, K}^{alg})^{\circ}   \ar@<0.1ex>[d]^{} \ar[r]^{}  \quad 
&  \,\, 1 \\
1 \ar[r]^{} \quad & \quad \G_m  \ar@<0.1ex>[d]^{}  \ar[r]^{=}  
\quad & \quad \G_m  \ar@<0.1ex>[d]^{}  \ar[r]^{} \quad & 1\\
 \quad & \quad 1   \quad & 1 \\}
\label{diagram in Serre theorem}$$
\noindent
The first two columns are also exact by \eqref{The exact sequence for GlK1alg and GlKalg},
so a diagram chase (as in the snake lemma) shows that the third column is also exact.
\end{proof}

\begin{remark}
Observe that the natural action by left translation
\begin{equation}
G_K \times G_{l, K}^{alg} /(G_{l, K}^{alg})^{\circ} \rightarrow G_{l, K}^{alg} /(G_{l, K}^{alg})^{\circ} 
\label{ContLeftTransOnConnecComp}\end{equation}
is continuous. By Theorem \ref{equality of conn comp for Glalg and Glalg1}, we also have a continuous action
\begin{equation}
G_K \times G_{l, K,1}^{alg} /(G_{l, K,1}^{alg})^{\circ} \rightarrow G_{l, K,1}^{alg} /(G_{l, K,1}^{alg})^{\circ}.
\end{equation}
\end{remark}

\noindent
Choose a suitable field embedding $\Q_l \rightarrow \C$ and
put  $ {G_{l, K, 1}^{alg}}_{\C} := G_{l, K, 1}^{alg} \otimes_{\Q_l} \C.$ 
By Theorem \ref{equality of conn comp for Glalg and Glalg1} and  
\cite[Lemma~2.8]{FKRS}, we have the following.

\begin{proposition}\label{connected components iso} 
Let $K/F$ be any finite extension such that $F \subseteq K \subset \overline{F}.$
Then there are natural isomorphisms
\begin{equation}
G_{l, K, 1}^{alg} /(G_{l, K, 1}^{alg})^{\circ} \quad
\cong \quad {G_{l, K, 1}^{alg}}_{\C} \,\, /({G_{l, K, 1}^{alg}}_{\C})^{\circ}
\quad \cong \quad 
ST_{A} / ST_{A}^{\circ}.
\label{ConnCompIsom}\end{equation}
If the algebraic Sato-Tate conjecture (Conjecture~\ref{algebraic Sato Tate conj.})
holds, then all of the groups in (\ref{ConnCompIsom}) are isomorphic to
$AST_{K} (A) /AST_{K} (A)^{\circ}.$
\end{proposition}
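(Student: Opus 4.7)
The plan is to prove the chain of isomorphisms in \eqref{ConnCompIsom} one link at a time, combining standard base-change behavior of algebraic groups with the topological relationship between a complex reductive group and a maximal compact subgroup, then to extend the chain to $AST_K(A)/AST_K(A)^{\circ}$ using Conjecture~\ref{algebraic Sato Tate conj.}.

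For the first isomorphism, I would invoke the general fact that for a group scheme of finite type over a field of characteristic zero, formation of the identity component commutes with base change. Concretely, $(G_{l, K, 1}^{alg})^{\circ} \otimes_{\Q_l} \C$ is open in $(G_{l, K, 1}^{alg})_{\C}$, closed, and (as the base change of a geometrically connected smooth $\Q_l$-variety) connected, hence coincides with $({G_{l, K, 1}^{alg}}_{\C})^{\circ}$. Passing to the quotient by this common open-and-closed subgroup yields the first isomorphism.

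For the second isomorphism, I would appeal directly to \cite[Lemma~2.8]{FKRS}, which asserts that the inclusion of a maximal compact subgroup into a complex reductive Lie group induces a bijection on component groups (essentially because the ambient group deformation retracts onto such a subgroup). We recalled after \eqref{The exact sequence for GlK1alg and GlKalg} that $G_{l, K, 1}^{alg}$ is reductive, so $(G_{l, K, 1}^{alg})_{\C}$ is too, and $ST_A$ is by construction one of its maximal compact subgroups, so the lemma applies verbatim.

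Finally, under Conjecture~\ref{algebraic Sato Tate conj.}, the equality $AST_K(A)_{\Q_l} = G_{l, K, 1}^{alg}$ of $\Q_l$-group schemes lets me run the first argument in the opposite direction, with $\Q \subset \Q_l$ in place of $\Q_l \subset \C$, yielding $AST_K(A)/AST_K(A)^{\circ} \cong G_{l, K, 1}^{alg}/(G_{l, K, 1}^{alg})^{\circ}$ and closing the chain. There is no substantial obstacle; the only point demanding care is that the base-change compatibility of $(\cdot)^{\circ}$ requires the identity component to be geometrically connected, which is automatic in characteristic zero, and that the appeal to \cite[Lemma~2.8]{FKRS} is legitimate for the ambient group $(G_{l, K, 1}^{alg})_{\C}$ regardless of whether the algebraic Sato-Tate conjecture has been established.
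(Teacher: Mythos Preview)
Your proposal is correct and follows essentially the same route as the paper, which simply records the proposition as a consequence of \cite[Lemma~2.8]{FKRS} together with Theorem~\ref{equality of conn comp for Glalg and Glalg1}. Your write-up is in fact more explicit: you spell out the base-change stability of the identity component (which the paper leaves implicit) and you justify the applicability of \cite[Lemma~2.8]{FKRS} directly from the reductivity of $G_{l,K,1}^{alg}$ established after \eqref{The exact sequence for GlK1alg and GlKalg}, whereas the paper's citation of Theorem~\ref{equality of conn comp for Glalg and Glalg1} appears to serve the same purpose by passing through $G_{l,K}^{alg}$.
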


\begin{remark}\label{minimal field of connectedness}
Consider the continuous homomorphism
\begin{equation}
\epsilon_{l, K}\, :\, G_K \rightarrow G_{l, K}^{alg}(\Q_l). 
\nonumber\end{equation}
Serre \cite{Se84}, \cite{Se81} proved that $\epsilon_{l, K}^{-1} ((G_{l, K}^{alg})^{\circ}(\Q_l))$ is independent of $l$.
Since $(G_{l, K}^{alg})^{\circ}$ is open in $G_{l, K}^{alg}$, this preimage 
equals $G_{L_0} = G(\overline{K}/L_{0})$ for some finite Galois extension $L_{0}/K.$
This theorem of Serre can also be seen from Propositions
\ref{L0realizing conn comp} and \ref{connected components iso} and Theorem 
\ref{equality of conn comp for Glalg and Glalg1}.
In fact, $L_{0}/K$ is the minimal extension such that 
$G_{l, L_{0}}^{alg}$ and $G_{l, L_{0}, 1}^{alg}$ are connected for
all $l.$ 
Let $H_{l, K, 1} := \rho_{l}^{-1} (\rho_{l} (G_K)_1).$ Put $K_1 := \overline{K}^{H_{l, K, 1}}.$
Observe that
\begin{gather*}
\epsilon_{l, K}^{-1} ((G_{l, K, 1}^{alg})^{\circ}(\Q_l)) = \epsilon_{l, K}^{-1} (G_{l, L_0, 1}^{alg}(\Q_l)) =
\epsilon_{l, K}^{-1} ((G_{l, L_0}^{alg} \cap Sp_{(V_l, \psi_l)})(\Q_l)) = 
\\
= \epsilon_{l, K}^{-1} (G_{l, L_0}^{alg}(\Q_l)) \,\cap\, 
\epsilon_{l, K}^{-1}(Sp_{(V_l, \psi_l)}(\Q_l))
= G_{L_0} \cap G_{K_1} = G_{LK_1}.
\end{gather*}
\end{remark}


\section{Mumford-Tate group and Mumford-Tate conjecture}

We next recall some standard definitions and results concerning the Mumford-Tate group.
We continue to assume that $A$ is an abelian variety over a number field $F$ equipped with a fixed
embedding into $\C$.

\begin{definition}
For $V = H_1 (A,\, \Q)$,  define the cocharacter
$$\mu_{\infty, A}:\mathbb{G}_{m}({\C})\rightarrow GL(V_{\C})$$ 
such that for any $z\in {\C}^{\times},$ the automorphism $\mu_{\infty, A}(z)$ acts as multiplication by
$z$ on $H^{-1,0}$ and as the identity on $H^{0,-1}.$ 
Observe that by definition we have
$\mu_{\infty, A}(\C^{\times}) \, \subset GSp_{(V,\, \psi)} (\C).$
\end{definition}

\begin{definition}\label{Definition Mumford Tate} (Mumford-Tate and Hodge groups)
\newline

\begin{enumerate}
\item{}
The \emph{Mumford-Tate group} of $A/F$ is the smallest algebraic subgroup 
$MT(A) \subseteq GL_{V}$ over $\Q$ such that $MT(A)({\C})$
contains $\mu_{\infty}({\C}).$ 
\item{} The \emph{decomposable Hodge group} is $D H(A) := MT(A) \cap SL_{V}$.
\item{}
The \emph{Hodge group} $H(A) := D H(A)^{\circ}$ is the connected
component of the identity in $D H(A).$
\end{enumerate}
By definition, if $K/F$ is a field extension
such that $K \subset \overline{F}$, then $MT (A)$ and $H (A)$ for $A/F$ 
and for $A/K$ are the same. That is, we do not need to include $K$ in the notation.
\end{definition}

\begin{remark}
Note that 
$MT(A)$ is a reductive subgroup of $GSp_{(V,\, \psi)}$ \cite[Prop. 3.6]{D},
$H(A)\, \subseteq Sp_{(V,\, \psi)}$,
and $MT(A) \subseteq C_{D}(GSp_{(V,\, \psi)}).$ Hence 
\begin{equation}
H(A) \subseteq C_{D}(Sp_{(V,\, \psi)}).
\label{DHA and CD}\end{equation}
\end{remark}
  
\begin{definition}\label{Definition Lefschetz} 
The algebraic group $L(A) = C_D^{\circ}(Sp_{(V,\, \psi)})$ is called
the {\it Lefschetz group}\footnote{This terminology is due to Murty \cite{Mu}.} of $A.$ By \eqref{DHA and CD} and the connectedness of $H(A)$, we have
$H(A) \subseteq L(A)$.
\end{definition}

\begin{definition}
For a field extension $L/\Q$ put
$$MT(A)_{L} := MT(A) \otimes_{\Q} L,\quad D H(A)_{L} := D H(A) \otimes_{\Q} L, $$
$$ H(A)_{L} := H(A) \otimes_{\Q} L,\quad L(A)_{L} := L(A) \otimes_{\Q} L.$$
\end{definition}

\begin{theorem}
\label{Theorem Deligne} 
(Deligne \cite[I, Prop.\ 6.2]{D})
For any prime number $l$,
\begin{equation}
(G_l^{alg})^{\circ} \subseteq MT(A)_{\Q_l}.
\label{Del ineq}\end{equation}
\end{theorem}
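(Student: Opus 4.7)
The plan is to combine the tensor-theoretic characterization of $MT(A)$ with Deligne's theorem that Hodge classes on abelian varieties behave well under the Betti–étale comparison isomorphism, together with Proposition~\ref{L0realizing conn comp} to pass from $G_l^{alg}$ to its identity component.

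First, I would recall the standard description of the Mumford-Tate group as a tensor stabilizer: a Hodge class in $T^{a,b}V := V^{\otimes a} \otimes (V^{\ast})^{\otimes b}$ is by definition an element fixed by $\mu_{\infty,A}(\mathbb{G}_m(\C))$ on the $(0,0)$-piece of the induced Hodge structure, and $MT(A)$ is exactly the $\Q$-algebraic subgroup of $GL_V$ (times the scaling factor) cut out by fixing all such Hodge classes, as $(a,b)$ ranges over all pairs. Consequently, $MT(A)_{\Q_l}$ is the stabilizer, inside $GL(V_l)$ (identified with $GL(V) \otimes \Q_l$ via the comparison isomorphism $V_l \cong V \otimes_{\Q} \Q_l$ recalled in the paper), of the images of all Hodge classes in $T^{a,b}V_l$.

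Second, I would invoke Deligne's theorem that every Hodge class on a power of $A$ is absolutely Hodge. The consequence I need is weaker: under the Betti–étale comparison isomorphism, each Hodge class in $T^{a,b}V$ maps to an element of $T^{a,b}V_l$ that is fixed by an open subgroup of $G_F$ (the twist by the cyclotomic character being absorbed by the similitude factor, or by passing to the weight-zero piece). Using Proposition~\ref{L0realizing conn comp}, I would now choose a finite Galois extension $L_0/F$ such that simultaneously $G_{l,L_0}^{alg} = (G_l^{alg})^{\circ}$ and $\rho_l(G_{L_0})$ fixes a finite generating collection of such $l$-adic Hodge tensors sufficient to cut out $MT(A)_{\Q_l}$ as their joint stabilizer.

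Combining these two ingredients, $\rho_l(G_{L_0})$ lies in the $\Q_l$-points of the joint stabilizer, which is $MT(A)_{\Q_l}$; since $MT(A)_{\Q_l}$ is Zariski closed, we deduce
\[
(G_l^{alg})^{\circ} \;=\; G_{l,L_0}^{alg} \;=\; \overline{\rho_l(G_{L_0})}^{\mathrm{Zar}} \;\subseteq\; MT(A)_{\Q_l},
\]
which is the desired inclusion \eqref{Del ineq}. The main obstacle is the deep input of Deligne's ``Hodge is absolutely Hodge'' theorem for abelian varieties; the rest is a formal manipulation of Zariski closures, tensor stabilizers, and the choice of $L_0$ provided by Proposition~\ref{L0realizing conn comp}.
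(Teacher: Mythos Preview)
The paper does not give its own proof of this theorem: it is stated with attribution to Deligne \cite[I, Prop.~6.2]{D} and immediately followed by the Mumford--Tate conjecture, with no proof environment in between. So there is nothing to compare against directly.

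That said, your sketch is essentially the standard argument behind Deligne's result, and it is correct in outline. The two ingredients---that $MT(A)$ is the stabilizer of all Hodge tensors in the various $T^{a,b}V$, and that by Deligne's theorem each such tensor becomes, under the comparison isomorphism, an $\ell$-adic class fixed by an open subgroup of $G_F$---are exactly what is needed. One minor simplification: you do not really need the full strength of Proposition~\ref{L0realizing conn comp}. It suffices to choose finitely many Hodge tensors cutting out $MT(A)$, take a single finite extension $L/F$ over which $\rho_l(G_L)$ fixes all of them, and then observe that $G_{l,L}^{alg}$ automatically contains $(G_l^{alg})^{\circ}$ because it has finite index in $G_l^{alg}$. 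This yields $(G_l^{alg})^{\circ} \subseteq G_{l,L}^{alg} \subseteq MT(A)_{\Q_l}$ directly, without separately arranging that $G_{l,L}^{alg}$ equal the identity component.
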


\begin{conjecture} (Mumford-Tate) \label{Mumford-Tate} 
For any prime number $l$,
\begin{equation}
(G_l^{alg})^{\circ} = MT(A)_{\Q_l}.
\label{MT eq}\end{equation}
\end{conjecture}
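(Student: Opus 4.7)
The inclusion $(G_l^{alg})^\circ \subseteq MT(A)_{\Q_l}$ is already provided by Theorem~\ref{Theorem Deligne}, so the plan is to attack the reverse inclusion $MT(A)_{\Q_l} \subseteq (G_l^{alg})^\circ$. The natural first step is to isolate the contribution of the center: both groups sit inside $GSp_{(V,\psi)}$, so composition with the multiplier character lands in $\mathbb{G}_m$, and there matching is easy because the restriction of $\rho_l$ to inertia at any unramified prime recovers the cyclotomic character, forcing the image to be all of $\mathbb{G}_m$. After this reduction it suffices to show $H(A)_{\Q_l} \subseteq (G_{l,1}^{alg})^\circ$, an inclusion of connected reductive subgroups of $Sp_{(V_l,\psi_l)}$.

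For the semisimple step I would invoke a Tannakian--Faltings strategy. Faltings' proof of the Tate conjecture for endomorphisms gives $\mathrm{End}_{G_F}(V_l) = D \otimes_\Q \Q_l$, and translating this through the Tannakian dictionary forces the centralizer of $(G_l^{alg})^\circ$ in $GL(V_l)$ to equal $D \otimes \Q_l$. Since $MT(A)$ has the same centralizer in $GL(V)$, both $(G_{l,1}^{alg})^\circ$ and $H(A)_{\Q_l}$ then lie inside the common Lefschetz envelope $L(A)_{\Q_l}$ of Definition~\ref{Definition Lefschetz}. In the cases covered by \cite{BGK1,BGK2}, where $MT(A)$ is forced by its commutant $D$ to coincide with the relevant connected part of the Lefschetz group, the two squeezed inclusions collapse and Deligne's bound upgrades to equality. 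This is how I would expect to recover the conjecture in all the endomorphism-controlled cases exploited later in the paper, including dimension at most $3$.

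The genuine obstacle arises once $MT(A)$ is strictly larger than $L(A)^\circ$, the prototypical example being Mumford's construction in dimension $4$. There, exceptional Hodge cycles on powers of $A$ exist which are not induced by endomorphisms, so Faltings alone cannot pin down $(G_l^{alg})^\circ$ from below. My plan in that regime would be a compatible-systems argument in the spirit of Serre and Pink: compare the root data of $(G_l^{alg})^\circ$ for varying $l$, and use the fact that after base change to $\overline{\Q_l}$ there is a cocharacter of $(G_l^{alg})^\circ$ with the same Hodge--Tate weights as $\mu_{\infty,A}$. The minimality clause in Definition~\ref{Definition Mumford Tate} would then try to force $MT(A)_{\Q_l} \subseteq (G_l^{alg})^\circ$. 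Honestly, this last implication is precisely where every known approach stalls without input from the Hodge conjecture, and I expect it to remain the main obstacle; the present paper wisely sidesteps the difficulty by restricting to situations where the Mumford-Tate group is endomorphism-controlled, leaving the general statement as the conjecture recorded here.
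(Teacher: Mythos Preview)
The statement you are attempting to prove is Conjecture~\ref{Mumford-Tate}, and the paper offers no proof of it: it is recorded precisely as an open conjecture, with the paper's contribution being to verify it (and the stronger algebraic Sato--Tate conjecture) only in the endomorphism-controlled situations of \S 5--6. So there is no ``paper's own proof'' against which to compare your proposal, and your closing paragraph already recognizes this. Your write-up is best read not as a proof but as a reasonable sketch of the known landscape: Deligne's inclusion, the reduction via the multiplier character to the Hodge-group level, the Faltings--Tannakian squeeze in the cases where $H(A)=L(A)$, and the genuine obstruction beyond that.

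Two small corrections are worth noting. First, your phrasing ``once $MT(A)$ is strictly larger than $L(A)^\circ$'' is inverted: the Lefschetz group $L(A)$ is already connected by definition, and the obstruction in Mumford's fourfolds is that $H(A)$ is strictly \emph{smaller} than $L(A)$, so that endomorphisms no longer pin down the Hodge group from above. Second, your Faltings step shows that the centralizer of $(G_l^{alg})^\circ$ in $GL(V_l)$ equals $D\otimes\Q_l$, which places $(G_{l,1}^{alg})^\circ$ inside $L(A)_{\Q_l}$; but this is an \emph{upper} bound on $(G_{l,1}^{alg})^\circ$, not a lower one, so it does not by itself furnish the reverse inclusion $H(A)_{\Q_l}\subseteq (G_{l,1}^{alg})^\circ$ you are after. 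In the endomorphism-controlled cases the argument works only because $H(A)=L(A)$ collapses the sandwich $(G_{l,1}^{alg})^\circ\subseteq H(A)_{\Q_l}\subseteq L(A)_{\Q_l}$ from Deligne's side, not from the Faltings side. Outside that regime, as you say, no general argument is known.
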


\begin{remark}\label{Mumford-Tate Conj and Deligne Theorem} 
Observe that (\ref{Del ineq}) is equivalent to the inclusion
\begin{equation}
(G_{l, K, 1}^{alg})^{\circ} \subseteq H (A)_{\Q_l},
\label{Del ineq1}\end{equation}
while the Mumford-Tate conjecture is equivalent to the equality
\begin{equation}
(G_{l, K, 1}^{alg})^{\circ} = H(A)_{\Q_l}.
\label{H eq}\end{equation}
This follows immediately from the following commutative diagram
in which every column is exact and every horizontal arrow is a containment 
of corresponding group schemes.
$$
\xymatrix{
\quad 1 \ar@<0.1ex>[d]^{}  \quad 
& \quad 1 \ar@<0.1ex>[d]^{}  \quad 
&  1\ar@<0.1ex>[d]^{}\\
G_{l, K, 1}^{alg} \ar@<0.1ex>[d]^{} \ar[r]^{}  \quad 
& \quad D H(A)_{\Q_l}   
\ar@<0.1ex>[d]^{} \ar[r]^{}  \quad & 
\quad Sp_{V_{l}, \psi_{l}} \ar@<0.1ex>[d]^{} \\ 
G_{l, K}^{alg} \ar@<0.1ex>[d]^{} \ar[r]^{} \quad & 
\quad MT (A)_{\Q_l}  \ar@<0.1ex>[d]^{} \ar[r]^{}  \quad & 
\quad  GSp_{V_{l}, \psi_{l}} \ar@<0.1ex>[d]^{} \\
\quad \G_{m} \ar[r]^{=} \ar@<0.1ex>[d]^{}  \quad & 
\quad  \G_{m} \ar[r]^{=} \ar@<0.1ex>[d]^{}  \quad & 
\quad  \G_{m} \ar@<0.1ex>[d]^{} \\
\quad 1   \quad & \quad 1   \quad &  1}
\label{diagram in Deligne theorem}
$$
\end{remark}

It is known that the Mumford-Tate and Hodge groups do not behave well
in general with respect to products of abelian varieties
\cite[p. 316]{G}. However one can prove the following theorem.

\begin{theorem}\label{Mumford-Tate group standard properties} 
The Mumford-Tate group of abelian varieties have the following properties.
\begin{itemize} 
\item[1.] An isogeny $\phi : A_1 \rightarrow A_2$ of abelian 
varieties induces isomorphisms
$MT (A_1) \cong MT (A_2)$  and $H (A_1) \cong H (A_2).$ 
\item[2.] If $A$ is an abelian variety and $A^s := \prod_{i=1}^s A$,
then $MT (A^s) \cong MT (A)$ and $H (A^s) \cong H (A)$.
\end{itemize}
\end{theorem}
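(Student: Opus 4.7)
The plan is to derive both parts from the minimality characterization of the Mumford--Tate group together with standard properties of the associated rational Hodge structures.

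For part 1, I would first observe that an isogeny $\phi : A_1 \to A_2$ induces a map $\phi_* : H_1(A_1(\C),\Z) \to H_1(A_2(\C),\Z)$ with finite kernel and cokernel (of order dividing a power of $\deg \phi$), which therefore becomes an isomorphism $\phi_* : V(A_1) \to V(A_2)$ after tensoring with $\Q$. Since $\phi_{\C}$ is holomorphic, this is an isomorphism of rational Hodge structures and so identifies the Hodge decompositions; hence it identifies the cocharacters $\mu_{\infty,A_1}$ and $\mu_{\infty,A_2}$ via the induced isomorphism $GL_{V(A_1)} \cong GL_{V(A_2)}$. Minimality of the Mumford--Tate group then yields $MT(A_1) \cong MT(A_2)$. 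Since any isomorphism of vector spaces carries $SL$ to $SL$, intersecting with $SL$ gives $DH(A_1) \cong DH(A_2)$, and passing to connected components of the identity gives $H(A_1) \cong H(A_2)$.

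For part 2, I would first record that $V(A^s) = V(A)^s$ as rational Hodge structures, with Hodge decomposition the direct sum of $s$ copies of the decomposition of $V(A)$. Consequently $\mu_{\infty,A^s} = \Delta \circ \mu_{\infty,A}$, where $\Delta : GL_{V(A)} \hookrightarrow GL_{V(A)^s}$ is the diagonal embedding. A ``minimality sandwich'' then applies: on the one hand $\Delta(MT(A))$ is an algebraic subgroup of $GL_{V(A^s)}$ whose $\C$-points contain $\mu_{\infty,A^s}(\C^\times)$, so $MT(A^s) \subseteq \Delta(MT(A))$; on the other hand $\Delta^{-1}(MT(A^s))$ is an algebraic subgroup of $GL_{V(A)}$ whose $\C$-points contain $\mu_{\infty,A}(\C^\times)$, so $\Delta(MT(A)) \subseteq MT(A^s)$. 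Thus $\Delta$ restricts to an isomorphism $MT(A) \cong MT(A^s)$.

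The main technical point I anticipate lies in transferring this from $MT$ to $H$. The subtlety is that $\Delta^{-1}(SL_{V(A)^s}) = \{g \in GL_{V(A)} : \det(g)^s = 1\}$ is a finite extension of $SL_{V(A)}$ rather than $SL_{V(A)}$ itself, so $\Delta$ need not identify $DH(A)$ with $DH(A^s)$ on the nose. However, the discrepancy is only by a finite group of $s$-th roots of unity, which disappears upon passing to the connected component of the identity; hence $H(A) = DH(A)^{\circ} \cong DH(A^s)^{\circ} = H(A^s)$. Beyond this mild issue, the argument is a formal consequence of the definition of $MT$ and the behavior of rational Hodge structures under isogenies and direct sums.
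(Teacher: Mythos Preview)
Your proposal is correct and follows essentially the same approach as the paper: both arguments use the isomorphism of rational Hodge structures induced by an isogeny for part 1, and for part 2 the identification $\mu_{\infty,A^s} = \Delta \circ \mu_{\infty,A}$ together with a two-sided minimality argument to conclude $MT(A^s) = \Delta(MT(A))$, then pass to $SL$ and connected components for the Hodge groups. In fact your treatment of the Hodge-group step is slightly more careful than the paper's, since you explicitly flag the finite discrepancy $\Delta^{-1}(SL_{V(A)^s}) = \{g : \det(g)^s = 1\} \supsetneq SL_{V(A)}$ and note that it vanishes upon taking identity components.
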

\begin{proof}
1. This follows because an isogeny induces an isomorphism of associated
polarised rational Hodge structures.

\noindent
2. Observe that there is a natural isomorphism $H^1 (A^s (\C), \, \Q) \cong 
\oplus_{i=1}^s H^1 (A (\C), \, \Q)$ or simply $V (A^s) \cong V(A)^s.$  
So in this case, the Hodge 
structure on $H^1 (A^s (\C), \, \Q)$ is isomorphic to the direct sum of the
Hodge structures on $H^1 (A (\C), \, \Q).$ Via this identification, we have
$\mu_{\infty, A^s} = \Delta (\mu_{\infty, A})$ where 
$\Delta\, :\, GSp (V_{\C}) \rightarrow GSp (V_{\C})^s \subseteq GL (V_{\C}^{s})$
is the diagonal homomorphism. So we can consider 
$MT (A^s)$ as a subgroup of $MT (A)^s := \prod_{i=1}^s MT (A).$
Consider the diagonal homomorphism
$\Delta \, :\, MT (A) \rightarrow MT (A)^s.$  
Since $\mu_{\infty, A^s}(\G_m (\C)) = 
\Delta (\mu_{\infty, A}) (\G_m (\C)) \subseteq \Delta (MT (A)) (\C)$,
we have
$MT (A^s) \subseteq  \Delta (MT (A)) \cong MT (A).$ Since 
$MT (A^s)(\C)$ contains $\Delta (\mu_{\infty, A}(\G_m (\C)))$ then the image of 
$MT (A^s)$ via projection $\pi \, :\, MT (A)^s \rightarrow MT (A)$ on every factor  
contains the image of $\mu_{\infty, A}(\G_m (\C)).$ By the definition of $MT(A)$, this means that 
the projection $\pi$ must surject onto $MT (A).$ On the other hand, since $MT (A^s) \subseteq  \Delta (MT (A))$,
it is clear that for $\pi$ to be onto, we must have $MT (A^s) = \Delta (MT (A))$.
To get the isomorphism of Hodge groups, we note that the isomorphism $V (A^s) \cong V(A)^s$  
gives an isomorphism $SL_{V(A^s)} \cong SL_{V(A)^s}.$ We thus have  the following isomorphisms 
of algebraic groups over $\Q$:
\begin{gather*}
MT (A^s) \cap  SL_{V(A^s)} \cong \Delta (MT (A)) \cap SL_{V(A)^s} \cong \\
\cong \Delta (MT (A)) \cap (SL_{V(A)})^s \cong  MT (A) \cap SL_{V(A)}.
\end{gather*}
Taking connected components, we get the desired isomorphism of Hodge groups. 
\end{proof}

One can make a corresponding calculation also on the Galois side.
\begin{theorem}\label{Zariski closure standard properties} 
We have the following results.
\begin{itemize} 
\item[1.] An isogeny $\phi : A_1 \rightarrow A_2$ of abelian varieties over a number field $K$ induces isomorphisms 
$G_{l, K}^{alg} (A_1) \cong 
G_{l, K}^{alg} (A_2)$ and 
$G_{l, K, 1}^{alg} (A_1) \cong 
G_{l, K, 1}^{alg} (A_2).$ 
\item[2.] If $A$ is an abelian variety over a number field $K$, then
for any positive integer $s$,
$G_{l, K}^{alg} (A^s) \cong G_{l, K}^{alg}  (A)$ and 
$G_{l, K, 1}^{alg}  (A^s) = G_{l, K, 1}^{alg}  (A).$
\end{itemize}
\end{theorem}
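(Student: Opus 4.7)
The plan is to transfer the proof of Theorem~\ref{Mumford-Tate group standard properties} to the Galois/Tate module side, using the standard functoriality of $l$-adic Tate modules under isogenies and products of abelian varieties. In both cases the key principle is that $\rho_l$ for the target is obtained from $\rho_l$ for the source by applying a specific closed morphism of algebraic groups (an isomorphism induced by the isogeny in part 1, the diagonal embedding in part 2), and Zariski closure commutes with such morphisms.

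For part 1, an isogeny $\phi\colon A_1 \to A_2$ over $K$ induces a $G_K$-equivariant morphism $T_l(\phi)\colon T_l(A_1) \to T_l(A_2)$ which becomes an isomorphism $V_l(A_1) \cong V_l(A_2)$ after tensoring with $\Q_l$. A polarization on $A_2$ pulls back via $\phi$ to a polarization on $A_1$, and the induced Weil pairings agree up to a scalar (essentially $\deg \phi$). Consequently, $T_l(\phi)$ identifies $GSp_{(V_l(A_1),\psi_l)}$ with $GSp_{(V_l(A_2),\psi_l)}$ as algebraic groups over $\Q_l$ and identifies the two images $\rho_l(G_K)$; taking Zariski closures yields the isomorphism $G_{l,K}^{alg}(A_1) \cong G_{l,K}^{alg}(A_2)$. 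Intersecting with $Sp$ (using \eqref{A property of GlK1alg }) gives the corresponding isomorphism for $G_{l,K,1}^{alg}$.

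For part 2, the natural isomorphism $V_l(A^s) \cong V_l(A)^s$ is $G_K$-equivariant, and $\rho_l^{A^s}$ factors as the composition of $\rho_l^A$ with the diagonal embedding $\Delta\colon GL_{V_l(A)} \to GL_{V_l(A)^s}$, so $\rho_l^{A^s}(G_K) = \Delta(\rho_l^A(G_K))$. Since $\Delta$ is a closed immersion of algebraic groups, Zariski closure commutes with $\Delta$, giving $G_{l,K}^{alg}(A^s) = \Delta(G_{l,K}^{alg}(A)) \cong G_{l,K}^{alg}(A)$. Equipping $A^s$ with the product polarization, the symplectic form on $V_l(A)^s$ is the orthogonal sum of $s$ copies of $\psi_l$, so $\Delta(Sp_{(V_l(A),\psi_l)}) = \Delta(GL_{V_l(A)}) \cap Sp_{(V_l(A)^s,\psi_l^{A^s})}$, and intersecting recovers $G_{l,K,1}^{alg}(A^s) = G_{l,K,1}^{alg}(A)$ literally under this identification.

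The substantive content is entirely contained in these two functoriality facts: the main (minor) obstacle is verifying that the symplectic structures behave well under isogeny and products, i.e.\ that the Weil pairings on $V_l(A_1)$ and $V_l(A_2)$ are proportional under the isogeny, and that the Weil pairing on $V_l(A^s)$ decomposes as an orthogonal sum, so that the images land in the claimed subgroups. Both assertions follow directly from the classical theory of polarizations on abelian varieties.
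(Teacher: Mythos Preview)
Your proposal is correct and follows exactly the same approach as the paper's proof: for part~1 you use that an isogeny induces a $G_K$-equivariant isomorphism $V_l(A_1)\cong V_l(A_2)$, and for part~2 you use that $\rho_{l,A^s}$ is the composite of $\rho_{l,A}$ with the diagonal embedding $\Delta$. The paper's proof is in fact terser than yours---it records only these two observations and leaves the compatibility with the symplectic structures implicit---so your version simply spells out details the paper takes for granted.
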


\begin{proof}
1. Observe that an isogeny $\phi$ induces an isomorphism of Galois modules
$V_l (A_1) \cong V_l (A_2).$

\noindent
2. The natural isomorphism $V_l (A^s) \cong V_l (A)^s$ of $G_K$-modules shows that
$\rho_{l, A^s} \cong \Delta \rho_{l, A}$, where $\Delta \rho_{l, A} \, :\, G_K \rightarrow 
GSp (V_l (A))^s$ is the natural diagonal representation
$\Delta \rho_{l, A} = {{\rm diag}} (\rho_{l, A}, \dots, \rho_{l, A}).$
\end{proof}

\begin{corollary}\label{MT true for A implies true for As}
Let $A/K$ be an abelian variety such that the Mumford-Tate conjecture
holds for $A.$ Then the Mumford-Tate conjecture
holds for $A^s$ for any positive integer $s.$
\end{corollary}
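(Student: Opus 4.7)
The plan is to deduce the corollary formally from Theorems \ref{Mumford-Tate group standard properties}(2) and \ref{Zariski closure standard properties}(2), which together say that passing from $A$ to $A^s$ replaces both the Mumford-Tate group and the $\ell$-adic algebraic monodromy group by their images under a single diagonal embedding. The strategy, in brief, is: embed both conjectural equalities diagonally and observe that they coincide term by term, so the conjecture for $A^s$ is equivalent to the conjecture for $A$.

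First I would fix the identification $V(A^s) \cong V(A)^s$, which induces $V_l(A^s) \cong V_l(A)^s$ compatibly with the $G_K$-actions and with the Hodge decompositions. Under this identification, Theorem \ref{Mumford-Tate group standard properties}(2) yields $MT(A^s) = \Delta(MT(A))$ as subgroups of $GL_{V(A)^s}$, where $\Delta : GL_{V(A)} \to GL_{V(A)^s}$ is the diagonal embedding. Similarly, Theorem \ref{Zariski closure standard properties}(2), combined with the fact that $\rho_{l,A^s}$ is the diagonal representation $\Delta\rho_{l,A}$, gives $G_{l,K}^{alg}(A^s) = \Delta(G_{l,K}^{alg}(A))$ as subgroups of $GL_{V_l(A)^s}$.

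Next I would take connected components of the second identity: since $\Delta$ is a closed immersion of algebraic groups, it maps the identity component onto the identity component of its image, so $(G_{l,K}^{alg}(A^s))^{\circ} = \Delta((G_{l,K}^{alg}(A))^{\circ})$. Base-changing the first identity to $\Q_l$ gives $MT(A^s)_{\Q_l} = \Delta(MT(A)_{\Q_l})$. Assuming the Mumford-Tate conjecture for $A$, that is $(G_{l,K}^{alg}(A))^{\circ} = MT(A)_{\Q_l}$, we can apply $\Delta$ to both sides to conclude $(G_{l,K}^{alg}(A^s))^{\circ} = MT(A^s)_{\Q_l}$, which is exactly Conjecture \ref{Mumford-Tate} for $A^s$.

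There is no real obstacle here; the only point requiring slight care is compatibility of the two diagonals. Specifically, one must confirm that the diagonal embedding on the Galois side (implicit in $\rho_{l,A^s} = \mathrm{diag}(\rho_{l,A},\dots,\rho_{l,A})$) coincides, after scalar extension to $\Q_l$, with the diagonal used in the Hodge-theoretic computation of $MT(A^s)$. Both arise from the same product decomposition $A^s = \prod_{i=1}^s A$ and from the compatibility $V_l(B) \cong V(B)\otimes_\Q \Q_l$ for any abelian variety $B$, so the two diagonals coincide and the argument goes through.
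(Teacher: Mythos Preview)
Your proposal is correct and follows exactly the same approach as the paper, which simply writes ``It follows from Theorems~\ref{Mumford-Tate group standard properties} and~\ref{Zariski closure standard properties}.'' You have merely unpacked the details of how the diagonal embedding transports both sides of the Mumford-Tate equality from $A$ to $A^s$, which is precisely what those two theorems encode.
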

\begin{proof} It follows from Theorems \ref{Mumford-Tate group standard properties}  
and \ref{Zariski closure standard properties}. 
\end{proof}

\begin{remark}\label{Mumford-Tate Conj AST conjecture} 
Observe that if the Mumford-Tate conjecture holds for $A$
and $K$ is a finite extension of $F$ for which $G_{l, K}^{alg}$ is connected, then
\begin{equation}
G_{l, K, 1}^{alg}(A^s) = H(A^s)_{\Q_l}.
\label{H eq1}\end{equation}
for any $s \geq 1.$
Hence the algebraic Sato-Tate conjecture holds for $A^s$ for any $s \geq 1$ with
\begin{equation}
AST_{K} (A^s) = H (A^s).
\label{AST conj holds when MT conj and GL ALG connect }\end{equation}
\end{remark}


\section{Twisted Lefschetz groups and the algebraic Sato-Tate conjecture}

We next use the Lefschetz group to define an upper bound on the algebraic Sato-Tate group.
In those cases where the Mumford-Tate group is determined entirely by endomorphisms, this
allows us to deduce the algebraic Sato-Tate conjecture from the Mumford-Tate conjecture.
(An analogous construction for modular forms is the \emph{twisting field} appearing
in the work of Momose \cite{Mo} and Ribet \cite{R}.)
We continue to take $A$ to be an abelian variety over a number field $F$ embedded in $\C$
and $K$ to be a finite extension of $F$.
 

\begin{definition}
Note that we have a continuous representation
\begin{equation}
\rho_{e} \, : \, G_K \rightarrow Aut_{\Q} (D).
\end{equation}
Put $G_{L_{e}} := \,\, \text{Ker}\, \rho_{e},$ so that $L_{e}/K$ is a finite Galois extension.
For $\tau \in G(L_{e}/K)$, define
\begin{equation}
DL_{K}^{\tau}(A) := \{g \in Sp_{V}: \, g \beta g^{-1} = \rho_e(\tau) (\beta) \,\,\,
\forall \,\beta \in D\}.
\label{decomposable twisted Lefschetz for fixed element}\end{equation} 
It is enough to impose the condition for $\beta$ running over a $\Q$-basis of $D$;
therefore, $DL_{K}^{\tau}(A)$ is a closed subscheme of $Sp_V$ for each $\tau.$ 
\end{definition}

\begin{definition}
Define the \emph{twisted decomposable algebraic Lefschetz group} of $A$ over $K$
to be the closed algebraic subgroup of $Sp_{V,\psi}$ given by
\begin{equation}
DL_{K}(A) = \bigsqcup_{\tau \in G(L_{e}/K)} \,\, DL_{K}^{\tau}(A).
\label{decomposition into twisted Lefschetz for fixed elements}\end{equation} 
\noindent
For any subextension $L/K$ of ${\overline F}/K$ we have
$DL_L(A) \subseteq DL_K(A)$ and $DL_{L}^{id}(A) = DL_{K}^{id}(A)$.
Hence the classical Lefschetz group of $A$ can be written as:
\begin{equation}
L(A) = DL^{id}_{K}(A)^{\circ} = DL_{K}(A)^{\circ}.
\label{twisted Lefschetz}\end{equation}
and $L(A) = DL^{id}_{L}(A)^{\circ} = DL_{L}(A)^{\circ},$
for any subextension $L/K$ of ${\overline F}/K.$
In particular $DL_{L_e}^{id}(A) = DL_{L_e}(A) = 
DL_{\overline F}(A) = DL_{{\overline F}}^{id}(A)$ and
$L (A) = DL_{L_e}(A)^{\circ}= DL_{\overline F}(A)^{\circ}.$
\end{definition}

\begin{theorem}\label{Decomposible Lefschetz group standard properties} 
The twisted decomposable Lefschetz groups of abelian varieties have the following 
properties.
\begin{itemize} 
\item[1.] An isogeny $\phi : A_1 \rightarrow A_2$ of abelian 
varieties over $K$ induces an isomorphism
$DL_{K}^{\tau} (A_1) \cong DL_{K}^{\tau} (A_2)$ for every $\tau \in G(L_e /K)$, and  
consequently an isomorphism $DL_{K} (A_1) \cong DL_{K} (A_2).$
\item[2.] If $A / K$ is an abelian variety, then $DL_{K}^{\tau} (A^s) \cong DL_{K}^{\tau} (A)$
for every $\tau \in G(L_e /K).$
\item[3.] If $A = \prod_{i=1}^t A_{i}$ with $A_i / K$ simple for all $i$ and 
${\rm{Hom}}_{\overline{F}} (A_i, \, A_j) = 0$
for all $i \not= j,$ then $DL_{K}^{\tau} (A) \cong \prod_{i=1}^t \, DL_{K}^{\tau} (A_i)$
for every $\tau \in G(L_e /K).$ 
\item[4.] If $A = \prod_{i=1}^t A_{i}^{s_i}$ with $A_i / K$ for all $i$ and 
${\rm{Hom}}_{\overline{F}} (A_i, \, A_j) = 0$
for all $i \not= j,$ then $DL_{K}^{\tau} (A) \cong \prod_{i=1}^t \, DL_{K}^{\tau} (A_i)$
for every $\tau \in G(L_e /K).$  
\item[5.] If $A = \prod_{i=1}^t A_{i}^{s_i}$ with all $A_i / K$ simple, pairwise 
nonisogenous then $DL_{K}^{\tau} (A) \cong \prod_{i=1}^t \, DL_{K}^{\tau} (A_i)$
for every $\tau \in G(L_e /K).$ 
\end{itemize}
\end{theorem}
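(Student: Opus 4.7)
The plan is to prove the five parts in order, with part 1 being essentially formal, parts 2 and 3 being the technical core (both using Galois-fixed structure elements in $\End^0$), and parts 4 and 5 following by combination. Throughout, the key is that the defining condition $g\beta g^{-1} = \rho_e(\tau)(\beta)$ only needs to be checked on a $\Q$-basis of $D(A)$, and that $\rho_e(\tau)$ fixes any endomorphism already defined over $K$.

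For part 1, an isogeny $\phi: A_1 \to A_2$ induces an isomorphism $\phi_*: V(A_1) \to V(A_2)$ carrying $\psi_{A_1}$ to a scalar multiple of $\psi_{A_2}$, and hence an isomorphism of symplectic groups $\mathrm{Sp}_{V(A_1)} \cong \mathrm{Sp}_{V(A_2)}$. Conjugation $\beta \mapsto \phi \beta \phi^{-1}$ gives a Galois-equivariant isomorphism $D(A_1) \cong D(A_2)$, whence $L_e$ agrees for the two varieties and the twisted condition transports directly, yielding $DL_K^\tau(A_1) \cong DL_K^\tau(A_2)$.

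For part 2, I would identify $D(A^s) = M_s(D(A))$ and note that the matrix units $e_{ij} \in M_s(\Q) \subseteq M_s(D(A))$ are defined over $K$ (they arise from the obvious projections and inclusions of the factors), so $\rho_e(\tau)$ fixes them. For $g \in DL_K^\tau(A^s)$, the condition $g e_{ii} g^{-1} = e_{ii}$ forces $g$ to be block diagonal, and $g e_{ij} g^{-1} = e_{ij}$ then forces all diagonal blocks to be equal: $g = \mathrm{diag}(h, \ldots, h)$. Since $\psi_{A^s}$ is the orthogonal sum of $s$ copies of $\psi_A$, we have $g \in \mathrm{Sp}_{V(A^s)}$ iff $h \in \mathrm{Sp}_{V(A)}$. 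Imposing the remaining condition on a basis of $D(A) \subseteq M_s(D(A))$ (as scalar matrices) recovers exactly the condition defining $DL_K^\tau(A)$; so $h \mapsto \mathrm{diag}(h,\ldots,h)$ gives the isomorphism.

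For part 3, the hypothesis $\mathrm{Hom}_{\overline F}(A_i, A_j) = 0$ for $i \neq j$ yields the product decomposition $D(A) = \prod_{i=1}^t D(A_i)$, with the central idempotents $e_i$ being the projectors onto the factors $A_i$. Because each $A_i$ is defined over $K$, the $e_i$ lie in $\End_K(A)$ and are therefore fixed by $\rho_e$. Any $g \in DL_K^\tau(A)$ must commute with each $e_i$, hence preserves each $V(A_i)$, so $g = (g_1, \ldots, g_t)$ with $g_i \in \mathrm{Sp}_{V(A_i)}$ (using that $\psi_A = \bigoplus \psi_{A_i}$). The twisted centralizer condition, tested on the block $D(A_i) = e_i D(A) e_i$, reduces to $g_i \beta g_i^{-1} = \rho_e(\tau|_{L_{e,i}})(\beta)$ for $\beta \in D(A_i)$. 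Noting that $L_e$ for $A$ is the compositum of the $L_{e,i}$ so that $\tau$ restricts coherently, this gives the claimed product decomposition. Part 4 follows by first applying part 3 (valid since $\mathrm{Hom}_{\overline F}(A_i^{s_i}, A_j^{s_j}) = 0$ for $i \neq j$) and then part 2 to each factor $DL_K^\tau(A_i^{s_i}) \cong DL_K^\tau(A_i)$. Part 5 is an immediate specialization of part 4, since distinct simple abelian varieties that are pairwise nonisogenous have no nonzero homomorphisms between them.

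The main obstacle I anticipate is bookkeeping rather than conceptual: one must verify that the symplectic form, the endomorphism algebra, the idempotent/matrix-unit structure, \emph{and} the Galois action $\rho_e$ all decompose compatibly in each setting, and that $L_e$ and the restriction maps $\mathrm{Gal}(L_e/K) \to \mathrm{Gal}(L_{e,i}/K)$ are handled consistently so that a single $\tau$ induces the correct twist in each factor. Once these compatibilities are laid out once, parts 1--5 follow by a uniform centralizer argument.
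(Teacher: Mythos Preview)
Your proposal is correct and follows essentially the same approach as the paper's proof: in each part you use the same decompositions $(V(A^s),\psi)\cong (V(A),\psi)^s$, $D(A^s)\cong M_s(D(A))$, and $D(\prod A_i)\cong \prod D(A_i)$, and then exploit that the matrix units $e_{ij}\in M_s(\Q)$ (resp.\ the factor idempotents $e_i$) are $G_K$-fixed to force any $g\in DL_K^\tau$ into diagonal block form, exactly as the paper does when it invokes $M_s(\Q)\subseteq M_s(D(A))$ and the product decomposition. Your write-up is simply more explicit about why these structural elements are $\rho_e(\tau)$-fixed and how that yields the block structure; the paper compresses this into a sentence or two per part, but the underlying argument is the same.
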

\begin{proof}
1. An isogeny induces the isomorphism 
$(V (A_1), \psi_{V (A_1)}) \cong (V (A_2), \psi_{V (A_2)})$
of $\Q$-bilinear spaces.
It also induces an isomorphism 
$D (A_1) = {\rm{End}}_{\overline{K}} (A_1) \otimes_{\Z} \Q \cong 
D (A_2) = {\rm{End}}_{\overline{K}} (A_2) \otimes \Q$
of $\Q[G_K]$-modules, and so the first claim follows. 

\noindent
2. There is a natural isomorphism:
$$(V (A^s), \psi_{V (A^s)}) \cong (V (A), \psi_{V (A)})^s := \bigoplus_{i=1}^s (V (A), \psi_{V (A)})$$
and an isomorphism  $D(A^s) \cong 
M_{s} (D (A))$ of $\Q$-algebras. Let $\Delta$ be the homomorphism that maps 
$Sp_{(V (A), \psi_{V (A)})}$ onto 
${{\rm diag}} (Sp_{(V (A), \psi_{V (A)})}, \dots,
Sp_{(V (A), \psi_{V (A)})}) \subseteq Sp_{(\oplus_{i=1}^s (V (A), \psi_{V (A)}))}.$
Since $\Q \subseteq D(A)$, we have $M_{s} (Q) \subseteq M_{s} (D(A))$; from the 
definition of the twisted decomposable Lefschetz group, we get
$DL_{K}^{\tau} (A^s) \cong \Delta (DL_{K}^{\tau} (A)) \cong DL_{K}^{\tau} (A)$.

\noindent
3. The proof is similar to the proof of 2.\ under the observation that
$D(A) \cong \prod_{i=1}^t \, D(A_i)$ and 
$(V (A), \psi_{V (A)}) \, \cong \, \bigoplus_{i=1}^t \, (V (A_i), \psi_{V (A_i)}).$

\noindent
4. This follows immediately from 2.\ and 3. 

\noindent
5. Immediate corollary from 4.
\end{proof}

\begin{remark}\label{Lefschetz group standard properties} 
Theorem \ref{Decomposible Lefschetz group standard properties} 
remains true if we replace $DL_{K}^{\tau} (B)$ with $DL_{K}^{\tau} (B)^{\circ}$
for all abelian varieties $B$ that appear in the theorem. Since 
$L (B) = DL_{K}^{id} (B)^{\circ}$, then the Lefschetz group
satisfies properties 1-5 of Theorem 
\ref{Decomposible Lefschetz group standard properties}.
The property 5.\ of this theorem in the  case of the Lefschetz group was obtained 
previously by K. Murty \cite[Lemma 2.1]{Mu}, cf.\ \cite[Lemma B.70]{G}.
\end{remark}

\begin{remark}\label{decomposible bigger twisted Lefschetz}
Observe that we have
\begin{equation}
DL_K(A) := \{g \in Sp_{(V, \psi)}: \, \exists_{\, \tau \in G_K} \, 
\forall_{\, \beta \, \in D} \quad  
g \beta g^{-1} = \rho_{e} (\tau) (\beta) \, \}
\nonumber\end{equation} 
Changing quantifiers we get another group scheme
\begin{equation}
\widetilde{DL_K(A)} := 
\{g \in Sp_{(V, \psi)}: \, \forall_{\, \beta \, \in D} \, \exists_{\, \tau \in G_K}  
\quad g \beta g^{-1} = \rho_{e} (\tau) (\beta) \, \}
\label{decomposable bigger twisted Lefschetz1}\end{equation} 
Observe that 
$DL_K(A) \subseteq \widetilde{DL_K(A)}.$
\end{remark}

We now use the twisted Lefschetz group to obtain an upper bound on the algebraic Sato-Tate group.
\begin{remark}
Observe that (\ref{DHA and CD}) implies that
\begin{equation}
D H(A) \subseteq DL^{id}_{K}(A) \subseteq DL_K(A).
\label{DHA and DLKA}
\end{equation}
\noindent
On the other hand,
for any $P \in A({\overline F})$, any $\beta \in End_{{\overline F}} (A)$, and any 
$\sigma \in G_K$, we have $\sigma \beta \sigma^{-1} (P) = \sigma (\beta) (P).$
Hence
$\rho_{l} (G_K)_1 \subseteq DL_K(A) (\Q_l).$ In particular,
\begin{equation}
G_{l, K, 1}^{alg} \subseteq DL_K(A)_{\Q_l}.
\end{equation}
\end{remark}

We will be particularly interested in cases where 
$G_{l, K, 1}^{alg} = DL_K(A)_{\Q_l}$. We next check that this condition
does not depend on the field $K$.
\begin{remark}
Let $\tilde\tau \in G_K$ be a lift of $\tau \in G(L_{e}/K).$ The coset
$\tilde\tau \, G_{L_{e}}$ does not depend on the lift. The Zariski closure of  
$\rho_{l} (\tilde\tau \, G_{L_{e}}) = \rho_{l} (\tilde\tau) \, \rho_{l} (G_{L_{e}})$
in $Sp_{V_l}$ is  
\begin{equation}
\rho_{l} (\tilde\tau) \, G_{l, L_{e}, 1}^{alg} \subseteq DL_{K}^{\tau}(A)_{\Q_l}.
\end{equation}
\noindent
We observe that
\begin{equation}
G_{l, K, 1}^{alg}
 = \bigsqcup_{\tau \in G(L_{e}/K)} \,\, \rho_{l} (\tilde\tau) \, G_{l, L_{e}, 1}^{alg}.
\label{decomposition into cosets of algebraic closures}\end{equation} 

\noindent
Since $DL_{K}^{id}(A) = DL_{L_{e}}(A) = DL_{\overline{F}}(A)$, we get
\begin{equation}
G_{l, L_{e}, 1}^{alg} \subseteq DL_{L_{e}}(A)_{\Q_l}.
\label{GlLe1alg subset DLLeA}\end{equation} 

\noindent
Hence the following two equalities are equivalent:
\begin{align}
G_{l, L_{e}, 1}^{alg} &= DL_{L_{e}}(A)_{\Q_l},
\label{ItIsSatoTateGroupIdentification Le} \\
G_{l, K, 1}^{alg} &= DL_{K}(A)_{\Q_l}.
\label{ItIsSatoTateGroupIdentification}\end{align}
\end{remark}

\section{Some cases of the algebraic Sato-Tate conjecture}

In general, the containment $H(A) \subseteq L(A)$ can be strict, which makes the Mumford-Tate conjecture a somewhat
subtle problem; for instance, Mumford exhibited examples of simple abelian fourfolds for which $H(A) \neq L(A)$. 
Fortunately, when $A$ has a large endomorphism algebra as compared to its dimension,
such pathologies do not occur: one has $H(A) = L(A)$ and one can often show that the Mumford-Tate conjecture holds.
When this happens, we will say that the Mumford-Tate conjecture for $A$ is \emph{explained by endomorphisms}.

The following theorem asserts that in cases where the Mumford-Tate conjecture is explained by endomorphisms
\emph{and} the twisted decomposable Lefschetz group over $\overline{F}$ is connected, the algebraic Sato-Tate
conjecture is in a sense also explained by endomorphisms.
\begin{theorem}\label{conditions for AST} 
Assume that the following conditions hold.
\begin{itemize} 
\item[1.] We have $ H(A) = L(A)$.
\item[2.] We have $(G_{l, K}^{alg})^{\circ} = MT(A)_{\Q_l}.$
\item[3.] The group $DL_{\overline{F}}(A)$ is connected.
\end{itemize}
Then (\ref{ItIsSatoTateGroupIdentification}) holds for every $l.$ 
Consequently, the algebraic Sato-Tate conjecture 
(Conjecture~\ref{algebraic Sato Tate conj.})
holds for $A/K$ with
\begin{equation} 
AST_K (A) = DL_{K}(A).
\label{SatoTateGroupIdentified}
\end{equation}
\end{theorem}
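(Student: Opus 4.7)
The plan is to identify $G_{l,K,1}^{alg}$ with $DL_{K}(A)_{\Q_l}$ by combining the Mumford-Tate conjecture (hypothesis~2) with the connectedness hypothesis (hypothesis~3), passing through the auxiliary field $L_{e}$ and then invoking the equivalence (\ref{ItIsSatoTateGroupIdentification Le}) $\Leftrightarrow$ (\ref{ItIsSatoTateGroupIdentification}) already established in the excerpt.

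First I would reformulate hypothesis~2 using Remark~\ref{Mumford-Tate Conj and Deligne Theorem}, which recasts it as $(G_{l,K,1}^{alg})^{\circ} = H(A)_{\Q_l}$. Hypothesis~1 then upgrades the right-hand side to $L(A)_{\Q_l}$, and the identity $L(A) = DL_{\overline F}(A)^{\circ}$ in (\ref{twisted Lefschetz}), combined with hypothesis~3, identifies this further with $DL_{\overline F}(A)_{\Q_l} = DL_{L_e}(A)_{\Q_l}$.

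Next I would descend to the Galois extension $L_{e}/K$. Since $L_{e}/K$ is finite, the remark preceding Proposition~\ref{L0realizing conn comp} gives $(G_{l,L_e,1}^{alg})^{\circ} = (G_{l,K,1}^{alg})^{\circ}$, so the previous step yields $(G_{l,L_e,1}^{alg})^{\circ} = DL_{L_e}(A)_{\Q_l}$. Because $DL_{\overline F}(A)$ is a connected algebraic group over $\Q$ with a $\Q$-rational identity, it is geometrically connected, so its base change $DL_{L_e}(A)_{\Q_l}$ remains connected; combining this with the containment (\ref{GlLe1alg subset DLLeA}) forces
\[
G_{l,L_e,1}^{alg} = (G_{l,L_e,1}^{alg})^{\circ} = DL_{L_e}(A)_{\Q_l},
\]
which is exactly (\ref{ItIsSatoTateGroupIdentification Le}). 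The equivalence quoted above then delivers (\ref{ItIsSatoTateGroupIdentification}) over $K$.

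Finally I would verify that $AST_{K}(A) := DL_{K}(A)$ meets the demands of Conjecture~\ref{algebraic Sato Tate conj.}. The containment $DL_{K}(A) \subseteq Sp_{(V,\psi)}$ is built into its definition, and $AST_{K}(A)^{\circ} = L(A) = H(A)$ is reductive because $H(A)$ is the identity component of the kernel of the multiplier character $MT(A) \to \G_{m}$ on the reductive group $MT(A)$. The most delicate step I anticipate is not any single computation but rather the tacit use of the coset-by-coset compatibility between (\ref{decomposition into cosets of algebraic closures}) and (\ref{decomposition into twisted Lefschetz for fixed elements}) packaged inside the equivalence (\ref{ItIsSatoTateGroupIdentification Le})$\Leftrightarrow$(\ref{ItIsSatoTateGroupIdentification}); granted that, together with the preservation of connectedness under the base change $\Q \to \Q_l$, the proof is a matter of assembling previously established inclusions.
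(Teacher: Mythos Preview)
Your proposal is correct and follows essentially the same route as the paper: reduce to the field $L_e$ via the equivalence (\ref{ItIsSatoTateGroupIdentification Le})$\Leftrightarrow$(\ref{ItIsSatoTateGroupIdentification}), use Remark~\ref{Mumford-Tate Conj and Deligne Theorem} and hypotheses~1--3 to identify $(G_{l,L_e,1}^{alg})^{\circ}$ with $DL_{L_e}(A)_{\Q_l}$, and then invoke (\ref{GlLe1alg subset DLLeA}) to collapse $G_{l,L_e,1}^{alg}$ onto its identity component. Your write-up is a little more explicit than the paper's about why connectedness survives base change to $\Q_l$ and why $AST_K(A)^{\circ}$ is reductive, but these are elaborations rather than departures.
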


\begin{proof} 
It is enough to prove (\ref{ItIsSatoTateGroupIdentification Le}).
By our assumptions and Remark \ref{Mumford-Tate Conj and Deligne Theorem} we get
$(G_{l, L_{e}, 1}^{alg})^{\circ} = H (A)_{\Q_l} = L(A)_{\Q_l} = DL_{L_{e}}(A)_{\Q_l}.$ 
Hence  $DL_{L_{e}}(A)_{\Q_l}$ is also connected 
for every $l$ and by (\ref{GlLe1alg subset DLLeA}) we obtain 
$(G_{l, L_{e}, 1}^{alg})^{\circ} = G_{l, L_{e}, 1}^{alg}$ for every $l.$ 
\end{proof}

\begin{remark} Under assumptions of Theorem \ref{conditions for AST} the results of 
Theorems 
\ref{Mumford-Tate group standard properties},  
\ref{Zariski closure standard properties} and  
\ref{Decomposible Lefschetz group standard properties} 
show that the algebraic Sato-Tate conjecture holds for $A^s$ for all $s \geq 1$
with 
$AST_K (A^s) = DL_{K}(A^s) \cong DL_{K}(A) = AST_K (A).$
\end{remark}

Conversely, if the algebraic Sato-Tate conjecture for $A$ is explained by endomorphisms,
so is the Mumford-Tate conjecture.
\begin{theorem}\label{conditions for MT via AST} 
Assume that
(\ref{ItIsSatoTateGroupIdentification}) and (\ref{SatoTateGroupIdentified}) 
hold for every $l$ (so in particular, the algebraic Sato-Tate conjecture holds).
We then have the following.
\begin{itemize} 
\item[1.] We have $H(A) = L(A)$.
\item[2.] We have $(G_{l, K}^{alg})^{\circ} = MT(A)_{\Q_l}$. 
\end{itemize}
\end{theorem}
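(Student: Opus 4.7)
The plan is to run the Deligne inclusion backwards using the assumption, and then invoke the equivalence from Remark \ref{Mumford-Tate Conj and Deligne Theorem} to get the Mumford-Tate conjecture essentially for free.

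First I would take connected components in the hypothesis (\ref{ItIsSatoTateGroupIdentification})--(\ref{SatoTateGroupIdentified}). Since base change along $\Q \to \Q_l$ commutes with passage to the connected component of the identity, and since $DL_K(A)^\circ = L(A)$ by (\ref{twisted Lefschetz}), the equality $G_{l,K,1}^{alg} = DL_K(A)_{\Q_l}$ yields
\begin{equation*}
(G_{l,K,1}^{alg})^\circ \;=\; DL_K(A)^\circ_{\Q_l} \;=\; L(A)_{\Q_l}.
\end{equation*}
On the other hand, Deligne's Theorem \ref{Theorem Deligne}, in its equivalent form (\ref{Del ineq1}), gives $(G_{l,K,1}^{alg})^\circ \subseteq H(A)_{\Q_l}$, and the containment $H(A) \subseteq L(A)$ from Definition \ref{Definition Lefschetz} gives $H(A)_{\Q_l} \subseteq L(A)_{\Q_l}$. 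Stringing these together yields
\begin{equation*}
L(A)_{\Q_l} \;=\; (G_{l,K,1}^{alg})^\circ \;\subseteq\; H(A)_{\Q_l} \;\subseteq\; L(A)_{\Q_l},
\end{equation*}
so $H(A)_{\Q_l} = L(A)_{\Q_l}$. Since $\Q \to \Q_l$ is faithfully flat and $H(A) \hookrightarrow L(A)$ is a closed immersion of $\Q$-schemes which becomes an isomorphism after this base change, faithfully flat descent forces $H(A) = L(A)$, giving claim 1.

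For claim 2, the previous chain now reads $(G_{l,K,1}^{alg})^\circ = H(A)_{\Q_l}$, which is precisely the equality (\ref{H eq}). By the equivalence recorded in Remark \ref{Mumford-Tate Conj and Deligne Theorem} (obtained by a snake-lemma chase on the three-column diagram there, in which both $G_{l,K}^{alg}$ and $MT(A)_{\Q_l}$ sit in an extension of $\G_m$ by $G_{l,K,1}^{alg}$ and $DH(A)_{\Q_l}$ respectively), this is equivalent to $(G_{l,K}^{alg})^\circ = MT(A)_{\Q_l}$, i.e.\ the Mumford-Tate conjecture for $A/K$. The only mild subtlety in the whole argument is the descent from $\Q_l$ to $\Q$ in step 1, but this is automatic from faithful flatness; the rest is a direct combination of the hypothesis with results already in the paper.
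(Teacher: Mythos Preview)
Your proof is correct and follows essentially the same route as the paper's own argument: both combine Deligne's inclusion $(G_{l,K,1}^{alg})^\circ \subseteq H(A)_{\Q_l} \subseteq L(A)_{\Q_l}$ with the connected-component consequence of the hypothesis $(G_{l,K,1}^{alg})^\circ = DL_K(A)_{\Q_l}^\circ = L(A)_{\Q_l}$ to force all three groups equal, then invoke Remark~\ref{Mumford-Tate Conj and Deligne Theorem} for claim~2 and descend from $\Q_l$ to $\Q$ for claim~1. Your justification of the descent step via faithful flatness is slightly more explicit than the paper's (which simply notes that $H(A)$ is closed in $L(A)$), but the content is identical.
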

\begin{proof} 
By Theorem \ref{Theorem Deligne} and Remark  
\ref{Mumford-Tate Conj and Deligne Theorem} (see \eqref{Del ineq1}), we have
\begin{equation}
(G_{l, K, 1}^{alg})^{\circ} \subseteq H(A)_{\Q_l} \subseteq L(A)_{\Q_l} = DL_K(A)_{\Q_l}^{\circ}.
\label{GlK1 connected  and HA and LKA}
\end{equation}  
By (\ref{SatoTateGroupIdentified}) we get
\begin{equation}
(G_{l, K, 1}^{alg})^{\circ} = H(A)_{\Q_l} = L(A)_{\Q_l} = DL_K(A)_{\Q_l}^{\circ}.
\label{GlK1 connected and SatoTate gives HAl equal LKAl}
\end{equation}    
Hence by Remark \ref{Mumford-Tate Conj and Deligne Theorem}, we obtain
$(G_{l, K}^{alg})^{\circ} = MT(A)_{\Q_l}.$
Moreover, since $H(A)$ is closed in $L (A)$, 
(\ref{GlK1 connected and SatoTate gives HAl equal LKAl}) gives
\begin{equation}
H(A) = L(A)
\label{GlK1 connected and SatoTate gives HA equal LKA}
\end{equation}
as desired.
\end{proof}

\begin{remark}
Recall that $L (A) = DL_{K} (A)^{\circ} \triangleleft DL_{K}^{id} (A) 
\triangleleft DL_{K} (A).$ Consider the following epimorphism of groups:
\begin{equation}
DL_{K} (A) / L(A) \rightarrow DL_{K} (A) / DL_{K}^{id}(A) \cong G (L_e / K).
\label{DLKA to Galois Le over K}\end{equation}
If $A$ satisfies the assumptions of Theorem 
\ref{conditions for AST}, then the epimorphism 
(\ref{DLKA to Galois Le over K}) is an isomorphism. 
This gives an identification $G(L_e/K) \cong AST_K(A)/AST_K(A)^\circ$.
\end{remark}

We now assemble some cases where the Mumford-Tate and algebraic Sato-Tate conjecture are explained by endomorphisms.
We start with some cases with $A$ simple.

\begin{definition}
Put $g := \dim(A)$. We say $A$ is of \emph{CM type} if $D$ contains a commutative $\Q$-subalgebra of dimension $2g$.
If $A$ is simple, then $DL_{\overline{F}}(A)$ is a torus of dimension at most $g$; equality holds if $g \leq 3$
\cite[Example~3.7]{R}. In this case, $L(A) = DL_{\overline{F}}(A)$.
\end{definition}

\begin{theorem} \label{CM type}
Let $A/F$ be an abelian variety of CM type. Then
$A$ satisfies the conditions of Theorem~\ref{conditions for AST}, so the algebraic Sato-Tate conjecture holds with $AST_K(A) = DL_K(A)$.
\end{theorem}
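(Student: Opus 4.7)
The plan is to verify the three hypotheses of Theorem~\ref{conditions for AST} for $A$ of CM type; the theorem then delivers $AST_K(A) = DL_K(A)$ with no further work. First I would reduce to the simple case. Decomposing $A$ up to isogeny as $A \sim \prod_{i=1}^t A_i^{s_i}$ with the $A_i$ simple and pairwise non-isogenous over $\overline{F}$, we have $D \cong \prod_i M_{s_i}(D(A_i))$, so the hypothesis that $D$ contains a commutative $\Q$-subalgebra of dimension $2g$ forces each $D(A_i)$ to contain a commutative $\Q$-subalgebra of dimension $2\dim A_i$; in other words, each simple factor $A_i$ is itself of CM type. Theorems~\ref{Mumford-Tate group standard properties}, \ref{Zariski closure standard properties}, and \ref{Decomposible Lefschetz group standard properties} then let me assemble $MT(A)$, $H(A)$, $L(A)$, $DL_{\overline{F}}(A)$, and $(G_{l,K}^{alg})^{\circ}$ from the corresponding groups attached to the simple factors, so it suffices to check conditions (1)--(3) of Theorem~\ref{conditions for AST} assuming $A$ is simple and $D = E$ is a CM field of degree $2g$.

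In the simple case, $V$ is a one-dimensional $E$-vector space, and the Riemann form $\psi$ satisfies $\psi(ex,y) = \psi(x,\overline{e}y)$ for the CM involution of $E$. For condition (3), since $L_e \subseteq \overline{F}$ the Galois group $G(L_e/\overline{F})$ is trivial, so $DL_{\overline{F}}(A) = DL^{\mathrm{id}}_{\overline{F}}(A) = C_E(Sp_{V,\psi})$. Using $C_E(GL_V) = \mathrm{Res}_{E/\Q}\G_m$, this centralizer is identified with the norm-one subtorus $E^1 \subseteq \mathrm{Res}_{E/\Q}\G_m$, which is connected. For condition (1), I would invoke the classical theory of CM abelian varieties (Pohlmann, Shimura--Taniyama): $MT(A)$ is a subtorus of $\mathrm{Res}_{E/\Q}\G_m$, and a direct computation gives $H(A) = MT(A) \cap Sp_{V,\psi} = E^1 = L(A)$. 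For condition (2), the Mumford--Tate conjecture for CM abelian varieties is a classical result of Serre--Tate, a consequence of the main theorem of complex multiplication: the image of $\rho_l$ lies in $MT(A)(\Q_l)$, and the Zariski closure of the image of an open subgroup of $G_K$ recovers all of $MT(A)_{\Q_l}$.

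I expect the main technical obstacle to lie not in any single ingredient but in the bookkeeping of the reduction to the simple case, especially when the simple factors $A_i$ have distinct CM fields. The key point is to verify that each of the five groups listed above breaks compatibly as a product over the pairwise non-isogenous simple factors (or as a diagonal embedding inside the $A_i^{s_i}$ factors); once this compatibility is established, the three conditions for $A$ follow factor-by-factor from the simple CM case, and Theorem~\ref{conditions for AST} finishes the argument.
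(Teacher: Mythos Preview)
The paper's own proof is a one-line citation to Serre \cite{Se77} (and Pohlmann), so your detailed outline is not the paper's route; it is an attempt to unpack that citation. Two of your steps do not go through as written.

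\textbf{The reduction to simple factors.} You invoke Theorems~\ref{Mumford-Tate group standard properties} and~\ref{Zariski closure standard properties} to ``assemble $MT(A)$, $H(A)$, $(G_{l,K}^{alg})^{\circ}$ from the corresponding groups attached to the simple factors,'' but those theorems only give isogeny invariance and the power case $A^s$; they say nothing about products of non-isogenous factors. Indeed, the paper explicitly warns just before Theorem~\ref{Mumford-Tate group standard properties} that Mumford--Tate and Hodge groups ``do not behave well in general with respect to products of abelian varieties.'' Only Theorem~\ref{Decomposible Lefschetz group standard properties} gives a genuine product decomposition, and only for $DL_K$ and $L$. So your plan to verify conditions (1) and (2) factor-by-factor is not supported by the results you cite. (Condition (2) can in fact be handled for $A$ itself, without any reduction, since Serre's theorem in \cite{Se77} already treats arbitrary CM abelian varieties.)

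\textbf{The equality $H(A)=E^1$ in the simple case.} Your ``direct computation'' only shows $H(A)=MT(A)\cap Sp_{(V,\psi)}\subseteq C_E(Sp_{(V,\psi)})=E^1=L(A)$. Getting the reverse inclusion is not automatic: the Mumford--Tate group of a CM abelian variety is the $\Q$-Zariski closure of the image of the cocharacter $\mu_\Phi$, and its size genuinely depends on the CM type $\Phi$, not just on $E$. Showing that $MT(A)\cap Sp_{(V,\psi)}$ fills out all of $E^1$ is exactly the content that has to come from the Pohlmann/Serre input, not from Shimura--Taniyama; it is not a formal consequence of $MT(A)\subseteq\mathrm{Res}_{E/\Q}\G_m$. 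Without this, condition~(1) of Theorem~\ref{conditions for AST} is unproved, and the same difficulty recurs (compounded) in the non-simple case, where one would additionally need $H(A)$ to equal the full product $\prod_i L(A_i)$ rather than a proper subtorus.
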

\begin{proof}
This was shown by Serre using results of Pohlman; see \cite{Se77}.
\end{proof}

\begin{lemma}\label{BGKCDConn} 
Let $A/F$ be an absolutely simple abelian variety
for which the endomorphism algebra $D$ is of type I, II, or III in the Albert classification.
Then $C_{D} Sp_{(V, \psi)}$ is connected. 
\end{lemma}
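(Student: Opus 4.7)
The plan is to identify $C_D\,\mathrm{Sp}_{(V,\psi)}$ with the unitary group of a suitable sesquilinear form over $(D,*)$, and then verify connectedness via the classification of classical groups of isometries. Let $F_0$ denote the center of $D$: in Albert types I, II, and III, $F_0$ is a totally real number field and the Rosati involution $*$ is of the first kind (i.e.\ restricts to the identity on $F_0$).

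First I would descend $\psi$ to $F_0$. Using the nondegenerate trace $\mathrm{Tr}_{F_0/\mathbb{Q}}$, write $\psi = \mathrm{Tr}_{F_0/\mathbb{Q}}\circ\psi_0$ for a unique $F_0$-bilinear alternating form $\psi_0$ still satisfying the $*$-adjunction; using the nondegenerate reduced trace $\mathrm{trd}_{D/F_0}$, construct the sesquilinear $D$-valued form $h\colon V\times V\to D$ characterised by $\mathrm{trd}_{D/F_0}(\alpha\,h(v,w)) = \psi_0(\alpha v,w)$. A routine check shows that $h$ is skew-Hermitian with respect to $*$, and the construction yields a canonical isomorphism $C_D\,\mathrm{Sp}_{(V,\psi)}\cong\mathrm{Res}_{F_0/\mathbb{Q}} U(V,h)$, where $U(V,h)$ is the unitary group of $h$ viewed as an $F_0$-group scheme. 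Since Weil restriction preserves connectedness, it suffices to prove that $U(V,h)$ is geometrically connected.

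Next I would treat the three Albert types in turn. In Type I one has $D=F_0$ with $*$ trivial, whence $h$ is alternating and $U(V,h)=\mathrm{Sp}(V,h)$ is visibly connected. In Type II, $D$ is an indefinite quaternion algebra and $*$ is of orthogonal type: base-changing to a splitting field of $D$ and applying Morita equivalence converts the skew-Hermitian $h$ into an alternating form on the multiplicity module, so $U(V,h)$ becomes an $F_0$-form of $\mathrm{Sp}$, hence connected. In Type III, $D$ is a definite quaternion algebra and $*$ is the canonical symplectic involution, and one must identify $U(V,h)$ with an $F_0$-form of the connected classical group whose complexification is $SO_{2m}$.

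The hard part will be Type III. Over a splitting field, Morita equivalence at first attaches to $h$ a nondegenerate symmetric bilinear form on the multiplicity module, which naively makes $U(V,h)$ look like the full orthogonal group with two geometric components. To rule out the nonidentity component, I would invoke the reduced Pfaffian on $\mathrm{End}_D(V)$ equipped with the involution of symplectic type induced by $*$ on $D$: the compatibility between $D$-linearity and the skew-Hermitian condition forces the symmetric-form determinant of any element of $U(V,h)$ to be a square, so $U(V,h)$ lies in the special orthogonal subgroup of $O_{2m}$ and is therefore connected.
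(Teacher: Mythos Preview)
The paper does not argue directly: it simply cites \cite[\S 7]{BGK1} for types I and II and \cite[\S 5]{BGK2} for type III. Your proposal is therefore a genuinely different, self-contained route, and for types I and II it is correct. In those cases the adjoint involution $\tau$ on $E := {\rm End}_D(V)$ induced by $\psi$ is symplectic (type rule: $\mathrm{type}(\tau)\cdot\mathrm{type}(*) = \mathrm{type}(\sigma_\psi) = -1$, and $*$ is trivial or orthogonal), so $U(V,h)\cong{\rm Iso}(E,\tau)$ is an $F_0$-form of a symplectic group and hence connected.

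Your Pfaffian step in type III, however, does not go through. Here $*$ is the canonical (symplectic) involution on $D$, so the same type rule makes $\tau$ \emph{orthogonal} on $E$. Concretely, after splitting $D$ and applying Morita one has $V\otimes\overline{F_0}\cong W\otimes\overline{F_0}^{\,2}$ with $\dim W = 2m$ and $\psi = B\otimes\omega$ for a \emph{symmetric} form $B$ on $W$; hence $(C_D\,Sp_{(V,\psi)})_{\overline{F_0}}\cong O(W,B)=O_{2m}$, which already contains reflections of determinant $-1$. The reduced Pfaffian you invoke lives on the $\tau'$-\emph{symmetric} locus for the conjugate-transpose involution $\tau'$ on $M_m(D)$, and a general $g\in U(V,h)$ is not $\tau'$-symmetric, so there is no algebraic identity forcing $\mathrm{Nrd}_E(g)=1$ over $\overline{F_0}$. (On $F_0$-points one does get $\mathrm{Nrd}_E(g)=1$ because the reduced norm from a \emph{definite} quaternion algebra is totally positive; this is why the real Lie group $SO^{*}(2m)$ is connected, but connectedness of the algebraic group is a geometric condition and the non-identity component is nonempty over $\overline{F_0}$.) This is precisely the type-D PEL situation, where the isometry group is a $\Q$-form of $O_{2m}$ with component group of order $2$. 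Your argument cannot close this case as written; you should look at \cite[\S 5]{BGK2} directly to see exactly what is proved there and how it is used in Theorem~\ref{BGK123}.
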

\begin{proof} 
This follows from \cite[\S 7]{BGK1} in the type I and II cases and \cite[\S 5]{BGK2} in the type III case.
\end{proof}

\begin{theorem}\label{BGK123} Let $A/F$ be an absolutely simple abelian variety of dimension $g$
for which the endomorphism algebra $D$ is of type I, II, or III in the Albert classification.
Let $E$ be the center of $D$ and put $e := [E:\, \Q]$, $d^2 = [D:E]$.
Assume that $\frac{g}{d e}$ is odd. Then $A$ satisfies the conditions of Theorem~\ref{conditions for AST}, 
so the algebraic Sato-Tate conjecture holds with $AST_K(A) = DL_K(A)$.
\end{theorem}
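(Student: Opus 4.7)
The plan is to verify the three hypotheses of Theorem~\ref{conditions for AST} for $A$, from which the conclusion follows directly by invoking that theorem.

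First I would dispose of condition (3), the connectedness of $DL_{\overline{F}}(A)$. Since every geometric endomorphism of $A$ is by definition defined over $\overline{F}$, the field $L_e$ attached to $K$ satisfies $L_e \subseteq \overline{F}$, so $G(L_e/\overline{F})$ is trivial. The decomposition \eqref{decomposition into twisted Lefschetz for fixed elements}, applied over $\overline{F}$, therefore collapses to $DL_{\overline{F}}(A) = DL_{\overline{F}}^{id}(A) = C_D(Sp_{(V,\psi)})$, as already recorded in the paper. Lemma~\ref{BGKCDConn} asserts that this centralizer is connected in types I, II, and III; hence $DL_{\overline{F}}(A) = L(A)$ and condition (3) is verified.

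Next I would deduce conditions (1) and (2) from the work of the first author with Gajda and Kraso\'n. Under the parity hypothesis that $g/(de)$ is odd, \cite{BGK1} (for types I and II) and \cite{BGK2} (for type III) establish both the equality $H(A) = L(A)$ of the Hodge and Lefschetz groups, and the Mumford-Tate conjecture $(G_{l}^{alg})^{\circ} = MT(A)_{\Q_l}$. Since the connected component $(G_{l,K}^{alg})^{\circ} = (G_{l,F}^{alg})^{\circ}$ is invariant under finite extensions (as recorded in \S\ref{section:Zariski closure}), condition (2) holds for $A/K$ as well. Having verified all three hypotheses, Theorem~\ref{conditions for AST} yields the algebraic Sato-Tate conjecture for $A/K$ with $AST_K(A) = DL_K(A)$.

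The hard part of the argument is not in the present proof but is contained in \cite{BGK1,BGK2}: the parity condition on $g/(de)$ is precisely what is needed to rule out exceptional Hodge cycles and force the Mumford-Tate group to coincide with the full centralizer of $D$ in $GSp_{(V,\psi)}$. Once this identification is in hand, the comparison with the $\ell$-adic monodromy proceeds via Faltings' isogeny theorem together with the Tannakian inputs on minuscule representations used in \cite{BGK1,BGK2}. The present theorem is then essentially a packaging of those inputs through the twisted decomposable Lefschetz formalism of \S 4.
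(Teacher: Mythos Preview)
Your proposal is correct and follows essentially the same approach as the paper: both invoke Lemma~\ref{BGKCDConn} for condition~(3) and cite \cite{BGK1,BGK2} for conditions~(1) and~(2), with the observation that these conditions are insensitive to the choice of $K$. Your write-up simply spells out a few details (e.g., why $DL_{\overline{F}}(A) = C_D(Sp_{(V,\psi)})$) that the paper leaves implicit.
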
 
\begin{proof}

Recall that $MT (A), \, (G_{l, K,}^{alg})^{\circ}, \,
(G_{l, K, 1}^{alg})^{\circ}$ are independent of the field extension
$K/F.$ We may thus take $K$ large enough that $G_{l,K}^{alg}$ is connected. In this case,
conditions 1 and 2 of Theorem~\ref{conditions for AST} are
established in \cite{BGK1} (in the type I and II case) and \cite{BGK2}
(in the type III case). Condition 3 holds by Lemma~\ref{BGKCDConn}.
\end{proof}

One can also show that the Mumford-Tate conjecture and the algebraic Sate-Tate conjecture 
are explained by endomorphisms
for all abelian varieties of dimension
at most 3. This result is the starting point of the analysis of the Sato-Tate conjecture for abelian
surfaces given in \cite{FKRS}; the corresponding analysis for threefolds has not yet been carried out.
\begin{lemma} \label{reducible cases}
For $i=1,2$, let $A_i/F$ be an abelian variety satisfying the Mumford-Tate conjecture.
Suppose that over $\overline{F}$, $A_1$ has no factors of type IV
while $A_2$ either is of CM type or has no factors of type IV.
Suppose in addition over $\overline{F}$, there is no nontrivial homomorphism from $A_1$ to $A_2$.
Then $A = A_1 \times A_2$ also satisfies the Mumford-Tate conjecture and $H(A) = H(A_1) \times H(A_2)$. 
\end{lemma}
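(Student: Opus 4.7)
Since $V(A) \cong V(A_1) \oplus V(A_2)$ as polarized rational Hodge structures, the Hodge cocharacter $\mu_{\infty, A}$ decomposes as the direct sum of the $\mu_{\infty, A_i}$. By minimality this gives $MT(A) \subseteq MT(A_1) \times MT(A_2)$, and the projection of $MT(A)$ onto either factor is surjective because its image is an algebraic subgroup containing $\mu_{\infty, A_i}(\C^\times)$ and so contains $MT(A_i)$. Intersecting with the special linear groups and passing to connected components yields $H(A) \subseteq H(A_1) \times H(A_2)$, again with each projection surjective. My aim is to upgrade this inclusion to an equality; the Mumford--Tate conjecture for $A$ will then follow from the conjecture for each factor.

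By Goursat's lemma, $H(A)$ is a fiber product $H(A_1) \times_Q H(A_2)$ over some common connected quotient $Q$ of the two factors, and I must show $Q$ is trivial. The hypothesis that $A_1$ has no type IV factors makes $H(A_1)$ semisimple. If $A_2$ is of CM type, $H(A_2)$ is an algebraic torus, and a connected group cannot simultaneously be a quotient of a semisimple group and of a torus unless it is trivial, ruling out any nontrivial $Q$. If instead $A_2$ also has no type IV factors, then $H(A_2)$ is semisimple and a nontrivial $Q$ would yield a common simple factor of $H(A_1)$ and $H(A_2)$. At this point I invoke the assumption ${\rm Hom}_{\overline{F}}(A_1, A_2) = 0$: via the identification of Hodge classes in $V(A_1)^{\ast} \otimes V(A_2)$ with ${\rm Hom}_{\overline{F}}(A_1, A_2) \otimes \Q$ (using that the endomorphism algebra of each $A_i$ over $\overline{F}$ agrees with the one computed over $\C$), such a common simple factor acting faithfully on both $V(A_i)$ produces a nonzero morphism of rational Hodge structures $V(A_1) \to V(A_2)$, contradicting the vanishing hypothesis.

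Once $H(A) = H(A_1) \times H(A_2)$ is established, the analogous product structure $MT(A) = MT(A_1) \times_{\G_m} MT(A_2)$ over the similitude character follows. For the Mumford--Tate conjecture for $A$, Theorem~\ref{Theorem Deligne} gives $(G_l^{alg}(A))^\circ \subseteq MT(A)_{\Q_l}$, while the Mumford--Tate conjecture for each $A_i$ combined with Theorem~\ref{Zariski closure standard properties} gives surjective projections $(G_l^{alg}(A))^\circ \to (G_l^{alg}(A_i))^\circ = MT(A_i)_{\Q_l}$. Repeating the Goursat argument---which is purely group-theoretic and in which the no-Hom hypothesis is unaffected by base change to $\Q_l$---then forces equality and completes the proof.

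The principal obstacle is converting an abstract common simple factor of $H(A_1)$ and $H(A_2)$ into a genuine intertwiner $V(A_1) \to V(A_2)$ of Hodge structures. The delicacy is that the isomorphism between the two simple factors is a priori only an abstract group isomorphism, and does not immediately pair up pieces of $V(A_1)$ with pieces of $V(A_2)$. To bridge the gap one has to use the structure theory of Hodge groups of abelian varieties without type IV factors, where the representation of $H(A_i)$ on $V(A_i)$ is constrained by the Albert type of the endomorphism algebra, so that a shared simple factor of the derived groups is compelled to manifest as a genuine isomorphism between isotypic components, producing a nonzero Hodge class in $V(A_1)^{\ast} \otimes V(A_2)$.
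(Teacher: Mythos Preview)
Your overall strategy matches the paper's: both cases split via a Goursat-type analysis of a subgroup of $H(A_1) \times H(A_2)$ surjecting onto each factor, with semisimplicity of $H(A_1)$ (from the absence of type IV factors) disposing of the CM case immediately and reducing the non-CM case to producing a forbidden intertwiner. The paper packages the non-CM Hodge step by citing Hazama \cite[Prop.~1.8]{Ha2} rather than spelling out the representation-theoretic ``structure theory'' you allude to; your discussion of the principal obstacle is essentially a sketch of Hazama's argument, and the paper's CM argument (following Imai, via the almost-direct product $G' \cdot T$) is a repackaging of your semisimple-versus-torus observation.

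There is, however, one genuine omission in your $l$-adic step for the non-CM case. You write that the Goursat argument ``is purely group-theoretic and the no-Hom hypothesis is unaffected by base change to $\Q_l$,'' but this elides the key arithmetic input. Repeating Goursat on the $l$-adic side with a nontrivial common quotient yields a nonzero $(G_{l,K,1}^{alg})^{\circ}$-equivariant map $V_l(A_1) \to V_l(A_2)$. To derive a contradiction from ${\rm Hom}_{\overline{F}}(A_1, A_2) = 0$, you must know that such a map arises from an actual homomorphism of abelian varieties. On the Hodge side this is automatic (Hodge classes in $V(A_1)^{\ast} \otimes V(A_2)$ correspond to homomorphisms over $\overline{F}$), but on the $l$-adic side it is exactly Faltings's proof of the Tate conjecture \cite{F}. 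The paper invokes Faltings explicitly at this point; you should too. Without it, the phrase ``unaffected by base change to $\Q_l$'' is not justified: the no-Hom hypothesis concerns homomorphisms of abelian varieties, not $G$-equivariant maps between Tate modules, and bridging that gap is the whole content of Faltings's theorem.
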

\begin{proof}
Let $G, G_1, G_2$ be the groups $(G_{l,F,1}^{alg})^\circ$ associated to $A, A_1, A_2$.
By hypothesis, we have $G_1 = H(A_1)_{\Q_l}$ and $G_2 = H(A_2)_{\Q_l}$.
Since $A_1$ has no factors of type IV, $G_1$ is semisimple \cite[Lemma~1.4]{Ta}.
Similarly, if $A_2$ has no factors of type IV, then $G_2$ is semisimple; if instead $A_2$
is of CM type, then $G_2$ is a torus.

There is a natural inclusion $G \subseteq G_1 \times G_2$ with the property that the induced projection
maps $\pi_1: G \to G_1$ and $\pi_2: G \to G_2$ are surjective.
Since $G$ is reductive, we can write $G$ as an almost direct
product $G'\cdot T$ with $G'$ semisimple and $T$ a torus.

In case $A_2$ is of CM type, we argue following Imai \cite{I}. In this case, 
$p(G') = p([G,G]) = [p(G), p(G)] = G_1$ because $G_1$ is semisimple, while
$q(T) = q(G) = G_2$ because $G'$ has no nontrivial character.
Hence $\dim(G) \geq \dim(G_1 \times G_2)$, forcing $G = G_1 \times G_2$.
By the same reasoning, $H(A_1) = H(A_1) \times H(A_2)$.

In case $A_2$ has no factors of type IV, then an argument of Hazama \cite[Proposition~1.8]{Ha2}
shows that $H(A) = H(A_1) \times H(A_2)$. The same argument implies that
 if $G \neq G_1 \times G_2$, there is a nonzero $G$-equivariant homomorphism $V_l(A_1) \to V_l(A_2)$. 
However, the latter would  imply the existence of a nonzero homomorphism $A_1 \to A_2$ of abelian varieties
over $\overline{F}$,
by virtue of Faltings's proof of the Tate conjecture for abelian varieties over number fields
\cite{F}. 

In both cases, we conclude $G = H(A)_{\Q_l} = H(A_1)_{\Q_l} \times H(A_2)_{\Q_l}$, as desired.
\end{proof}

\begin{theorem} \label{Algebraic Sato-Tate for surfaces and threefolds}
Let $A/F$ be an abelian variety of dimension at most $3$. Then $A$ satisfies the conditions of Theorem~\ref{conditions for AST}, so the Mumford-Tate conjecture holds with $H(A) = L(A)$, and the algebraic Sato-Tate conjecture holds with $AST_K(A) = DL_K(A)$.
\end{theorem}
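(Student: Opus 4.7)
The plan is to reduce to absolutely simple factors and then tabulate by the Albert classification. First, I would combine Theorems \ref{Mumford-Tate group standard properties}, \ref{Zariski closure standard properties}, and \ref{Decomposible Lefschetz group standard properties} to observe that all three conditions of Theorem \ref{conditions for AST} (the equality $H=L$, the Mumford--Tate conjecture for $A$, and connectivity of $DL_{\overline{F}}(A)$) are preserved under isogeny and under passage to powers $A^s$. After an isogeny, one may therefore write $A \sim \prod_{i=1}^{t} A_i^{s_i}$ with the $A_i$ absolutely simple, pairwise nonisogenous over $\overline{F}$, and each of dimension at most $3$.

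Next I would peel off the factors using Lemma \ref{reducible cases}. The lemma requires that, of the two factors being combined, at most one has an absolutely simple component of type IV which is not of CM type. In our range, the Albert classification restricts the possible type IV cases sharply: for an absolutely simple $A_i$ of dimension $g \in \{1,2,3\}$ of type IV with $E$ the center of $D$ and $E$ of degree $2e_0$, the divisibility $e_0 d^2 \mid g$ forces either $A_i$ of CM type, or $\dim A_i \in \{2,3\}$ with $D=E$ imaginary quadratic (the non-CM cases). Since $t \le 3$, one can order the factors so that Lemma \ref{reducible cases} applies step by step, reducing the verification to the case of a single absolutely simple $A_i$ once we have checked that each such $A_i$ satisfies the Mumford-Tate conjecture and that $H(A_i) = L(A_i)$.

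It then remains to verify the three conditions of Theorem \ref{conditions for AST} for every absolutely simple abelian variety of dimension $\le 3$. The CM case is handled by Theorem \ref{CM type}. For an absolutely simple $A$ whose endomorphism algebra is of Albert type I, II, or III, I would apply Theorem \ref{BGK123}: a direct check shows that $g/(ed) \in \{1,3\}$ in all admissible cases within our range ($g \le 3$), in particular always odd. The leftover cases are (i) absolutely simple surfaces with $D = \Q$ and threefolds with $D = \Q$, which are type I with $e = d = 1$ and thus handled by Theorem \ref{BGK123} since $g/ed \in \{2,3\}$ is odd exactly when $g = 3$; the genuinely outstanding case is the non-CM simple abelian surface with $D = \Q$, for which $H(A) = L(A) = Sp_4$ by a classical result of Serre giving the Mumford--Tate conjecture and the maximality of the image of Galois, and connectivity of $DL_{\overline{F}}(A) = Sp_4$ is immediate, and (ii) the non-CM type IV simple varieties of dimension $2$ or $3$ with $D = E$ an imaginary quadratic field, for which the Mumford--Tate conjecture is due to Tankeev/Ribet and the centralizer $C_D\, Sp_{(V,\psi)}$ is the unitary similitude group determined by $E$ acting on $V$, hence connected and equal to both $H(A)$ and $L(A)$.

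The main obstacle is the bookkeeping across the Albert classification and, concretely, the residual cases not covered by Theorems \ref{CM type} or \ref{BGK123}: the non-CM simple abelian surface with trivial endomorphisms and the non-CM type IV simple varieties of dimension $2$ and $3$. For these, one must import the appropriate known instances of the Mumford--Tate conjecture and verify connectivity of $DL_{\overline{F}}(A)$ by identifying it with a specific classical group, after which the reduction via Lemma \ref{reducible cases} and the isogeny/power stability from the first paragraph close the argument.
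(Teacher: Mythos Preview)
Your overall architecture---reduce to absolutely simple factors, verify the three conditions for each, then reassemble---matches the paper's. The case analysis for simple factors is also broadly parallel, though the paper is more economical: it invokes \cite[Theorem~0.1]{MZ} once to get $H(A)=L(A)$ in all cases under consideration, and for simple $A$ of dimension $2$ or $3$ it cites \cite[Concluding Remark]{Ch} for the Mumford--Tate conjecture and Tankeev via \cite[Theorem~2.7]{MZ} for connectedness of $DL_{\overline F}(A)$, rather than separating out Theorem~\ref{BGK123}, the $D=\Q$ surface, and the non-CM type~IV cases as you do. Your patchwork is not wrong, but it is messier and the sentence in which you first assert that $g/(de)\in\{1,3\}$ ``in all admissible cases'' and then immediately retract it for the surface with $D=\Q$ should be cleaned up.

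There is, however, a genuine gap in your reassembly step. You paraphrase Lemma~\ref{reducible cases} as requiring that ``at most one has an absolutely simple component of type~IV which is not of CM type,'' but that is not the hypothesis: the lemma requires that $A_1$ have \emph{no} type~IV factors whatsoever (CM or not), while $A_2$ must either be of CM type or have no type~IV factors. Consequently your claim that ``one can order the factors so that Lemma~\ref{reducible cases} applies step by step'' fails already for $A=E_1\times E_2$ with $E_1,E_2$ nonisogenous CM elliptic curves: both are type~IV, so neither can serve as $A_1$. More generally, any product with two or more type~IV factors resists this iterative use of the lemma. The paper does not attempt this: it treats all products of elliptic curves in one stroke by citing Imai \cite{I}, and reserves Lemma~\ref{reducible cases} for the single remaining shape $E\times B$ with $B$ a simple surface. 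You need either to invoke Imai for the elliptic-curve products, or at minimum to observe that when every factor is CM the whole variety is CM and Theorem~\ref{CM type} applies to the product directly; as written, your reassembly is incomplete.
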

\begin{proof}
Since we are free to enlarge $F$, we may assume that all simple factors of $A$
are absolutely simple.
We may invoke \cite[Theorem~0.1]{MZ} to establish condition 1 of Theorem~\ref{conditions for AST} in all cases
we are considering,
so we focus on conditions 2 and 3. 

We start with the case where $A$ is simple.
If $A$ is a CM elliptic curve, it is straightforward to check condition 2 (or one can apply Theorem~\ref{CM type})
and condition 3 holds because $DL_{\overline{F}}(A)$ is a torus.
If $A$ is a non-CM elliptic curve, condition 2 follows from Serre's open image theorem or from Theorem~\ref{BGK123},
and condition 3 holds because $DL_{\overline{F}}(A) = SL_2$.
In the other cases, the dimension is prime, so we may deduce condition 2 from \cite[Concluding Remark]{Ch}
and condition 3 from a theorem of Tankeev \cite[Theorem~2.7]{MZ}.

We next consider the case where $A$ is not simple. 
By Theorem~\ref{Decomposible Lefschetz group standard properties},
condition 3 for $A$ reduces to the corresponding condition for each of the simple factors of $A$. We thus need only check condition 2.
In case $A$ is isogenous to a product of elliptic curves, this is a result of Imai \cite{I}.
The only other possibility is that $A$ is isogenous to the product of an elliptic curve $E$ with a simple
abelian surface $B$; this case may be treated using Lemma~\ref{reducible cases}.
\end{proof}

\begin{remark}
As noted earlier, Theorem~\ref{Algebraic Sato-Tate for surfaces and threefolds} cannot be extended to dimension 4,
due in part to Mumford's examples of absolutely simple abelian fourfolds for which the Hodge conjecture is not
explained by endomorphisms. However, there are also some nonsimple examples with the same property.
For a complete classification of Hodge groups for abelian varieties of dimension at most 5, see \cite{MZ}.
\end{remark}


\section{Category of motives for absolute Hodge cycles}

Even for abelian varieties for which the Mumford-Tate conjecture is not explained by endomorphisms, one can 
infer from Serre \cite{Se94} a proposed construction of the general algebraic Sato-Tate group
in terms of motivic Galois groups and absolute\footnote{Since we consider only the motives associated to abelian varieties, the adjective \emph{absolute} is rendered unnecessary by Deligne's theorem equating Hodge cycles
with absolute Hodge cycles \cite{D1}. However, we prefer to speak of \emph{absolute Hodge cycles} in order
to emphasize the action of Galois on them, which is crucial for our construction of the 
disconnected part of the algebraic Sato-Tate group.} Hodge cycles \cite{D1, D, DM}. In the remainder of the paper,
we make the $\ell$-adic interpretation of this construction explicit, and show that if a suitably
motivic form of the Mumford-Tate conjecture holds for a particular abelian variety, then the algebraic
Sato-Tate conjecture holds as well.

To begin with, we recall some terminology and set some notation concerning Tannakian categories.
Let $\mathcal{C}$ be a unital, $\Q$-linear Tannakian category equipped 
(as in \cite[Definition~2.19]{DM}) with a $\Q$-linear, faithful tensor functor
$\omega\, :\, \mathcal{C} \rightarrow {\rm Vec}_{\Q}$,  called the \emph{fiber functor}
of $\mathcal{C}$.

\begin{definition}
For $X, Y \in {{\rm obj}} (\mathcal{C})$,
let ${\underline{\rm Hom}}_{\mathcal{C}}(X,\, Y) \in {{\rm obj}} (\mathcal{C})$ 
be the object 
which represents the functor 
\begin{equation}
\mathcal{C}^{\circ} \rightarrow {\bf{{\rm Set}}}, \qquad
T \mapsto {\rm Hom}_{\mathcal{C}} (T \otimes X, \,Y).
\nonumber\end{equation}
That is, for every $T \in {{\rm obj}} (\mathcal{C})$ there is a natural bijection
$${\rm Hom}_{\mathcal{C}}(X \otimes T,\, Y) \cong 
{\rm Hom}_{\mathcal{C}}(T, {\underline{\rm Hom}}_{\mathcal{C}}(X,\, Y)).$$  
One puts $X^{\vee} := {\underline{\rm Hom}}_{\mathcal{C}}(X,\, \underline{1})$
and one gets ${\underline{\rm Hom}}_{\mathcal{C}}(X,\, Y) \cong X^{\vee} \otimes Y.$ 
\end{definition}
\medskip

\begin{proposition} {\rm(\cite[Proposition~1.9]{DM})} For  
a functor $F \, :\, \mathcal{C} \rightarrow \mathcal{C}^{\prime}$ of unital Tannakian categories
and $X, Y \in {{\rm obj}} (\mathcal{C})$,
there is a natural isomorphism
\begin{equation}
F ({\underline{\rm Hom}}_{\mathcal{C}}(X,\, Y)) \cong {\underline{\rm Hom}}_{\mathcal{C}^{\prime}}(F(X),\, F(Y)).
\label{F preserves underline Hom}\end{equation}
\end{proposition}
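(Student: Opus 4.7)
The plan is to reduce the statement to the fact that a tensor functor between rigid Tannakian categories preserves duals. The text has just recorded the identification
$$
{\underline{\rm Hom}}_{\mathcal{C}}(X,\,Y) \;\cong\; X^{\vee}\otimes Y,
$$
and the analogous identification in $\mathcal{C}'$. So my first step would be to rewrite both sides of the desired isomorphism in terms of duals and tensor products: the left-hand side becomes $F(X^{\vee}\otimes Y)$, and the right-hand side becomes $F(X)^{\vee}\otimes F(Y)$.

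Next, because $F$ is a tensor functor, it comes equipped with a natural isomorphism $F(A\otimes B)\cong F(A)\otimes F(B)$ compatible with associativity and unit constraints. Applying this with $A=X^{\vee}$ and $B=Y$ reduces the problem to producing a natural isomorphism $F(X^{\vee})\cong F(X)^{\vee}$ and checking its compatibility with the tensor structure.

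The key step, and what I would expect to be the main technical point, is this preservation of duals. In a rigid category the dual of $X$ is characterized by the existence of evaluation and coevaluation maps $\mathrm{ev}\colon X^{\vee}\otimes X\to \underline{1}$ and $\mathrm{coev}\colon \underline{1}\to X\otimes X^{\vee}$ satisfying the triangle identities. Applying $F$ to these morphisms and using the compatibility of $F$ with $\otimes$ and with $\underline{1}$, I would transport them to morphisms $F(X^{\vee})\otimes F(X)\to \underline{1}_{\mathcal{C}'}$ and $\underline{1}_{\mathcal{C}'}\to F(X)\otimes F(X^{\vee})$ still satisfying the triangle identities. By the uniqueness of duals in a rigid category, this yields a canonical isomorphism $F(X^{\vee})\cong F(X)^{\vee}$, natural in $X$.

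Finally, I would chain the isomorphisms
$$
F({\underline{\rm Hom}}_{\mathcal{C}}(X,Y))\;\cong\;F(X^{\vee}\otimes Y)\;\cong\;F(X^{\vee})\otimes F(Y)\;\cong\;F(X)^{\vee}\otimes F(Y)\;\cong\;{\underline{\rm Hom}}_{\mathcal{C}'}(F(X),F(Y)),
$$
each of which is natural in $X$ and $Y$, to obtain the asserted natural isomorphism. Alternatively, one could bypass duals entirely and verify the isomorphism by Yoneda, showing that $F({\underline{\rm Hom}}_{\mathcal{C}}(X,Y))$ represents the functor $T\mapsto \mathrm{Hom}_{\mathcal{C}'}(T\otimes F(X),F(Y))$ using the adjunction defining ${\underline{\rm Hom}}_{\mathcal{C}}$; but since the excerpt has already introduced the dual description, the route above is more direct.
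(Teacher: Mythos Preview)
Your argument is correct and is essentially the standard one: reduce ${\underline{\rm Hom}}$ to $X^{\vee}\otimes Y$, use that a tensor functor commutes with $\otimes$, and invoke uniqueness of duals (via the transported evaluation/coevaluation maps and the triangle identities) to identify $F(X^{\vee})$ with $F(X)^{\vee}$.

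Note, however, that the paper does not actually give its own proof of this proposition. It is stated with a citation to \cite[Proposition~1.9]{DM} and used as a black box thereafter. So there is no ``paper's proof'' to compare against; the reference to Deligne--Milne is the proof. That said, the argument in \cite{DM} proceeds along exactly the lines you sketch (tensor functors between rigid categories preserve duals, hence internal Homs), so your proposal matches the intended justification.
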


Let us consider now the case of motives. 
\begin{definition}
Let $K$ be a number field and let $\mathcal{M}_{K}$ be the motivic category for absolute Hodge cycles:
${{\rm Hom}}_{\mathcal{M}_{K}} (M, \, N) := \mathrm{Mor}_{AH}^{0} (X, \, Y)$ (see \cite{DM}).
Define $\mathcal{M}_{\overline{K}}$ similarly. Let $H_{B}$ be the fiber functor given by Betti realization:
\begin{equation}
H_{B} : \mathcal{M}_{K} \rightarrow {\rm Vec}_{\Q}.
\label{fiber functor1}\end{equation}
\end{definition}
\medskip

The functor $H_B$ factors through the functor
\begin{equation}
\mathcal{M}_{K} \rightarrow 
\mathcal{M}_{\overline{K}}, \qquad
M \mapsto \overline{M} := M \otimes \overline{K}.
\label{tensor over K with overline K}
\end{equation}

\begin{definition}
Let $\{X_i\}$ be a family of varieties over $K$ and let $\mathcal{C} \subset \mathcal{M}_{K}$ be the full 
Tannakian  subcategory generated by $\{h (X_i)\}.$ Let $\overline{\mathcal{C}}$ be the smallest full Tannakian
subcategory of $\mathcal{M}_{\overline{K}}$ containing the image of $\mathcal{C}$ via 
the functor (\ref{tensor over K with overline K}). 
\end{definition}
\medskip

The category
$\overline{\mathcal{C}}$ is the full Tannakian subcategory of $\mathcal{M}_{\overline{K}}$ generated by
$\{h (\overline{X_i})\},$ c.f. \cite[pp.\ 215-216]{DM}.
For $M, N \in  {{\rm obj}} (\mathcal{C})$
the natural map
\begin{equation}
{{\rm Hom}}_{\overline{\mathcal{C}}} (\overline{M}, \, \overline{N}) \times \overline{M} \rightarrow \overline{N}
\label{def of underline hom 1}\end{equation}
gives the natural map
\begin{equation}
{{\rm Hom}}_{\overline{\mathcal{C}}} (\overline{M}, \, \overline{N}) \rightarrow  
{\underline{\rm Hom}}_{\overline{\mathcal{C}}}(\overline{M},\, \overline{N})
\label{def of underline hom 2}\end{equation}
where ${{\rm Hom}}_{\overline{\mathcal{C}}} (\overline{M}, \, \overline{N}) = 
{{\rm Hom}}_{\mathcal{M}_{\overline{K}}} (\overline{M}, \, \overline{N})$ is a finite  dimensional
vector space which is also a discrete $G_K$-module, so it can be considered as an Artin motive
in $\mathcal{M}_{K}.$ Moreover by (\ref{F preserves underline Hom}) we get 
${\underline{\rm Hom}}_{\overline{\mathcal{C}}}(\overline{M},\, \overline{N})
\cong {\underline{\rm Hom}}_{\mathcal{M}_{\overline{K}}}(\overline{M},\, \overline{N}).$
Hence the map (\ref{def of underline hom 2}) is naturally isomorphic to the map
\begin{equation}
{{\rm Hom}}_{\mathcal{M}_{\overline{K}}} (\overline{M}, \, \overline{N}) \rightarrow  
{\underline{\rm Hom}}_{\mathcal{M}_{\overline{K}}}(\overline{M},\, \overline{N}).
\label{def of underline hom 3}\end{equation}
Let $L/K$ be a field extension with $L \subset \overline{K}.$  For any variety $X/L$ of pure dimension $m$, one sets 
$h(X)^{\vee} := h(X)(m)$, which extends to the contravariant functor
$$\mathcal{M}_L \rightarrow \mathcal{M}_L, \qquad M \mapsto M^{\vee}$$
and one sets (see \cite[p. 205]{DM})
\begin{equation}
{\underline{\rm Hom}}_{\mathcal{M}_L}(M,\, N) := M^{\vee}\otimes N.
\label{explicit computation of underline hom}\end{equation} 
Since the map (\ref{def of underline hom 1}) commutes with the $G_K$-action,
using (\ref{explicit computation of underline hom}) the  
maps (\ref{def of underline hom 2}) and (\ref{def of underline hom 3}) also commute 
with the  $G_K$-action. 

\begin{proposition} \label{imbedding of hom in motivic category}
There is a natural morphism of motives in $\mathcal{M}_{K}$:
\begin{equation}
{{\rm Hom}}_{\mathcal{M}_{\overline{K}}} (\overline{M}, \, \overline{N}) \rightarrow  
{\underline{\rm Hom}}_{\mathcal{C}}(M,\, N),
\label{def of underline hom 4}\end{equation}
which is an embedding of motives.
\end{proposition}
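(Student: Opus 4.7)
The plan is to obtain (\ref{def of underline hom 4}) by Galois descent from the map (\ref{def of underline hom 3}), which already lives in $\mathcal{M}_{\overline{K}}$. First I would identify both the source and the target of (\ref{def of underline hom 3}) as base extensions to $\overline{K}$ of objects of $\mathcal{M}_K$. For the target, using (\ref{explicit computation of underline hom}) and the compatibility of the base-extension functor (\ref{tensor over K with overline K}) with tensor products and with the duality $M \mapsto M^{\vee}$, one has ${\underline{\rm Hom}}_{\mathcal{M}_{\overline{K}}}(\overline{M}, \overline{N}) \cong \overline{M}^{\vee} \otimes \overline{N} \cong \overline{M^{\vee} \otimes N} = \overline{{\underline{\rm Hom}}_{\mathcal{C}}(M, N)}$. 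For the source, the finite-dimensional $\Q$-vector space ${\rm Hom}_{\mathcal{M}_{\overline{K}}}(\overline{M}, \overline{N})$, equipped with its continuous $G_K$-action, is by construction the Artin motive in $\mathcal{M}_K$ whose base extension to $\mathcal{M}_{\overline{K}}$ is the underlying $\Q$-vector space (a direct sum of copies of the unit object).

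Next, I would invoke the standard Galois descent statement for $\mathcal{M}_K$ with respect to the base extension (\ref{tensor over K with overline K}): for any $A, X \in {\rm obj}(\mathcal{M}_K)$, the natural restriction ${\rm Hom}_{\mathcal{M}_K}(A, X) \to {\rm Hom}_{\mathcal{M}_{\overline{K}}}(\overline{A}, \overline{X})^{G_K}$ is a bijection, which is part of the Tannakian formalism for motives for absolute Hodge cycles \cite{DM}. The preceding paragraphs in the paper verify that (\ref{def of underline hom 3}) is $G_K$-equivariant; descent therefore produces a unique morphism in $\mathcal{M}_K$ between the descent data identified in step one. This is the desired morphism (\ref{def of underline hom 4}), and its naturality in $M$ and $N$ is inherited from that of (\ref{def of underline hom 3}).

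Finally, to verify that (\ref{def of underline hom 4}) is an embedding (that is, a monomorphism in the abelian category $\mathcal{M}_K$), I would apply the Betti fiber functor $H_B$, which is faithful and exact and hence reflects monomorphisms. After $H_B$, the map becomes the tautological inclusion of ${\rm Hom}_{\mathcal{M}_{\overline{K}}}(\overline{M}, \overline{N})$ into ${\rm Hom}_{\Q}(H_B(\overline{M}), H_B(\overline{N}))$, which is injective by the faithfulness of $H_B$ on $\mathcal{M}_{\overline{K}}$.

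The main technical obstacle is the descent step: one needs to package the relationship between $\mathcal{M}_K$ and $\mathcal{M}_{\overline{K}}$ so that a $G_K$-equivariant morphism in $\mathcal{M}_{\overline{K}}$ between base extensions from $\mathcal{M}_K$ produces a unique morphism in $\mathcal{M}_K$, and at the same time confirm that the left-hand source of (\ref{def of underline hom 3}), viewed with its Galois action, really is the base extension of the intended Artin motive in $\mathcal{M}_K$. Once this bookkeeping via the Tannakian formalism is in place, the remaining verifications (compatibility with $\underline{\rm Hom}$ on the target and injectivity via $H_B$) are formal.
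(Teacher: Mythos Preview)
Your argument is correct. For the existence of the morphism, you and the paper proceed in essentially the same way: the paper simply says ``follows from the definition of morphisms for the category of motives for absolute Hodge cycles,'' which is exactly the Galois-descent bookkeeping you spell out more carefully.

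For the embedding claim, however, you take a different route. The paper applies the functor ${\rm Hom}_{\mathcal{M}_{\overline{K}}}(\underline{1},\,-)$ to the map (\ref{def of underline hom 3}); since the source is, in $\mathcal{M}_{\overline{K}}$, a direct sum of copies of $\underline{1}$, and since ${\rm Hom}(\underline{1},\underline{\rm Hom}(\overline{M},\overline{N}))\cong{\rm Hom}(\overline{M},\overline{N})$, this recovers the identity map on ${\rm Hom}_{\mathcal{M}_{\overline{K}}}(\overline{M},\overline{N})$, and injectivity follows (using that in the semisimple category $\mathcal{M}_{\overline{K}}$ any kernel of a map out of a sum of $\underline{1}$'s would be detected by ${\rm Hom}(\underline{1},-)$). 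You instead apply the Betti fiber functor $H_B$ and use that a faithful exact tensor functor reflects monomorphisms, reducing to the tautological inclusion of ${\rm Hom}_{\mathcal{M}_{\overline{K}}}(\overline{M},\overline{N})$ into ${\rm Hom}_{\Q}(H_B(\overline{M}),H_B(\overline{N}))$. Both arguments are short and valid; the paper's is more intrinsic to the Tannakian category, while yours is more concrete and makes explicit exactly which linear-algebra injectivity is being used.
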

\begin{proof}
The existance of the morphism (\ref{def of underline hom 4}) follows from the definition of morphisms for the category 
of motives for absolute Hodge cycles.
Let $\underline{1}$ be the unit of $\mathcal{M}_{\overline{K}}.$ 
Applying ${{\rm Hom}}_{\mathcal{M}_{\overline{K}}} (\underline{1}, \,\, . \,)$ to the map
(\ref{def of underline hom 3}) one immediately gets the identity map 
${{\rm Hom}}_{\mathcal{M}_{\overline{K}}} (\overline{M}, \, \overline{N}) \rightarrow
{{\rm Hom}}_{\mathcal{M}_{\overline{K}}} (\overline{M}, \, \overline{N}).$ This shows that the map
(\ref{def of underline hom 4}) is an embedding of motives.
\end{proof}


\section{Motivic Mumford-Tate and Sato-Tate groups}

Using the Tannakian formalism, we now define a motivic Sato-Tate group
by reprising the setup from \S \ref{section:Zariski closure}.
\begin{definition}
Let $G_{\mathcal{M}_{K}} := {\rm Aut}^{\otimes} (H_{B})$ and 
let $\mathcal{M}_{K} (A)$ be the smallest Tannakian subcategory of $\mathcal{M}_{K}$ containing $A.$ 
Let $H_{B} | \mathcal{M}_K (A)$ be the restriction of $H_B$ to the subcategory 
$\mathcal{M}_K (A).$ Put $G_{\mathcal{M}_{K} (A)} := {\rm Aut}^{\otimes} (H_{B} | \mathcal{M}_{K} (A)).$ 
\end{definition}
\medskip

The algebraic groups $G_{\mathcal{M}_{K} (A)}$ are reductive
but not necessarily connected (see \cite[p.\ 379]{Se94}). 
Observe that the ring of endomorphisms $D := {\rm End}_{\overline{K}} (A) \otimes_{\Z} \Q$ 
is a discrete $G_K$-module which is also a finite dimensional $\Q$-vector space. Moreover (see \cite[p. 213]{DM}),
\begin{equation}
{\rm End}_{\overline{K}} (A) \otimes_{\Z} \Q = 
{\rm End}_{\overline{K}} (\overline{A}) \otimes_{\Z} \Q = 
{\rm End}_{\mathcal{M}_{\overline{K}}} (h^1(\overline{A})).
\label{end of abel var is the h1 end}\end{equation}

We may thus consider $D$ as an Artin motive, as follows.
Recall that $\mathcal{M}_{K}^{0}$ is equivalent to ${\rm Rep}_{\Q} (G_K)$,
the category of finite dimensional $\Q$-vector spaces with continuous actions of $G_K.$ 

\begin{definition}
Let $h^{0}(D)$ denote the Artin motive corresponding to $D.$ 
Let $\mathcal{M}_{K}^{0} (D)$ be the smallest Tannakian subcategory of 
$\mathcal{M}_{K}^{0}$ containing $h^{0}(D)$ and put 
$G_{\mathcal{M}_{K}^{0} (D)} := {\rm Aut}^{\otimes} (H_{B}^{0} | \mathcal{M}_{K}^{0} (D)).$
\label{def of h0D}\end{definition}
\medskip

\noindent
By (\ref{F preserves underline Hom}), Proposition \ref{imbedding of hom in motivic category}, and 
(\ref{end of abel var is the h1 end}), we have a natural embedding of motives 
\begin{equation}
h^0(D) \subset \underline{{\rm End}}_{\mathcal{M}_{K} (A)} (h^1(A)) = 
\underline{{\rm End}}_{\mathcal{M}_{K}} (h^1(A)).
\label{h0D imbeds into end h1 A}\end{equation}
Note that ${\underline{\rm End}}_{\mathcal{M}_{K} (A)} (h^1 (A)) = 
h^1(A) \otimes h^1(A)^{\vee} \in \mathcal{M}_{K} (A).$ 
Recall that $G_{\mathcal{M}_{K}^{0}} \cong G_K$ so we observe that
\begin{equation}
G_{\mathcal{M}_{K}^{0} (D)} \cong G(L_{e}/K).
\nonumber\end{equation}
Since $\mathcal{M}_{K}$ is semisimple \cite[Prop.\ 6.5]{DM}, the motive $h^0(D) $ splits off of 
$\underline{{\rm End}}_{\mathcal{M}_{K} (A)} (h^1(A))$ in $\mathcal{M}_{K}.$
Moreover the semisimplicity of $\mathcal{M}_{K}$, together with the observation that 
$\mathcal{M}_{K}^{0}$ and $\mathcal{M}_K (A)$ are full subcategories of
$\mathcal{M}_K$, shows that the top horizontal and left vertical maps in the following diagram 
are faithfully flat
(see \cite[(2.29)]{DM}):
\begin{equation}
{\xymatrix{
G_{\mathcal{M}_{K}} \ar@{>>}[d]^{} \ar@{>>}[r]^{}  & G_{K} \ar@{>>}[d]^{} \\
G_{\mathcal{M}_{K} (A)} \ar@{>>}[r]^{}  & G(L_e/K)}}
\label{surjection of motivic Galois groups}
\end{equation}
In particular all homomorphisms in \eqref{surjection of motivic Galois groups} are surjective.
\bigskip

Since $V := H_{1} (A(\C), \, \Q)$ admits a $\Q$-rational polarized 
Hodge structure of weight $1$, we have
\begin{equation}
G_{\mathcal{M}_{K}(A)} \subset GSp_{(V, \psi)}. 
\label{GMKA subset of GSpV}\end{equation}

\begin{definition}
Define the following algebraic groups:
\begin{align*}
G_{\mathcal{M}_{K}(A), 1}  &:= G_{\mathcal{M}_{K}(A)} \cap Sp_V \\
G_{\mathcal{M}_{K}(A), 1}^{\circ} &:= (G_{\mathcal{M}_{K}(A)})^{\circ} \cap Sp_V.
\end{align*}
\end{definition}

\begin{remark}
In \cite[p.\ 396]{Se94} the group $G_{\mathcal{M}_{K}(A), 1}$ is denoted $G_{\mathcal{M}_{K}(A)}^1.$
\end{remark}
\begin{definition}
For any $\tau \in G(L_{e}/K)$, put
\begin{equation}
GSp_{(V, \psi)}^{\tau} := 
\{g \in GSp_{(V, \psi)}:\,\, g \beta g^{-1} = 
\rho_{e}(\tau)(\beta) \,\ \forall \beta \in D\}.
\label{def of GSptau}\end{equation}
\end{definition}
\bigskip

We have the following equality:
\begin{equation}
GSp_{(V, \psi)}  = \bigsqcup_{\tau \in G(L_{e}/K)} \, GSp_{(V, \psi)}^{\tau}.
\label{GSp decomposition into GSptau}
\end{equation}
Observe that
\noindent
\begin{equation}
GSp_{(V, \psi)}^{Id}  = C_{D}( GSp_{(V, \psi)}).
\label{GSpId is CDGSp}
\end{equation}

\begin{remark}
The bottom horizontal arrow in the diagram (\ref{surjection of motivic Galois groups}) is
\begin{equation}
G_{\mathcal{M}_{K} (A)} \rightarrow G_{\mathcal{M}_{K} (D)} \cong G(L_{e}/K).
\label{map from GMKA to Gal}\end{equation}
Let $g \in G_{\mathcal{M}_{K} (A)}$ and let $\tau := \tau (g)$ be the image of $g$ via 
the map (\ref{map from GMKA to Gal}). Hence for any element $\beta \in D$ considered as an 
endomorphism of $V$ we have:
\begin{equation}
g \beta g^{-1} = \rho_{e}(\tau)(\beta).
\label{GMKA1 acts on V via Galois}
\end{equation}
\end{remark}

\begin{definition}
For any $\tau \in G(L_{e}/K)$, put
\begin{equation}
G_{\mathcal{M}_{K}(A)}^{\tau} := 
\{g \in G_{\mathcal{M}_{K}(A)}:\,\, g \beta g^{-1} = 
\rho_{e}(\tau)(\beta), \,\ \forall \beta \in D\}.
\label{def of GMKAtau}\end{equation}
\end{definition}
\bigskip

It follows from (\ref{GMKA1 acts on V via Galois}), (\ref{def of GMKAtau}), and the surjectivity of 
(\ref{map from GMKA to Gal}) that
\begin{equation}
G_{\mathcal{M}_{K}(A)} = \bigsqcup_{\tau \in G(L_{e}/K)} \, G_{\mathcal{M}_{K}(A)}^{\tau}
\label{GMKA decomposition into GMKAtau}
\end{equation}
It is clear from (\ref{GMKA subset of GSpV}) and (\ref{def of GSptau}) that 
\begin{equation}
G_{\mathcal{M}_{K}(A)}^{\tau} \subset GSp_{(V, \psi)}^{\tau}. 
\label{GMKAtau subset of GSptau}\end{equation}
Hence (\ref{GMKA1 acts on V via Galois}) and (\ref{GMKA decomposition into GMKAtau}) give
\begin{equation}
(G_{\mathcal{M}(A)})^{\circ}  \triangleleft G_{\mathcal{M}_{K}(A)}^{{Id}}  \triangleleft  G_{\mathcal{M}_{K}(A)}.
\label{GMKAId normal div in GMKA}
\end{equation}
The map (\ref{map from GMKA to Gal}) gives the following natural map: 
\begin{equation}
G_{\mathcal{M}_{K} (A), 1} \rightarrow  G(L_{e}/K).
\label{map from GMKA1 to Gal}\end{equation}

\begin{definition}
For any $\tau \in G(L_{e}/K)$ put
\begin{equation}
G_{\mathcal{M}_{K}(A), 1}^{\tau} := 
\{g \in G_{\mathcal{M}_{K}(A), 1}:\,\, g \beta g^{-1} = 
\rho_{e}(\tau)(\beta), \,\ \forall \beta \in D\}.
\label{def of GMKA1tau}\end{equation}
\end{definition}
\bigskip

It follows that there is the following equality
\begin{equation}
G_{\mathcal{M}_{K}(A), 1}^{\tau} = G_{\mathcal{M}_{K}(A), 1} \cap G_{\mathcal{M}_{K}(A)}^{\tau}.
\label{GMKA1tau equals GMKA1 cap GMKAtau}
\end{equation}
Hence for $\tau = \tau (g)$, we have
\begin{align}
G_{\mathcal{M}_{K}(A), 1}^{\tau} &\subset DL_{K}^{\tau}(A)
\label{GMKAtau subset DLKAtau} \\
G_{\mathcal{M}_{K}(A), 1} &\subset DL_{K} (A). 
\label{GMKA1 subset of DLKA}
\end{align}
Naturally we also have
\begin{equation}
(G_{\mathcal{M}(A), 1})^{\circ}  \triangleleft G_{\mathcal{M}_{K}(A), 1}^{{Id}}.
\triangleleft  G_{\mathcal{M}_{K}(A), 1}
\label{GMKA1Id normal div in GMKA1}
\end{equation}
Hence by (\ref{GMKA1tau equals GMKA1 cap GMKAtau}) we get
\begin{equation}
G_{\mathcal{M}(A), 1} / G_{\mathcal{M}_{K}(A), 1}^{{Id}} \subset 
G_{\mathcal{M}(A)} / G_{\mathcal{M}_{K}(A)}^{{Id}}.   
\label{GMKA1 mod GMKA1Id subset GMKA mod GMKAId}
\end{equation}
\bigskip

We have the following analogue of Theorem~\ref{equality of conn comp for Glalg and Glalg1}. 
\begin{theorem}\label{equality of conn comp for GM and GM1} 
Assume that $G_{\mathcal{M}_{K}(A), 1}^{\circ}$ is connected. Then the following map is an isomorphism:
$$i_{M} \,\, : \,\,  G_{\mathcal{M}_{K}(A), 1}/(G_{\mathcal{M}_{K}(A), 1})^{\circ} \,\, 
{\stackrel{\cong}{\longrightarrow}} \,\, G_{\mathcal{M}_{K}(A)} /(G_{\mathcal{M}_{K}(A)})^{\circ}.$$  
\end{theorem}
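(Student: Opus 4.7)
The plan is to mimic the proof of Theorem \ref{equality of conn comp for Glalg and Glalg1}, where the analogous $\ell$-adic statement was deduced from the similitude exact sequence \eqref{The exact sequence for GlK1alg and GlKalg} via a snake-lemma chase. First I construct the motivic analogue of that exact sequence. The inclusion $G_{\mathcal{M}_K(A)} \subseteq GSp_{(V,\psi)}$ from \eqref{GMKA subset of GSpV}, combined with the definition $G_{\mathcal{M}_K(A),1} = G_{\mathcal{M}_K(A)} \cap Sp_V$, realizes $G_{\mathcal{M}_K(A),1}$ as the kernel of the restriction $\chi$ to $G_{\mathcal{M}_K(A)}$ of the similitude character $GSp_{(V,\psi)} \to \G_m$. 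Surjectivity of $\chi$ is guaranteed by the presence of the Tate twist $\Q(1)$ in $\mathcal{M}_K(A)$ coming from the polarization on $A$, since the action on $\Q(1)$ gives a nontrivial character of the reductive group $G_{\mathcal{M}_K(A)}$ whose image must therefore be all of $\G_m$.

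Next I restrict to identity components. The image $\chi((G_{\mathcal{M}_K(A)})^\circ)$ is a closed connected subgroup of $\G_m$ of finite index (since $(G_{\mathcal{M}_K(A)})^\circ$ has finite index in $G_{\mathcal{M}_K(A)}$), and any such subgroup must equal $\G_m$. This yields a surjection $(G_{\mathcal{M}_K(A)})^\circ \twoheadrightarrow \G_m$ whose kernel is $(G_{\mathcal{M}_K(A)})^\circ \cap G_{\mathcal{M}_K(A),1}$, which is by definition the group $G_{\mathcal{M}_K(A),1}^\circ$ appearing in the hypothesis. Since $G_{\mathcal{M}_K(A),1}^\circ$ is assumed connected and contains the identity component $(G_{\mathcal{M}_K(A),1})^\circ$ as a closed subgroup, we obtain the crucial equality $(G_{\mathcal{M}_K(A)})^\circ \cap G_{\mathcal{M}_K(A),1} = (G_{\mathcal{M}_K(A),1})^\circ$.

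At this point the two similitude sequences, one for $G_{\mathcal{M}_K(A)}$ and one for its identity component, fit into a commutative diagram with exact rows, vertical inclusions on the two left columns, and the identity on the right column. Applying the snake lemma exactly as in the proof of Theorem \ref{equality of conn comp for Glalg and Glalg1} then yields the exact sequence of component groups and identifies $i_M$ as the desired isomorphism. The main obstacle is the identification $(G_{\mathcal{M}_K(A)})^\circ \cap G_{\mathcal{M}_K(A),1} = (G_{\mathcal{M}_K(A),1})^\circ$, which is precisely where the connectedness hypothesis is used to bridge the two distinct definitions; once this is in place, the rest of the argument is formal.
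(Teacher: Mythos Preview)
Your proposal is correct and follows essentially the same route as the paper: both set up the commutative diagram of similitude exact sequences for $G_{\mathcal{M}_K(A)}$ and its identity component, use the hypothesis to identify $G_{\mathcal{M}_K(A),1}^{\circ}=(G_{\mathcal{M}_K(A)})^{\circ}\cap Sp_V$ with $(G_{\mathcal{M}_K(A),1})^{\circ}$, and then conclude by the snake-lemma chase already used in Theorem~\ref{equality of conn comp for Glalg and Glalg1}. The only cosmetic difference is that the paper justifies the crucial equality via a dimension count, whereas you use the observation that $G_{\mathcal{M}_K(A),1}^{\circ}\subseteq G_{\mathcal{M}_K(A),1}$ is connected and hence lies in $(G_{\mathcal{M}_K(A),1})^{\circ}$; you state the reverse inclusion but should make this direction explicit as well.
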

\begin{proof}  
Consider the following diagram, in which we write $\mathcal{M}(A)$ for $\mathcal{M}_{K} (A)$
to make the notation simpler.  
$$
\xymatrix{
\quad & \quad 1 \ar@<0.1ex>[d]^{}  \quad & \quad 1 \ar@<0.1ex>[d]^{} \quad & 
1 \ar@<0.1ex>[d]^{} \\ 
1 \ar[r]^{} \quad & \quad  (G_{\mathcal{M}(A), 1})^{\circ}\ar@<0.1ex>[d]^{} \ar[r]^{}  \quad 
& \quad G_{\mathcal{M}(A), 1} \ar@<0.1ex>[d]^{} \ar[r]^{}  \quad 
& \quad \pi_{0} (G_{\mathcal{M}(A), 1})   
\ar@<0.1ex>[d]^{i_{M}} \ar[r]^{}  \quad 
& \,\, 1 \\ 
1 \ar[r]^{} \quad & \quad (G_{\mathcal{M}(A)})^{\circ} \ar@<0.1ex>[d]^{} \ar[r]^{}  \quad 
& \quad  G_{\mathcal{M}(A)} \ar@<0.1ex>[d]^{} \ar[r]^{}  \quad 
& \quad \pi_{0}(G_{\mathcal{M}(A)})   \ar@<0.1ex>[d]^{} \ar[r]^{}  \quad 
&  \,\, 1 \\
1 \ar[r]^{} \quad & \quad \G_m  \ar@<0.1ex>[d]^{}  \ar[r]^{=}  
\quad & \quad \G_m  \ar@<0.1ex>[d]^{}  \ar[r]^{} \quad & 1\\
 \quad & \quad 1   \quad & 1 \\}
\label{motivic diagram in Serre theorem}$$

By definition the rows are exact and the middle column is exact. 
Hence the map $i_M$ is surjective. Since $G_{\mathcal{M}_{K}(A), 1}^{\circ}$ has the 
same dimension as $G_{\mathcal{M}_{K}(A), 1}$ and by assumption 
$G_{\mathcal{M}_{K}(A), 1}^{\circ}$ is connected,
we then have $G_{\mathcal{M}_{K}(A), 1}^{\circ} = (G_{\mathcal{M}_{K}(A), 1})^{\circ}.$ 
Hence the left column is also exact. This shows that $i_M$ is an isomorphism.
\end{proof}

\begin{remark}
Since $G_{\mathcal{M}(A)}$ is reductive, the middle vertical column of the diagram 
of the proof of Theorem~\ref{equality of conn comp for GM and GM1}) 
shows that $G_{\mathcal{M}(A), 1}$ is also reductive.
\label{GMKA1 is reductive}\end{remark}

\begin{corollary}\label{equality of quotients concerning for GM and GM1} 
Assume that $G_{\mathcal{M}_{K}(A), 1}^{\circ}$ is connected. Then there are natural isomorphisms
\begin{align}
G_{\mathcal{M}_{K}(A), 1}/G_{\mathcal{M}_{K}(A), 1}^{Id} \,\, 
& {\stackrel{\cong}{\longrightarrow}} \,\, G_{\mathcal{M}_{K}(A)} /G_{\mathcal{M}_{K}(A)}^{Id},
\label{GMKA1 mod GMKA1Id equals GMKA mod GMKAId} \\
G_{\mathcal{M}_{K}(A), 1}^{Id}/(G_{\mathcal{M}_{K}(A), 1})^{\circ} \,\, 
&{\stackrel{\cong}{\longrightarrow}} \,\, G_{\mathcal{M}_{K}(A)}^{Id} /(G_{\mathcal{M}_{K}(A)})^{\circ}, 
\label{GMKA1Id mod GMKA1circ equals GMKAId mod GMKAcirc} \\
G_{\mathcal{M}_{K}(A), 1}/G_{\mathcal{M}_{K}(A), 1}^{Id} \,\, 
&{\stackrel{\cong}{\longrightarrow}} \,\, DL_{K} (A) / DL_{K} (A)^{Id} 
\,\, {\stackrel{\cong}{\longrightarrow}} \,\, G(L_e/K). 
\label{GMKA1 mod GMKA1Id equals DLKA mod DLKAId equals GLeK}
\end{align}
In particular the natural map (\ref{map from GMKA1 to Gal}) is surjective.
\end{corollary}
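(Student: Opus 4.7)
The plan is to deduce all three isomorphisms, plus the concluding surjectivity claim, from Theorem \ref{equality of conn comp for GM and GM1} combined with the bottom row of diagram (\ref{surjection of motivic Galois groups}). The starting observation is that the decomposition (\ref{GMKA decomposition into GMKAtau}) identifies $G_{\mathcal{M}_{K}(A)}^{Id}$ with the kernel of the surjection $G_{\mathcal{M}_{K}(A)} \to G(L_{e}/K)$ coming from (\ref{surjection of motivic Galois groups}), so that $G_{\mathcal{M}_{K}(A)}/G_{\mathcal{M}_{K}(A)}^{Id} \cong G(L_{e}/K)$.

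First I would establish (\ref{GMKA1 mod GMKA1Id equals GMKA mod GMKAId}) and the final surjectivity claim simultaneously, by showing that the restriction of this surjection to $G_{\mathcal{M}_{K}(A), 1}$ remains onto. Since $(G_{\mathcal{M}_{K}(A)})^{\circ} \subseteq G_{\mathcal{M}_{K}(A)}^{Id}$ by (\ref{GMKAId normal div in GMKA}), the surjection factors through the component group $G_{\mathcal{M}_{K}(A)}/(G_{\mathcal{M}_{K}(A)})^{\circ}$; the isomorphism $i_M$ of Theorem \ref{equality of conn comp for GM and GM1} then lets me pull this factored surjection back through $G_{\mathcal{M}_{K}(A), 1}/(G_{\mathcal{M}_{K}(A), 1})^{\circ}$, so that the composition $G_{\mathcal{M}_{K}(A), 1} \to G(L_{e}/K)$ is surjective. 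Its kernel is $G_{\mathcal{M}_{K}(A), 1} \cap G_{\mathcal{M}_{K}(A)}^{Id}$, which equals $G_{\mathcal{M}_{K}(A), 1}^{Id}$ by (\ref{GMKA1tau equals GMKA1 cap GMKAtau}).

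For (\ref{GMKA1Id mod GMKA1circ equals GMKAId mod GMKAcirc}), I would apply the five lemma to the map of short exact sequences induced by the inclusion $G_{\mathcal{M}_{K}(A), 1} \hookrightarrow G_{\mathcal{M}_{K}(A)}$, where each row has the form
$$1 \to G^{Id}/G^{\circ} \to G/G^{\circ} \to G/G^{Id} \to 1$$
with $G$ equal respectively to $G_{\mathcal{M}_{K}(A), 1}$ and $G_{\mathcal{M}_{K}(A)}$ (exactness follows from (\ref{GMKAId normal div in GMKA}) and (\ref{GMKA1Id normal div in GMKA1})). The middle vertical arrow is $i_M$, an isomorphism by Theorem \ref{equality of conn comp for GM and GM1}; the right vertical is the isomorphism from the previous paragraph, once both quotients are identified with $G(L_{e}/K)$. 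The five lemma then forces the left vertical to be an isomorphism.

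Finally, for (\ref{GMKA1 mod GMKA1Id equals DLKA mod DLKAId equals GLeK}), I would use the disjoint-union decomposition (\ref{decomposition into twisted Lefschetz for fixed elements}) to define a homomorphism $DL_{K}(A) \to G(L_{e}/K)$ sending each element of $DL_{K}^{\tau}(A)$ to $\tau$, whose kernel is $DL_{K}^{Id}(A) = DL_{K}(A)^{Id}$. Surjectivity is inherited from the surjection $G_{\mathcal{M}_{K}(A), 1} \to G(L_{e}/K)$ via the containment $G_{\mathcal{M}_{K}(A), 1} \subseteq DL_{K}(A)$ from (\ref{GMKA1 subset of DLKA}). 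The genuine content of this corollary lies entirely in Theorem \ref{equality of conn comp for GM and GM1}; the main difficulty here is purely organizational, namely keeping straight the various containments of normal subgroups and identity cosets so that the diagrams actually commute.
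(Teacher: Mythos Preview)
Your proposal is correct and follows essentially the same approach as the paper's proof, which is a one-line citation of (\ref{GMKAId normal div in GMKA}), (\ref{GMKA1Id normal div in GMKA1}), (\ref{GMKA1 mod GMKA1Id subset GMKA mod GMKAId}), the surjectivity of (\ref{map from GMKA to Gal}), and Theorem~\ref{equality of conn comp for GM and GM1}. You have simply spelled out how these ingredients assemble: factoring the surjection to $G(L_e/K)$ through the component group, transporting via $i_M$, identifying kernels via (\ref{GMKA1tau equals GMKA1 cap GMKAtau}), and then running the five lemma on the resulting map of short exact sequences---exactly the natural elaboration of the paper's terse argument.
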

\begin{proof}  This follows from (\ref{GMKAId normal div in GMKA}), 
(\ref{GMKA1Id normal div in GMKA1}), (\ref{GMKA1 mod GMKA1Id subset GMKA mod GMKAId}),
the surjectivity of (\ref{map from GMKA to Gal}), and Theorem 
\ref{equality of conn comp for GM and GM1}.
\end{proof}

\begin{remark}
The $l$-adic representation $\rho_{l} \, : \, G_K \rightarrow G_{\mathcal{M}_{K}(A)} (\Q_l)$
associated with $A$ factors through $G_{\mathcal{M}_{K}(A)} (\Q_l)$
(see \cite[p.\ 386]{Se94}).
Hence
\begin{equation}
G_{l, K}^{alg} \subset {G_{\mathcal{M}_{K}(A)}}_{\Q_l} 
\label{GlKalg subset GMKAQl}\end{equation}
where ${G_{\mathcal{M}_{K}(A)}}_{\Q_l} := {G_{\mathcal{M}_{K}(A)}} \otimes_{\Q} \Q_l.$
By the commutativity of the diagram
$$
\xymatrix{
\quad G_{l, K, 1}^{alg} \ar@<0.1ex>[d]^{} \ar[r]^{}   \quad & 
\quad {G_{\mathcal{M}_{K}(A), 1}}_{\Q_l}  \ar@<0.1ex>[d]^{} \\ 
\quad G_{l, K}^{alg} \ar@<0.1ex>[d]^{} \ar[r]^{}   \quad & 
\quad {G_{\mathcal{M}_{K}(A)}}_{\Q_l} \ar@<0.1ex>[d]^{} \\
\quad\G_{m}  \ar[r]^{=}  \quad & \quad  \G_{m} \\
}\label{GlK1 subset GMKA1Ql}$$ 
it follows that
\begin{equation}
G_{l, K, 1}^{alg} \subset {G_{\mathcal{M}_{K}(A), 1}}_{\Q_l}. 
\label{GlK1alg subset GMKA1Ql}\end{equation}
\end{remark}

\begin{definition}
The algebraic groups
\begin{align*}
MMT_K (A) & := G_{\mathcal{M}_{K}(A)} \\
MST_K (A) & := G_{\mathcal{M}_{K}(A), 1}
\end{align*}
will be called the \emph{motivic Mumford-Tate group} and \emph{motivic Sato-Tate group} for $A.$
\label{def of motivic mumford Tate and Sato Tate groups}\end{definition} 

\begin{conjecture} (Motivic Mumford-Tate) \label{Motivic Mumford-Tate} 
For any prime number $l$,
\begin{equation}
G_{l, K}^{alg} = {MMT_K (A)}_{\Q_l} .
\label{MMT eq}\end{equation}
\end{conjecture}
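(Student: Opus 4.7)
The statement is an open conjecture rather than a theorem, so any honest proposal must identify which deeper questions it reduces to. The inclusion $G_{l,K}^{alg} \subseteq MMT_K(A)_{\Q_l}$ is already recorded in \eqref{GlKalg subset GMKAQl}, so all the content lies in the reverse inclusion. The plan is to decompose both sides into their identity components and component groups and match these matchings separately.

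For the connected parts, I would combine the classical Mumford--Tate conjecture (Conjecture~\ref{Mumford-Tate}) with Serre's hypothesis (invoked in the introduction and in \S 9) that the classical Mumford--Tate group $MT(A)$ is precisely the connected component $(MMT_K(A))^{\circ}$. Assuming both, we obtain
$$(G_{l,K}^{alg})^{\circ} \;=\; MT(A)_{\Q_l} \;=\; ((MMT_K(A))^{\circ})_{\Q_l},$$
which gives the desired equality at the level of identity components.

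For the component groups, I would compare $\pi_0(G_{l,K}^{alg})$ and $\pi_0(MMT_K(A))$ by using the surjections from both onto $G(L_e/K)$: on the motivic side through the bottom map of \eqref{surjection of motivic Galois groups}, and on the $\ell$-adic side through the coset decomposition \eqref{decomposition into cosets of algebraic closures} together with Serre's theorem cited in Remark~\ref{minimal field of connectedness}. Producing compatible sections and identifying both component groups with $G(L_e/K)$ would conclude the argument, exactly as happens in the endomorphism-explained cases of Section~5 (cf.\ Theorem~\ref{conditions for AST}).

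The hardest step, and the genuine obstruction to making this a theorem rather than a conjecture, is to rule out the possibility that $\pi_0(MMT_K(A))$ is strictly larger than $\pi_0(G_{l,K}^{alg})$ because of absolute Hodge cycles that are moved by Galois but not accounted for on the $\ell$-adic side. Controlling this would require, in effect, a motivic enhancement of Faltings's theorem applied to the full discrete $G_K$-module of absolute Hodge classes — a refinement of the standard Hodge/Tate-type conjectures that currently lies well out of reach, except in situations (such as $\dim A \leq 3$) where everything is already explained by endomorphisms, in which case the two component groups visibly agree and the conjecture follows from the results of \S 5.
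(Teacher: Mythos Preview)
Your proposal correctly recognizes that this is stated as a conjecture, not a theorem, and the paper gives no proof of it. Your decomposition into the identity-component equality \eqref{MMT0 eq} and the component-group equality \eqref{GMT0C eq} is exactly the reformulation the paper records in the remark immediately following the conjecture, and your identification of the known inclusion \eqref{GlKalg subset GMKAQl} together with the reduction of the connected part to the classical Mumford--Tate conjecture plus Serre's hypothesis $MT(A) = (MMT_K(A))^{\circ}$ is consistent with the paper's treatment in \S 9.

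One small point worth sharpening: in your component-group discussion you aim to identify both $\pi_0(G_{l,K}^{alg})$ and $\pi_0(MMT_K(A))$ with $G(L_e/K)$. The paper's own analysis (see \eqref{pi 0 AST group as an extension} and diagram \eqref{diagram competibility of AST GMKA1 with DLKA 2}) shows that in general $\pi_0(MMT_K(A))$ only \emph{surjects} onto $G(L_e/K)$, with kernel $\pi_0(G_{\mathcal{M}_K(A)}^{Id})$; this kernel vanishes precisely in the endomorphism-explained cases you mention at the end. So your final paragraph is on target, but the obstruction you describe is not only that $\pi_0(MMT_K(A))$ might exceed $\pi_0(G_{l,K}^{alg})$ --- it is that neither component group need equal $G(L_e/K)$ in general, and matching them requires controlling the extra pieces on both sides.
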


By the diagram above, Conjecture \ref{Motivic Mumford-Tate} is equivalent to the following.
\begin{conjecture} (Motivic Sato-Tate) \label{Motivic Sato-Tate} 
For any prime number $l$,
\begin{equation}
G_{l, K, 1}^{alg} = MST_{K}(A)_{\Q_l} .
\label{MST eq}\end{equation}
\end{conjecture}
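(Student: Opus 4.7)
The plan is to prove \eqref{MST eq} by establishing the two inclusions separately. The inclusion $G_{l,K,1}^{alg} \subseteq MST_K(A)_{\Q_l}$ is already recorded in \eqref{GlK1alg subset GMKA1Ql}, so all of the content lies in the reverse inclusion, which asserts that the Zariski closure of the $l$-adic Galois image exhausts the motivic Sato-Tate group.

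My strategy is to split the question using the exact sequence
\[
1 \to (MST_K(A))^{\circ} \to MST_K(A) \to \pi_0(MST_K(A)) \to 1
\]
together with the identification $\pi_0(MST_K(A)) \cong G(L_e/K)$ from \eqref{GMKA1 mod GMKA1Id equals DLKA mod DLKAId equals GLeK}. Combined with the analogous exact sequence on the Galois side, it suffices to verify: (a) $(G_{l,K,1}^{alg})^{\circ} = (MST_K(A))^{\circ}_{\Q_l}$; and (b) the canonical homomorphism $G_{l,K,1}^{alg}/(G_{l,K,1}^{alg})^{\circ} \to G(L_e/K)$ obtained from the inclusion $G_{l,K,1}^{alg} \subseteq MST_K(A)_{\Q_l}$ and Corollary~\ref{equality of quotients concerning for GM and GM1} is an isomorphism.

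For (a), I would invoke the classical Mumford-Tate conjecture \eqref{MT eq}, which by Remark~\ref{Mumford-Tate Conj and Deligne Theorem} is equivalent to $(G_{l,K,1}^{alg})^{\circ} = H(A)_{\Q_l}$. On the motivic side, Serre's hypothesis that $MT(A) = MMT_K(A)^{\circ}$ intersected with $Sp_{V}$ yields $H(A) = (MST_K(A))^{\circ}$, so (a) reduces to two hypotheses already implicit in the framework of the paper.

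For (b), surjectivity follows from Chebotarev together with the observation that for $\sigma \in G_K$ the conjugation action of $\rho_l(\sigma)$ on $D \subseteq \mathrm{End}(V_l)$ recovers $\rho_e(\sigma)$, so the class of $\rho_l(\sigma)$ in $\pi_0(G_{l,K,1}^{alg})$ is sent to the class of $\sigma$ in $G(L_e/K)$. The heart of the argument is injectivity, which by Remark~\ref{minimal field of connectedness} is equivalent to the equality $L_0 = L_e$. The containment $L_e \subseteq L_0$ is comparatively easy: over $L_0$ the Galois image lies in $(G_{l,K,1}^{alg})^{\circ} \subseteq H(A)_{\Q_l} \subseteq C_D(Sp_V)$, forcing every element of $D$ to be $G_{L_0}$-equivariant. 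The main obstacle is the reverse inclusion $L_0 \subseteq L_e$: it demands that every $G_{L_e}$-invariant motivic tensor construction on $h^1(A)$ be detected by $D$, which is a Tate-type statement amounting to a motivic refinement of the Tate conjecture for powers of $A$. This is precisely the ingredient that blocks an unconditional proof and is the step I expect to require the deepest input.
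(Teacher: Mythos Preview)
The statement you are attempting to prove is a \emph{conjecture} in the paper, not a theorem; there is no proof in the paper for you to be compared against. The only justification the paper offers is the single sentence preceding it: ``By the diagram above, Conjecture~\ref{Motivic Mumford-Tate} is equivalent to the following.'' In other words, the paper's entire content here is the observation that the exact columns in the diagram relating $G_{l,K}^{alg}$, $G_{l,K,1}^{alg}$, ${G_{\mathcal{M}_K(A)}}_{\Q_l}$, ${G_{\mathcal{M}_K(A),1}}_{\Q_l}$, and $\G_m$ force \eqref{MMT eq} and \eqref{MST eq} to be equivalent. Neither is proved.

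Your proposal, by contrast, is an outline of what an actual proof would require, and you correctly recognise at the end that the injectivity in your step~(b) demands a Tate-type statement that is currently out of reach. That diagnosis is sound. However, your outline also contains a technical slip that would matter if one tried to execute it: the identification you quote from \eqref{GMKA1 mod GMKA1Id equals DLKA mod DLKAId equals GLeK} is
\[
G_{\mathcal{M}_K(A),1}\big/ G_{\mathcal{M}_K(A),1}^{Id} \;\cong\; G(L_e/K),
\]
which is \emph{not} the same as $\pi_0(MST_K(A)) \cong G(L_e/K)$. The discrepancy between $G_{\mathcal{M}_K(A),1}^{Id}$ and $(G_{\mathcal{M}_K(A),1})^{\circ}$ is exactly the group $\pi_0(G_{\mathcal{M}_K(A),1}^{Id})$ appearing in the exact sequence \eqref{pi 0 AST group as an extension}, and it need not vanish in general. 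So even granting the Mumford--Tate conjecture and Serre's hypothesis $MT(A) = MMT_K(A)^{\circ}$, your two-step reduction does not quite close: you would also need to control this extra component group, which is again a question about motivic cycles not detected by $D$.
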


\begin{remark}
Conjecture \ref{Motivic Mumford-Tate} is equivalent to the conjunction of the following  
equalities:
\begin{align}
(G_{l, K}^{alg})^{\circ} &= ({MMT_K (A)}_{\Q_l})^{\circ} 
\label{MMT0 eq} \\
\pi_{0} (G_{l, K}^{alg}) &= \pi_{0} ({MMT_K (A)}_{\Q_l}).
\label{GMT0C eq}
\end{align}
Similarly, Conjecture \ref{Motivic Sato-Tate} is equivalent to the conjunction of the following equalities:
\begin{align}
(G_{l, K, 1}^{alg})^{\circ} &= ({MST_K (A)}_{\Q_l})^{\circ} 
\label{MST0 eq} \\
\pi_{0} (G_{l, K, 1}^{alg}) &= \pi_{0} ({MST_K (A)}_{\Q_l}).
\label{MST0C eq}
\end{align}
\end{remark}

\begin{theorem}\label{Motivic Galois groups standard properties} 
We have the following results.
\begin{itemize} 
\item[1.] An isogeny $\phi : A_1 \rightarrow A_2$ of abelian varieties over a number field $K$ induces isomorphisms 
$G_{\mathcal{M}_{K}(A_1)}\cong G_{\mathcal{M}_{K}(A_2)}$ and 
$G_{\mathcal{M}_{K}(A_1), 1}\cong G_{\mathcal{M}_{K}(A_2), 1}$.

\item[2.] If $A$ is an abelian variety over a number field $K$, then
for every positive integer $s,$ \,\,\,
$G_{\mathcal{M}_{K}(A^s)}\cong G_{\mathcal{M}_{K}(A)}$ and 
$G_{\mathcal{M}_{K}(A^s), 1}\cong G_{\mathcal{M}_{K}(A), 1}$.
\end{itemize}
\end{theorem}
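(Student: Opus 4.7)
The plan is to transpose the proof of Theorem~\ref{Zariski closure standard properties} into the Tannakian setting, exploiting the fact that both operations under consideration—isogeny and taking $s$-th powers—induce equivalences of the relevant Tannakian subcategories of $\mathcal{M}_K$ that intertwine the Betti fiber functors. Once this is established, Tannakian duality yields the asserted isomorphisms of automorphism groups, and the corresponding statements for the $(\,\cdot\,)_1$ versions come from intersecting with the appropriate symplectic subgroup.

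For part 1, I would exploit the full embedding of the category of abelian varieties up to isogeny into $\mathcal{M}_K$ via $A \mapsto h^1(A)$. An isogeny $\phi : A_1 \to A_2$, being invertible after tensoring Homs with $\Q$, yields an isomorphism $h^1(A_1) \cong h^1(A_2)$ in $\mathcal{M}_K$. Combined with $h^i(A) \cong \bigwedge^i h^1(A)$ for abelian varieties, this extends to an isomorphism $h(A_1) \cong h(A_2)$, hence to an equivalence of Tannakian subcategories $\mathcal{M}_K(A_1) \simeq \mathcal{M}_K(A_2)$; under the identification $V(A_1) \cong V(A_2)$ induced by $H_1(\phi)$, the Betti fiber functors on the two sides are intertwined. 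Taking $\mathrm{Aut}^{\otimes}$ produces $G_{\mathcal{M}_K(A_1)} \cong G_{\mathcal{M}_K(A_2)}$. Because the Riemann forms $\psi_{V(A_i)}$ match up to a positive rational factor, the isomorphism respects the similitude character and the embeddings into $GSp_{(V,\psi)}$, so restriction to $Sp_{(V,\psi)}$ gives $G_{\mathcal{M}_K(A_1),1} \cong G_{\mathcal{M}_K(A_2),1}$.

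For part 2, the core claim is the equality $\mathcal{M}_K(A^s) = \mathcal{M}_K(A)$ of full Tannakian subcategories of $\mathcal{M}_K$. The inclusion $\mathcal{M}_K(A^s) \subseteq \mathcal{M}_K(A)$ follows from $h^1(A^s) \cong h^1(A)^{\oplus s}$ together with the wedge-power formula, since the right-hand side lies in $\mathcal{M}_K(A)$. The reverse inclusion uses that the projection $A^s \to A$ realizes $h(A)$ as a direct summand of $h(A^s)$; semisimplicity of $\mathcal{M}_K$ \cite[Prop.~6.5]{DM} lets us extract this summand inside $\mathcal{M}_K(A^s)$. The natural identification $V(A^s) \cong V(A)^{\oplus s}$ intertwines the Betti fiber functors (so that $\mu_{\infty, A^s}$ becomes the diagonal of $\mu_{\infty, A}$), hence $G_{\mathcal{M}_K(A^s)} \cong G_{\mathcal{M}_K(A)}$ by the same Tannakian principle. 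A diagonal automorphism of $V(A)^{\oplus s}$ preserves the block-diagonal symplectic form precisely when its single $V(A)$-entry preserves $\psi_V$, yielding $G_{\mathcal{M}_K(A^s),1} \cong G_{\mathcal{M}_K(A),1}$.

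The argument is essentially formal, so the main effort lies in careful bookkeeping: one must check that every equivalence of Tannakian subcategories is compatible with the chosen Betti fiber functor, and that every isomorphism respects the similitude character that governs the distinction between $G_{\mathcal{M}_K(A)}$ and $G_{\mathcal{M}_K(A),1}$. Since all of these compatibilities follow by unwinding definitions, no serious technical obstacle is expected.
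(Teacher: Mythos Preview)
Your proposal is correct and follows essentially the same approach as the paper: both arguments reduce to showing that the Tannakian subcategories $\mathcal{M}_K(A_1)$ and $\mathcal{M}_K(A_2)$ (resp.\ $\mathcal{M}_K(A^s)$ and $\mathcal{M}_K(A)$) are equivalent via the isomorphism $h^1(A_1)\cong h^1(A_2)$ (resp.\ $h^1(A^s)\cong h^1(A)^{\oplus s}$) combined with $h(B)=\bigwedge h^1(B)$, and that this equivalence intertwines the Betti fiber functors. The only cosmetic difference is that the paper invokes the diagonal $\Delta:A\to A^s$ to produce the equivalence in one stroke, whereas you verify the two inclusions $\mathcal{M}_K(A^s)\subseteq\mathcal{M}_K(A)$ and $\mathcal{M}_K(A)\subseteq\mathcal{M}_K(A^s)$ separately; your added remarks on compatibility with the similitude character make explicit what the paper leaves implicit.
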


\begin{proof}
1. Observe that an isogeny $\phi$ induces a natural isomorphism
$V (A_1) \cong V (A_2)$ and a natural equivalence of categories
$\mathcal{M}_{K}(A_1) \cong \mathcal{M}_{K}(A_2).$ This leads to 
a natural equivalence of fiber functors 
$H_{B} \, | \, \mathcal{M}_{K}(A_1) \cong H_{B} \, | \, \mathcal{M}_{K}(A_2).$
\medskip

\noindent
2. By the K{\" u}nneth formula, there is an isomorphism of motives 
$h^1 (A^s) \cong h^1 (A)^s.$ In addition for any abelian variety $B$ over $K$
we have $h(B) = \wedge h^1 (B).$ This shows that the diagonal map
$\Delta \,:\, A \rightarrow A^s$ induces a natural equivalence of categories
$\mathcal{M}_{K}(A) \cong \mathcal{M}_{K}(A^s).$ 
Moreover there is a natural isomorphism $V(A^s) \cong V (A)^s.$ Hence there is a 
natural equivalence of fiber functors 
$H_{B} \, | \, \mathcal{M}_{K}(A^s) \cong H_{B} \, | \, \mathcal{M}_{K}(A).$ 
\end{proof}

\begin{corollary}\label{MMT and MST true for A implies true for As}
Let $A/K$ be an abelian variety such that the motivic Mumford-Tate and 
motivic Sato-Tate conjectures hold for $A.$ Then the motivic Mumford-Tate 
and motivic Sato-Tate conjectures hold for $A^s$ for every positive integer $s.$
\end{corollary}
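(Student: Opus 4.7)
The proof should be essentially a bookkeeping exercise, combining the two ``standard properties'' results (Theorem~\ref{Zariski closure standard properties} on the Galois side and Theorem~\ref{Motivic Galois groups standard properties} on the motivic side) with the hypothesis. The plan is to run the argument of Corollary~\ref{MT true for A implies true for As} verbatim, but with $MT(A)$ replaced by $MMT_K(A)$ and $H(A)$ replaced by $MST_K(A)$.

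Concretely, I would first invoke Theorem~\ref{Zariski closure standard properties}(2), which provides natural isomorphisms
\[
G_{l,K}^{alg}(A^s) \cong G_{l,K}^{alg}(A), \qquad G_{l,K,1}^{alg}(A^s) \cong G_{l,K,1}^{alg}(A),
\]
induced by the diagonal embedding $\Delta: A \hookrightarrow A^s$ via the identification $V_l(A^s) \cong V_l(A)^s$ on which $G_K$ acts diagonally. Next I would invoke Theorem~\ref{Motivic Galois groups standard properties}(2), which gives the matching isomorphisms
\[
MMT_K(A^s) \cong MMT_K(A), \qquad MST_K(A^s) \cong MST_K(A),
\]
arising from the equivalence of Tannakian categories $\mathcal{M}_K(A^s) \cong \mathcal{M}_K(A)$ induced by $\Delta$ (using $h^1(A^s) \cong h^1(A)^s$ and the K\"unneth formula) together with the natural equivalence of the restricted fiber functors $H_B \mid \mathcal{M}_K(A^s) \cong H_B \mid \mathcal{M}_K(A)$.

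The key observation to check is that these two isomorphisms are compatible, i.e.\ that the containment $G_{l,K}^{alg}(A^s) \subseteq MMT_K(A^s)_{\Q_l}$ of \eqref{GlKalg subset GMKAQl} is carried by the diagonal identifications to the containment $G_{l,K}^{alg}(A) \subseteq MMT_K(A)_{\Q_l}$, and similarly for the ``$1$'' versions via \eqref{GlK1alg subset GMKA1Ql}. This compatibility is immediate because both isomorphisms are induced by the same diagonal map $\Delta$ acting on the tautological representation $V_l(A^s) \cong V_l(A)^s$, so the functoriality of the inclusion of the $l$-adic Zariski closure inside the motivic Galois group makes the relevant square commute. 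Granting this, the hypothesis $G_{l,K}^{alg}(A) = MMT_K(A)_{\Q_l}$ (respectively $G_{l,K,1}^{alg}(A) = MST_K(A)_{\Q_l}$) transports to the corresponding equality for $A^s$.

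I do not anticipate a genuine obstacle here: the whole argument is a direct transfer of the proof of Corollary~\ref{MT true for A implies true for As} to the motivic setting, and the only mildly delicate point is spelling out that the isomorphisms of Theorems~\ref{Zariski closure standard properties} and~\ref{Motivic Galois groups standard properties} are compatible with the natural embedding of $G_{l,K}^{alg}$ into the base change of the motivic Galois group. Once that compatibility is recorded, the corollary follows from Theorems~\ref{Zariski closure standard properties} and~\ref{Motivic Galois groups standard properties} in one line.
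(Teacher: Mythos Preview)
Your proposal is correct and follows exactly the paper's approach: the paper's proof is the single line ``This follows from Theorems~\ref{Zariski closure standard properties} and~\ref{Motivic Galois groups standard properties},'' and you have simply unpacked this reference in more detail, including the compatibility check that the paper leaves implicit.
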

\begin{proof} This follows from Theorems \ref{Zariski closure standard properties}
and \ref{Motivic Galois groups standard properties}. 
\end{proof}


\section{The algebraic Sato-Tate group}

To conclude, we introduce a candidate for the algebraic Sato-Tate group for an arbitrary abelian variety.

\begin{remark}
One observes (cf.\ \cite[p.\ 379]{Se94}) that 
\begin{equation}
MT (A) \subset (G_{\mathcal{M}_{K}(A)})^{\circ}
\label{MT subset of conn comp of MMT}\end{equation}
and it was proven by Deligne that
\begin{equation}
(G_{l, K}^{alg})^{\circ} \subset MT (A)_{\Q_l}.
\label{Deligne Theorem}
\end{equation}
It follows by (\ref{GMKAtau subset DLKAtau}),
(\ref{GMKA1Id normal div in GMKA1}), and (\ref{MT subset of conn comp of MMT}) that
for every abelian variety $A/K$ there is a sequence of natural containments
\begin{equation}
H (A) \subset  (G_{\mathcal{M}_{K}(A), 1})^{\circ} \subset 
G_{\mathcal{M}_{K}(A), 1}^{Id} \subset C_{D} (Sp_{(V, \psi)}). 
\label{HA subset GMKA10 subset CDSp}
\end{equation}
In the same way, it follows by (\ref{GSpId is CDGSp}), 
(\ref{GMKAtau subset of GSptau}), (\ref{GMKAId normal div in GMKA}), and (\ref{MT subset of conn comp of MMT}) that
for every abelian variety $A/K$ there is a sequence of natural containments
\begin{equation}
MT (A) \subset (G_{\mathcal{M}_{K}(A)})^{\circ} \subset G_{\mathcal{M}_{K}(A)}^{Id} \subset C_{D} (GSp_{(V, \psi)}). 
\label{MTA subset GMKA0 subset CDGSp}
\end{equation}
We observe that the equality
\begin{equation}
H (A) = C_D(Sp_{(V, \psi)})
\label{HA = CD Sp for BGK classes}\end{equation}
is equivalent to the following equality:
\begin{equation}
MT (A) = C_D(GSp_{(V, \psi)}).
\label{MTA = CD GSp for BGK classes}\end{equation}
\end{remark}

\begin{remark}
It is conjectured \cite[sec.\ 3.4]{Se94} that $MT (A) = MMT_{K} (A)^{\circ} = (G_{\mathcal{M}_{K}(A)})^{\circ}$, and 
in \cite[p.\ 380]{Se94} J.-P. Serre gave examples where this conjecture holds. 
Theorems~\ref{MTA is GMKA0 is CDGSp} and \ref{MTA is GMKA0 is CDGSp for ab var of dim at most 3}
below gives more evidence for this conjecture.
\end{remark}

\begin{theorem}\label{MTA is GMKA0 is CDGSp}
Let $A/F$ be an absolutely simple abelian variety of dimension $g$
for which the endomorphism algebra $D$ is of type I, II, or III in the Albert classification.
Let $E$ be the center of $D$ and put $e := [E:\, \Q]$, $d^2 = [D:E]$.
Assume that $\frac{g}{de}$ is odd. Then 
\begin{align}
H(A) &= MST_K (A)^{\circ} = (G_{\mathcal{M}_{K}(A), 1})^{\circ} = C_{D} (Sp_{(V, \psi)}), 
\label{equality HA is GMKA10 is CDSp} \\
MT (A) &= MMT_K (A)^{\circ} = (G_{\mathcal{M}_{K}(A)})^{\circ} = C_{D} (GSp_{(V, \psi)}).
\label{equality MTA is GMKA0 is CDGSp}
\end{align}
\end{theorem}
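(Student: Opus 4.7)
The plan is to collapse the two chains of natural containments already established in the excerpt, namely
$$H(A) \subset (G_{\mathcal{M}_{K}(A),1})^{\circ} \subset G_{\mathcal{M}_{K}(A),1}^{Id} \subset C_D(Sp_{(V,\psi)})$$
from (\ref{HA subset GMKA10 subset CDSp}) and the analogous chain
$$MT(A) \subset (G_{\mathcal{M}_{K}(A)})^{\circ} \subset G_{\mathcal{M}_{K}(A)}^{Id} \subset C_D(GSp_{(V,\psi)})$$
from (\ref{MTA subset GMKA0 subset CDGSp}). Since each chain has four terms, it suffices to prove that the leftmost and rightmost terms agree; the two intermediate equalities then follow for free.

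First I would handle the symplectic chain. By Lemma~\ref{BGKCDConn}, under the hypothesis that $D$ is of type I, II, or III, the centralizer $C_D(Sp_{(V,\psi)})$ is connected, so it coincides with the Lefschetz group $L(A) = C_D^{\circ}(Sp_{(V,\psi)})$. Next I would invoke the main result of \cite{BGK1} in the type I and II case, and of \cite{BGK2} in the type III case: the very hypothesis $g/(de)$ odd forces $H(A) = L(A)$. Combining these two inputs gives
$$H(A) = L(A) = C_D(Sp_{(V,\psi)}),$$
and substituting this equality into (\ref{HA subset GMKA10 subset CDSp}) collapses that chain, yielding (\ref{equality HA is GMKA10 is CDSp}) after recalling from Definition~\ref{def of motivic mumford Tate and Sato Tate groups} that $MST_K(A)^{\circ} = (G_{\mathcal{M}_{K}(A),1})^{\circ}$.

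For the $GSp$-chain, the equivalence noted in the remark preceding the theorem — that (\ref{HA = CD Sp for BGK classes}) and (\ref{MTA = CD GSp for BGK classes}) are equivalent — transports the equality just obtained into
$$MT(A) = C_D(GSp_{(V,\psi)}).$$
Substituting this into (\ref{MTA subset GMKA0 subset CDGSp}) collapses the second chain in exactly the same manner, and recalling $MMT_K(A)^{\circ} = (G_{\mathcal{M}_{K}(A)})^{\circ}$ from Definition~\ref{def of motivic mumford Tate and Sato Tate groups} delivers (\ref{equality MTA is GMKA0 is CDGSp}).

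The only genuine content is the equality $H(A) = L(A)$ under the numerical hypothesis $g/(de)$ odd; this is the main obstacle in principle, but it is exactly what is supplied by \cite{BGK1, BGK2} and need not be reproved. Everything else is a formal consequence of the Tannakian containments set up in \S 7 together with the connectedness lemma, so once the \cite{BGK1, BGK2} input is quoted the proof is immediate.
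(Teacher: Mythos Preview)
Your proposal is correct and follows essentially the same approach as the paper: both reduce the theorem to the single equality $H(A) = C_D(Sp_{(V,\psi)})$, then let the containments (\ref{HA subset GMKA10 subset CDSp}) and (\ref{MTA subset GMKA0 subset CDGSp}) together with the equivalence of (\ref{HA = CD Sp for BGK classes}) and (\ref{MTA = CD GSp for BGK classes}) do the rest. The paper's proof is a one-liner citing \cite[Cor.~7.19]{BGK1} and \cite[Cor.~5.19]{BGK2} directly for $H(A) = C_D(Sp_{(V,\psi)})$, whereas you factor this through Lemma~\ref{BGKCDConn} (connectedness of the centralizer) plus $H(A)=L(A)$ from \cite{BGK1,BGK2}; this is just a slightly more explicit unpacking of the same citation.
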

\begin{proof}
By \cite[Cor. 7.19]{BGK1} and \cite[Cor. 5.19]{BGK2}, for abelian varieties satisfying the assumptions of the 
theorem, the equality (\ref{HA = CD Sp for BGK classes}) follows.
\end{proof}

\begin{theorem}\label{MTA is GMKA0 is CDGSp for ab var of dim at most 3}
Let $A/F$ be an abelian variety of dimension at most $3.$ Then the equalities
(\ref{equality HA is GMKA10 is CDSp}) and (\ref{equality MTA is GMKA0 is CDGSp})
hold also for $A.$
\end{theorem}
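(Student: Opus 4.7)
The plan is to reduce this theorem to the already-established Theorem~\ref{Algebraic Sato-Tate for surfaces and threefolds}, which shows that any abelian variety of dimension at most $3$ satisfies all three conditions of Theorem~\ref{conditions for AST}. In particular, one has $H(A) = L(A)$ and the group $DL_{\overline{F}}(A)$ is connected. These are essentially the only ingredients needed.

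First I would unpack what $DL_{\overline{F}}(A)$ is: setting $K = \overline{F}$ makes $L_e = \overline{F}$, so the Galois group $G(L_e / \overline{F})$ is trivial, the disjoint decomposition (\ref{decomposition into twisted Lefschetz for fixed elements}) collapses to a single piece, and one reads off
\begin{equation*}
DL_{\overline{F}}(A) \;=\; DL_{\overline{F}}^{id}(A) \;=\; C_D(Sp_{(V,\psi)}).
\end{equation*}
Combining the connectedness statement of Theorem~\ref{Algebraic Sato-Tate for surfaces and threefolds} with the identity $L(A) = C_D(Sp_{(V,\psi)})^\circ$ from Definition~\ref{Definition Lefschetz} and with $H(A) = L(A)$ gives
\begin{equation*}
H(A) \;=\; L(A) \;=\; C_D(Sp_{(V,\psi)}).
\end{equation*}

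Now I would feed this into the universal chain of inclusions (\ref{HA subset GMKA10 subset CDSp}):
\begin{equation*}
H(A) \;\subseteq\; (G_{\mathcal{M}_K(A),1})^{\circ} \;\subseteq\; G_{\mathcal{M}_K(A),1}^{Id} \;\subseteq\; C_D(Sp_{(V,\psi)}).
\end{equation*}
Since the outermost two groups coincide, all four terms are equal, which is exactly (\ref{equality HA is GMKA10 is CDSp}).

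Finally, for (\ref{equality MTA is GMKA0 is CDGSp}) I would invoke the equivalence, noted in the remark preceding Theorem~\ref{MTA is GMKA0 is CDGSp}, between $H(A) = C_D(Sp_{(V,\psi)})$ and $MT(A) = C_D(GSp_{(V,\psi)})$; this equivalence is purely group-theoretic, coming from the common quotient $\mathbb{G}_m$. Plugging $MT(A) = C_D(GSp_{(V,\psi)})$ into the parallel chain (\ref{MTA subset GMKA0 subset CDGSp}) and applying the same squeeze gives (\ref{equality MTA is GMKA0 is CDGSp}). There is no real obstacle here: all the heavy lifting (verifying $H(A) = L(A)$ and connectedness of $DL_{\overline{F}}(A)$ in dimension at most $3$, which uses results of Moonen--Zarhin, Tankeev, Chi, Imai, and Lemma~\ref{reducible cases}) has already been absorbed into Theorem~\ref{Algebraic Sato-Tate for surfaces and threefolds}, and the present theorem is a direct corollary via the Tannakian chains set up in \S 7--8.
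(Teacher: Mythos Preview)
Your proposal is correct and follows essentially the same approach as the paper: the paper's proof simply cites Theorem~\ref{Algebraic Sato-Tate for surfaces and threefolds} to obtain the equality $H(A) = C_D(Sp_{(V,\psi)})$ (i.e., equality~(\ref{HA = CD Sp for BGK classes})), after which the squeeze via the chains (\ref{HA subset GMKA10 subset CDSp}) and (\ref{MTA subset GMKA0 subset CDGSp}) is implicit from the setup of the preceding Theorem~\ref{MTA is GMKA0 is CDGSp}. You have spelled out explicitly how $H(A)=L(A)$ together with the connectedness of $DL_{\overline{F}}(A)$ yields $H(A)=C_D(Sp_{(V,\psi)})$, and how the two chains then collapse, which is exactly the intended argument.
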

\begin{proof}
For abelian varieties of dimension at most $3$ the equality (\ref{HA = CD Sp for BGK classes}) 
holds by Theorem \ref{Algebraic Sato-Tate for surfaces and threefolds}.
\end{proof}

\begin{remark}
To obtain $G_{l, K}^{alg}$ as an extension of scalars to $\Q_l$ of an expected algebraic Sato-Tate 
group defined over $\Q,$ the assumption in the following definition is natural in view of 
(\ref{MT subset of conn comp of MMT}), (\ref{Deligne Theorem}),  and 
Theorem \ref{equality of conn comp for GM and GM1}.  
\end{remark}

\begin{definition} \label{GMKA1 as AST group}
Assume that $MT (A) = MMT_{K} (A)^{\circ}.$
Then the \emph{algebraic Sato-Tate group} $AST_K (A)$ is defined as follows:
\begin{equation}
AST_K (A) := MST_K (A) = G_{\mathcal{M}_{K}(A), 1}. 
\label{AST group}
\end{equation}
Every maximal compact subgroup of $AST_{K} (A)(\C)$ will be called
a \emph{Sato-Tate group} associated with $A/K$ and denoted $ST_K(A).$
\end{definition}
\medskip

\begin{corollary}\label{ AST true for A implies true for As}
Let $A/K$ be an abelian variety such that $MT (A) = MMT_{K} (A)^{\circ}$ and
the algebraic Sato-Tate conjecture holds for $A.$ Then the algebraic Sato-Tate conjecture 
holds for $A^s$ for every positive integer $s.$
\end{corollary}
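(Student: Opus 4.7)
The plan is to deduce this immediately by combining the compatibility of the motivic Sato--Tate group with the Galois-theoretic Zariski closure under $A \mapsto A^s$, both of which were established earlier in the paper.

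First I would verify that the hypothesis required to \emph{define} $AST_K(A^s)$ via Definition \ref{GMKA1 as AST group} is inherited from $A$. By Theorem \ref{Mumford-Tate group standard properties}(2) we have $MT(A^s) \cong MT(A)$, and by Theorem \ref{Motivic Galois groups standard properties}(2) we have $MMT_K(A^s) = G_{\mathcal{M}_K(A^s)} \cong G_{\mathcal{M}_K(A)} = MMT_K(A)$. Since the equivalence of categories $\mathcal{M}_K(A^s) \cong \mathcal{M}_K(A)$ induced by the diagonal is compatible with the Betti fiber functor, the isomorphism of motivic Mumford--Tate groups identifies connected components, giving $MMT_K(A^s)^\circ \cong MMT_K(A)^\circ$ compatibly with the inclusion $MT(-) \subset MMT_K(-)^\circ$. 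Hence the assumption $MT(A) = MMT_K(A)^\circ$ transports to $MT(A^s) = MMT_K(A^s)^\circ$, so $AST_K(A^s)$ is defined, and $AST_K(A^s) = MST_K(A^s) \cong MST_K(A) = AST_K(A)$.

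Next I would combine this with the parallel identification on the Galois side. Theorem \ref{Zariski closure standard properties}(2) provides an isomorphism $G_{l,K,1}^{alg}(A^s) \cong G_{l,K,1}^{alg}(A)$ induced by the diagonal representation $\Delta \rho_{l,A}$. The algebraic Sato--Tate conjecture for $A$ gives $AST_K(A)_{\Q_l} = G_{l,K,1}^{alg}(A)$. Extending scalars to $\Q_l$ in the motivic isomorphism and chaining with the Galois isomorphism yields
\[
AST_K(A^s)_{\Q_l} \cong AST_K(A)_{\Q_l} = G_{l,K,1}^{alg}(A) \cong G_{l,K,1}^{alg}(A^s),
\]
which is the desired equality \eqref{Algebraic Sato-Tate equality} for $A^s$.

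The only step requiring any care is checking that the abstract isomorphisms produced by Theorems \ref{Zariski closure standard properties} and \ref{Motivic Galois formal groups standard properties} are not merely isomorphisms of abstract group schemes but in fact identify $AST_K(A^s)_{\Q_l}$ with $G_{l,K,1}^{alg}(A^s)$ as subgroups of $Sp_{(V(A^s),\psi_{V(A^s)})}$. This is immediate since both isomorphisms are induced by the same diagonal embedding $V(A) \hookrightarrow V(A)^s \cong V(A^s)$, which respects the symplectic form coming from the Riemann form on $A^s$; so the identification is canonical and the two sides of \eqref{Algebraic Sato-Tate equality} for $A^s$ coincide inside $Sp_{(V(A^s),\psi_{V(A^s)})}(\Q_l)$. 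I expect no real obstacle beyond this bookkeeping, which is entirely parallel to the proof of Corollary \ref{MMT and MST true for A implies true for As}.
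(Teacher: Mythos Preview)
Your proposal is correct and follows essentially the same route as the paper: the paper's proof is the single line ``It follows immediately from Corollary~\ref{MMT and MST true for A implies true for As},'' which in turn rests on Theorems~\ref{Zariski closure standard properties} and~\ref{Motivic Galois groups standard properties}---exactly the ingredients you invoke. Your write-up is more explicit in that you separately verify that the definitional hypothesis $MT(A^s)=MMT_K(A^s)^\circ$ is inherited from $A$ (via Theorems~\ref{Mumford-Tate group standard properties}(2) and~\ref{Motivic Galois groups standard properties}(2)), a point the paper leaves implicit, and you also spell out the compatibility of the diagonal identifications inside the ambient symplectic group; but the underlying argument is the same.
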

\begin{proof} It follows immediately from Corollary 
\ref{MMT and MST true for A implies true for As}.
\end{proof}

We now prove that the group $AST_{K} (A)$ satisfies the expected
properties stated in Remark \ref{expected properties of the alg Sato-Tate group}.

\begin{theorem}\label{AST as expected extension of HA} Assume that 
$MT (A) = MMT_{K} (A)^{\circ}.$
Then the group $AST_{K} (A)$ is reductive and: 
\begin{align}
AST_{K} (A) &\subset DL_{K}(A),
\label{ASTKA subset DLKA} \\
AST_{K} (A)^{0} &= H(A),
\label{(AST)0} \\
\pi_{0} (AST_{K} (A)) &= \pi_{0} ( MMT_K (A)),
\label{pi0 AST} \\
\pi_{0} (AST_{K} (A)) &= \pi_{0} (ST_{K} (A)).
\label{pi0 AST is pi0 ST}
\end{align}
In addition, there are the following exact sequences:
\begin{gather}
0 \rightarrow H(A) \rightarrow AST_{K} (A) \rightarrow \pi_{0} (AST_{K} (A))
\rightarrow 0
\label{AST group as an extension} \\
0 \rightarrow \pi_{0}( G_{\mathcal{M}_{K}(A), 1}^{Id}) \rightarrow \pi_{0}( AST_{K} (A) )
\rightarrow G(L_e / K) \rightarrow 0.
\label{pi 0 AST group as an extension}
\end{gather}
\end{theorem}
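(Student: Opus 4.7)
The plan is to verify the listed assertions in order, drawing on the Tannakian framework built in the previous two sections and then deriving the two exact sequences from short exact sequences already established. Reductivity of $AST_{K}(A) = G_{\mathcal{M}_{K}(A), 1}$ was already noted in Remark \ref{GMKA1 is reductive}, and the inclusion $AST_{K}(A) \subseteq DL_{K}(A)$ is (\ref{GMKA1 subset of DLKA}); once the identification (\ref{(AST)0}) is in hand, the sequence (\ref{AST group as an extension}) is just the tautological short exact sequence attached to an algebraic group and its identity component.

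The core step is (\ref{(AST)0}). Since $(G_{\mathcal{M}_{K}(A)})^{\circ}$ is open in $G_{\mathcal{M}_{K}(A)}$, intersecting with the closed subgroup $Sp_{(V,\psi)}$ gives an open subgroup of $G_{\mathcal{M}_{K}(A), 1}$, so
\[
(G_{\mathcal{M}_{K}(A), 1})^{\circ} = \bigl((G_{\mathcal{M}_{K}(A)})^{\circ} \cap Sp_{(V,\psi)}\bigr)^{\circ} = (MT(A) \cap Sp_{(V,\psi)})^{\circ}
\]
using the standing hypothesis $MT(A) = (G_{\mathcal{M}_{K}(A)})^{\circ}$. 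It remains to see that $MT(A) \cap Sp_{(V,\psi)}$ is connected and equal to $H(A) = DH(A)^{\circ}$. The similitude character $c\colon MT(A) \to \G_{m}$ admits the right inverse $\mu_{\infty, A}$ (since $c \circ \mu_{\infty, A}(z) = z$), hence is a primitive character of the connected reductive group $MT(A)$, so its kernel $MT(A) \cap Sp_{(V,\psi)} = DH(A)$ is connected and agrees with $H(A)$. This same connectedness is precisely the hypothesis of Theorem \ref{equality of conn comp for GM and GM1}, whose conclusion provides the isomorphism (\ref{pi0 AST}).

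For (\ref{pi0 AST is pi0 ST}), I would invoke \cite[Lemma~2.8]{FKRS} exactly as in Proposition \ref{connected components iso}: a maximal compact subgroup of a reductive complex algebraic group meets every connected component, yielding $\pi_{0}(ST_{K}(A)) = \pi_{0}(AST_{K}(A)(\C)) = \pi_{0}(AST_{K}(A))$. For the sequence (\ref{pi 0 AST group as an extension}), I would begin with (\ref{GMKA1 mod GMKA1Id equals DLKA mod DLKAId equals GLeK}) of Corollary \ref{equality of quotients concerning for GM and GM1}, which delivers the short exact sequence
\begin{equation*}
1 \to G_{\mathcal{M}_{K}(A),1}^{Id} \to AST_{K}(A) \to G(L_{e}/K) \to 1.
\end{equation*}
Because $G(L_{e}/K)$ is finite, $AST_{K}(A)^{\circ}$ lies in $G_{\mathcal{M}_{K}(A),1}^{Id}$ and equals its identity component, so passing to $\pi_{0}$ termwise yields the desired exact sequence.

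The one non-formal input I anticipate is the connectedness of $MT(A) \cap Sp_{(V,\psi)}$, which simultaneously unlocks the identification (\ref{(AST)0}) and the applicability of Theorem \ref{equality of conn comp for GM and GM1} needed for (\ref{pi0 AST}); the primitivity argument above, exploiting the section $\mu_{\infty, A}$ of $c$, handles it cleanly. Every remaining step is a matter of combining previously established exact sequences and Tannakian facts from sections 7 and 8.
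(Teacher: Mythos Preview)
Your proposal is correct and follows essentially the same route as the paper's own proof: reductivity via Remark~\ref{GMKA1 is reductive}, the inclusion via (\ref{GMKA1 subset of DLKA}), the identification $AST_K(A)^\circ = H(A)$ via $MT(A)\cap Sp_{(V,\psi)}$, the component-group isomorphism via Theorem~\ref{equality of conn comp for GM and GM1}, and the second exact sequence via Corollary~\ref{equality of quotients concerning for GM and GM1}. The only notable difference is that the paper simply asserts $MT(A)\cap Sp_{(V,\psi)} = H(A)$ as a known fact, whereas you supply an explicit argument (the section $\mu_{\infty,A}$ of the similitude character over $\C$ forces the kernel to be connected); this is a welcome clarification rather than a different strategy.
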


\begin{proof}
The group $AST_{K} (A)$ is reductive by Remark \ref{GMKA1 is reductive}, and  
(\ref{ASTKA subset DLKA}) follows by (\ref{GMKA1 subset of DLKA}). Equality
(\ref{pi0 AST is pi0 ST}) follows since $AST_{K} (A)^{\circ} (\C)$ is a connected 
complex Lie group and any maximal compact subgroup
of a connected complex Lie group is a connected real Lie group. Assuming that 
$MT (A) = (G_{\mathcal{M}_{K}(A)})^{\circ}$, we have
\begin{equation}
G_{\mathcal{M}_{K}(A), 1}^{\circ} = (G_{\mathcal{M}_{K}(A)})^{\circ} \cap Sp_{(V, \psi)} =
MT (A) \cap Sp_{(V, \psi)} = H(A),
\label{GMKA1 is Hodge under natural assumptions}
\end{equation}
so $G_{\mathcal{M}_{K}(A), 1}^{\circ}$ is connected. Hence
\begin{equation}
AST_{K} (A)^{0} = (G_{\mathcal{M}_{K}(A), 1})^{\circ} = G_{\mathcal{M}_{K}(A), 1}^{\circ},
\label{AST GMKAcirc 1 GMKA1 circ}
\end{equation}
so (\ref{(AST)0}) follows.
Hence the equality (\ref{pi0 AST}) and the exact sequence (\ref{AST group as an extension}) may be extracted from the 
top row of the diagram from the 
proof of Theorem \ref{equality of conn comp for GM and GM1}. 
The exactness of the sequence (\ref{pi 0 AST group as an extension}) follows immediately from
Corollary \ref{equality of quotients concerning for GM and GM1}. 
\end{proof}

\begin{remark} The containments (\ref{HA subset GMKA10 subset CDSp}) give some approximation for 
$\pi_{0}(G_{\mathcal{M}_{K}(A), 1}^{Id})$. Moreover, under the assumption 
$MT (A) = (G_{\mathcal{M}_{K}(A)})^{\circ}$ of Definition \ref{GMKA1 as AST group}, we have the following commutative diagrams:

\begin{equation}
\xymatrix{
\,\, 0 \ar[r]^{} \,\, & \,\, H(A) \ar@<0.1ex>[d]^{} \ar[r]^{}   \,\, & 
\,\, AST_{K} (A)  \ar@<0.1ex>[d]^{}  \ar[r]^{} \,\, & \,\, \pi_{0} \, AST_{K} (A)  
\ar@<0.1ex>[d]^{}  \ar[r]^{}   \,\, & \,\, 0\\ 
\,\,  0 \ar[r]^{} \,\, & \,\, L(A)  \ar[r]^{}   \,\, & 
\,\, DL_{K} (A) \ar[r]^{}   \,\, & \,\, \pi_{0} \, DL_{K} (A)  \ar[r]^{}   \,\, & 
\,\, 0\\
}\label{diagram competibility of AST GMKA1 with DLKA 1} 
\end{equation}
\begin{equation}
\xymatrix{
\,\, 0 \ar[r]^{} \,\, & \,\, \pi_{0} \, G_{\mathcal{M}_{K}(A), 1}^{Id} \ar@<0.1ex>[d]^{} \ar[r]^{}   \,\, & 
\,\, \pi_{0} \, AST_{K} (A)  \ar@<0.1ex>[d]^{}  \ar[r]^{} \,\, & \,\, G(L_e / K)  
\ar@<0.1ex>[d]^{=}  \ar[r]^{}   \,\, & \,\, 0\\ 
\,\,  0 \ar[r]^{} \,\, & \,\, \pi_{0} DL_{K}^{Id} (A)  \ar[r]^{}   \,\, & 
\,\, \pi_{0}\, DL_{K} (A) \ar[r]^{}   \,\, & \,\, G(L_e / K)  \ar[r]^{}   \,\, & 
\,\, 0\\
}\label{diagram competibility of AST GMKA1 with DLKA 2}
\end{equation}

\label{approx for pi 0 GMKA1Id}\end{remark}

\begin{corollary}\label{The natural candidate for AST group example 1}
Asume that $H(A) = C_{D} (Sp_{(V, \psi)}).$ Then
\begin{equation}
AST_{K}(A) = DL_{K} (A).
\end{equation}
\end{corollary}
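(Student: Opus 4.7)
The plan is to exploit the two commutative diagrams in Remark~\ref{approx for pi 0 GMKA1Id}, which compare $AST_K(A)$ to $DL_K(A)$ term by term, and then apply the five lemma twice. Note that $AST_K(A)$ is defined via Definition~\ref{GMKA1 as AST group} under the standing assumption $MT(A) = MMT_K(A)^\circ$, so I take this as part of the setup.

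First I would use the hypothesis $H(A) = C_D(Sp_{(V,\psi)})$ to collapse the bottom row of both diagrams. Since $H(A)$ is connected by definition and equals $C_D(Sp_{(V,\psi)})$ by hypothesis, the inclusion $C_D^\circ(Sp_{(V,\psi)}) \subseteq C_D(Sp_{(V,\psi)})$ is an equality, so $L(A) = C_D^\circ(Sp_{(V,\psi)}) = H(A)$ and $DL_K^{id}(A) = C_D(Sp_{(V,\psi)}) = H(A)$ is already connected. Consequently $\pi_0 DL_K^{id}(A) = 0$ and the left vertical map in (\ref{diagram competibility of AST GMKA1 with DLKA 1}) is the equality $H(A) = L(A)$.

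Next I would locate $G_{\mathcal{M}_K(A),1}^{Id}$ inside this same hierarchy. The chain (\ref{HA subset GMKA10 subset CDSp}) sandwiches it between $(G_{\mathcal{M}_K(A),1})^\circ$ and $C_D(Sp_{(V,\psi)})$. By Theorem~\ref{AST as expected extension of HA} the left endpoint is $H(A)$, and by hypothesis the right endpoint also equals $H(A)$; hence $G_{\mathcal{M}_K(A),1}^{Id} = H(A)$ is connected, and $\pi_0 G_{\mathcal{M}_K(A),1}^{Id} = 0$. Feeding this into diagram (\ref{diagram competibility of AST GMKA1 with DLKA 2}), whose right vertical is the identity on $G(L_e/K)$, the five lemma forces the middle vertical $\pi_0 AST_K(A) \to \pi_0 DL_K(A)$ to be an isomorphism. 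A second application of the five lemma to (\ref{diagram competibility of AST GMKA1 with DLKA 1}), whose left vertical is $H(A) = L(A)$ and whose right vertical is the isomorphism just established, then yields $AST_K(A) = DL_K(A)$. There is no real obstacle here: the only step requiring attention is the containment $G_{\mathcal{M}_K(A),1}^{Id} \subseteq C_D(Sp_{(V,\psi)})$ furnished by (\ref{HA subset GMKA10 subset CDSp}); once it is invoked, the argument reduces to routine diagram chasing.
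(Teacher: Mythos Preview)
Your proof is correct and follows essentially the same approach as the paper's own proof: both use the hypothesis together with the chain \eqref{HA subset GMKA10 subset CDSp} to force $\pi_0(G_{\mathcal{M}_K(A),1}^{Id})$ and $\pi_0(DL_K^{Id}(A))$ to be trivial, then read off the isomorphism $\pi_0 AST_K(A)\cong \pi_0 DL_K(A)$ from diagram~\eqref{diagram competibility of AST GMKA1 with DLKA 2}, and finally combine this with $H(A)=L(A)$ in diagram~\eqref{diagram competibility of AST GMKA1 with DLKA 1} to conclude. The only cosmetic difference is that you name the five lemma explicitly, whereas the paper phrases the same diagram chase informally.
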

\begin{proof} It follows by the assumption and 
(\ref{HA subset GMKA10 subset CDSp}) that 
$\pi_{0}( G_{\mathcal{M}_{K}(A), 1}^{Id}) = \pi_{0}(DL_{K}^{Id} (A)) = 1.$
Hence the middle vertical arrow in the diagram 
(\ref{diagram competibility of AST GMKA1 with DLKA 2}), which is the right
vertical arrow in the diagram 
(\ref{diagram competibility of AST GMKA1 with DLKA 1}), is an isomorphism.
Since $L(A) := (C_{D} Sp_{(V, \psi)})^{\circ}$, by our assumption we have
$H(A) = L(A)$. Hence the left 
vertical arrow in the diagram (\ref{diagram competibility of AST GMKA1 with DLKA 1}) is 
an isomorphism, so the middle vertical arrow in the diagram 
(\ref{diagram competibility of AST GMKA1 with DLKA 1}) is an isomorphism. 
\end{proof}

\begin{corollary} \label{cor alg Sato-Tate for MT explained by endo}
If $H(A) = C_{D} (Sp_{(V, \psi)})$ and the Mumford-Tate conjecture holds for $A$
then the algebraic Sato-Tate conjecture holds for
$AST_{K}(A) := G_{\mathcal{M}_{K}(A), 1},$ namely
\begin{equation}
G_{l, K, 1}^{alg}  = AST_{K}(A)_{\Q_l}.
\nonumber
\end{equation}
\end{corollary}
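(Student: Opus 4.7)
The plan is to reduce this to an application of Theorem \ref{conditions for AST} combined with the identification $AST_K(A) = DL_K(A)$ proved in Corollary \ref{The natural candidate for AST group example 1}. All the work has effectively been done upstream; the task is to check that the various hypotheses slot together.

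First I would verify the three hypotheses of Theorem \ref{conditions for AST}. Condition 2 is the Mumford-Tate conjecture, which is assumed. For condition 1, since $L(A) = C_D^\circ(Sp_{(V,\psi)})$ by Definition \ref{Definition Lefschetz} and $H(A)$ is connected with $H(A) = C_D(Sp_{(V,\psi)})$ by hypothesis, the group $C_D(Sp_{(V,\psi)})$ must itself be connected, whence $H(A) = L(A) = C_D(Sp_{(V,\psi)})$. For condition 3, observe that $DL_{\overline{F}}(A) = DL_{\overline{F}}^{id}(A) = C_D(Sp_{(V,\psi)})$, which we have just seen is connected. Theorem \ref{conditions for AST} then yields (\ref{ItIsSatoTateGroupIdentification}), namely $G_{l,K,1}^{alg} = DL_K(A)_{\Q_l}$ for every $l$.

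Second, I would verify the hypothesis $MT(A) = MMT_K(A)^\circ$ needed to invoke Corollary \ref{The natural candidate for AST group example 1}. The chain of inclusions (\ref{MTA subset GMKA0 subset CDGSp}) reads
\[
MT(A) \subset (G_{\mathcal{M}_K(A)})^\circ \subset G_{\mathcal{M}_K(A)}^{Id} \subset C_D(GSp_{(V,\psi)}).
\]
Since the equality $H(A) = C_D(Sp_{(V,\psi)})$ is equivalent to $MT(A) = C_D(GSp_{(V,\psi)})$ (see the remark containing (\ref{HA = CD Sp for BGK classes})--(\ref{MTA = CD GSp for BGK classes})), the outer terms coincide, so all four groups are equal. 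In particular $MT(A) = MMT_K(A)^\circ$, and Corollary \ref{The natural candidate for AST group example 1} applies to give $AST_K(A) = DL_K(A)$.

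Combining the two conclusions yields $G_{l,K,1}^{alg} = DL_K(A)_{\Q_l} = AST_K(A)_{\Q_l}$, as desired. There is no genuine obstacle here: the only thing to be careful about is to confirm that the endomorphism-explained hypothesis $H(A) = C_D(Sp_{(V,\psi)})$ automatically supplies both the connectedness input needed for Theorem \ref{conditions for AST}(3) and the motivic hypothesis $MT(A) = MMT_K(A)^\circ$ needed for the definition of $AST_K(A)$; once these are in place the corollary is a direct assembly of prior results.
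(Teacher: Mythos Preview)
Your proof is correct and follows essentially the same path as the paper's: both arguments identify $AST_K(A)$ with $DL_K(A)$ via Corollary~\ref{The natural candidate for AST group example 1} and then establish $G_{l,K,1}^{alg} = DL_K(A)_{\Q_l}$. The only structural difference is that you verify the three hypotheses of Theorem~\ref{conditions for AST} and cite it, whereas the paper argues the same implication directly (using the equivalence of (\ref{ItIsSatoTateGroupIdentification Le}) and (\ref{ItIsSatoTateGroupIdentification}) together with Remark~\ref{Mumford-Tate Conj and Deligne Theorem}); your write-up is also slightly more careful in explicitly checking the hypothesis $MT(A) = MMT_K(A)^{\circ}$ underlying Definition~\ref{GMKA1 as AST group} via the chain (\ref{MTA subset GMKA0 subset CDGSp}), which the paper leaves implicit.
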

\begin{proof} 
By (\ref{GlK1alg subset GMKA1Ql}) and Corollary \ref{The natural candidate for AST group example 1},
we get 
\begin{equation}
G_{l, K, 1}^{alg} \subset AST_{K}(A)_{\Q_l} = DL_{K} (A)_{\Q_l}.
\nonumber\end{equation}
By the assumption $H(A) = DL_{L_e} (A)$, the equivalence of (\ref{ItIsSatoTateGroupIdentification Le}) and
(\ref{ItIsSatoTateGroupIdentification}) show that we need to prove that
$(G_{l, K, 1}^{alg})^{\circ} = H(A)_{\Q_l}$. However, this is equivalent to the 
Mumford-Tate conjecture by Remark \ref{Mumford-Tate Conj and Deligne Theorem}. 
\end{proof}

\begin{remark} 
The results of this and the previous chapter, in particular Theorem 
\ref{AST as expected extension of HA} and Corollaries \ref{The natural candidate for AST group example 1} and
\ref{cor alg Sato-Tate for MT explained by endo}, show that
the group $AST_{K} (A)$ from Definition \ref{GMKA1 as AST group} is a natural candidate for the 
algebraic Sato-Tate group for the abelian variety $A$. 
\end{remark}

\begin{remark}
For absolutely simple $A/F$ for which the endomorphism algebra $D$ is of type I, II, or III 
in the Albert classification with $\frac{g}{de}$ odd and for $A/F$ of dimension at most 3 we 
explicitely determined the motivic Mumford-Tate and motivic Sato-Tate groups (see 
(\ref{pi0 AST}) of Theorem \ref{AST as expected extension of HA}, Corollary
\ref{The natural candidate for AST group example 1}, Theorems~\ref{MTA is GMKA0 is CDGSp} and \ref{MTA is GMKA0 is CDGSp for ab var of dim at most 3}). 
\label{explicit deter. of mot Mumford-Tate and Sato-Tate groups for classes of ab. var.}
\end{remark} 



\end{document}